\newcommand{\Z}{{\mathbb Z}}
\newcommand{\N}{{\mathbb N}}
\newcommand{\R}{{\mathbb R}}
\newcommand{\ep}{{\varepsilon}}
\newcommand{\Vol}{\mathrm{Vol}}
\newcommand{\dist}{\mathrm{dist}}
\newcommand{\grad}{\nabla}
\newcommand{\diam}{\mathrm{diam}}
\newcommand{\vw}{\mathbf{w}}
\newcommand{\abs}[1]{\left\vert #1 \right\vert}
\newcommand{\parens}[1]{\left( #1 \right)}
\theoremstyle{plain}
\newtheorem{theorem}{Theorem}[section]
\newtheorem{corollary}[theorem]{Corollary}
\newtheorem{proposition}[theorem]{Proposition}
\newtheorem{lemma}[theorem]{Lemma}
\newtheorem{lem}[theorem]{Lemma}
\newtheorem{conjecture}{Conjecture}[section]
\newtheorem{remark}[theorem]{Remark}
\theoremstyle{definition}
\newtheorem{example}{Example}
\theoremstyle{definition}
\newtheorem{definition}{Definition}[section]
\definecolor{red}{rgb}{.8,0,0}
\definecolor{green}{rgb}{0,.7,0}
\definecolor{blue}{rgb}{0,0,.8}
\title{Volume growth, curvature, and Buser-type inequalities in graphs}
\author{Brian Benson}
\address[Benson]{Department of Mathematics, University of California, Riverside, CA}
\email[Corresponding Author]{bbenson@ucr.edu}
\author{Peter Ralli}
\address[Ralli]{Program in Applied and Computational Mathematics, Princeton University, Princeton, NJ}
\email{pralli@princeton.edu}
\author{Prasad Tetali}
\address[Tetali]{School of Mathematics, Georgia Institute of Technology, Atlanta, GA}
\email{tetali@math.gatech.edu}
\date{}
\begin{document}
\maketitle

\begin{abstract}
We study the volume growth of metric balls as a function of the radius in discrete spaces, and focus on the relationship between volume growth and discrete curvature. We improve volume growth bounds under a lower bound on the so-called Ollivier curvature, and discuss similar results under other types of discrete Ricci curvature.

Following recent work in the continuous setting of Riemannian manifolds (by the first author), we then bound the eigenvalues of the Laplacian of a graph under bounds on the volume growth. In particular, $\lambda_2$ of the graph can be bounded using a weighted discrete Hardy inequality and the higher eigenvalues of the graph can be bounded by the eigenvalues of a tridiagonal matrix times a multiplicative factor, both of which only depend on the volume growth of the graph. As a direct application, we relate the eigenvalues to the Cheeger isoperimetric constant. Using these methods, we describe classes of graphs for which the Cheeger inequality is tight on the second eigenvalue. We also describe a method for proving Buser's Inequality in graphs, particularly under a lower bound assumption on curvature.
\end{abstract}

\section{Introduction}
\subsection{History and Motivation}
In Riemannian geometry there is a large and celebrated body of literature relating the Ricci curvature to various properties of the manifold, such as the Laplacian operator, the volume, the diameter, and various isoperimetric properties \cite{CY75,Buser82,Bishop}.  There has been much work in graphs and Markov chains studying the analogues of concepts that arise in Riemannian geometry, for example the Laplacian, isoperimetric constant and Cheeger inequalities \cite{Alon86,AC,Ch97}.  These successes have motivated the problem of defining the discrete Ricci curvature.  There have so far been several proposed definitions of discrete Ricci curvature \cite{S06,LV09,OV00,BE85,Oll07,EM12,CY16,Bauer2013,BS09}.  It is generally unclear whether or not any of these notions of curvature are equivalent, and in some instances examples illustrate that they are not equivalent.

It is preferable that a notion of discrete Ricci curvature would allow for similar results to those that hold for manifolds, such as relating global isoperimetric properties to the discrete curvature.  We should also hope that it is relatively easy to compute the discrete curvature.  In Riemannian geometry there are many results under the hypothesis of positive (or non-negative) curvature; if we can find similar results for graphs, we would like there to be large classes of interesting graphs that \emph{have} positive (or non-negative) curvature, and be able to make use of it in refining or strengthening various geometric and functional inequalities. 

As mentioned above, there have been many distinct definitions of the discrete Ricci curvature, each developed by taking a well-understood property of Ricci curvature in Riemannian manifolds and adapting it to the setting of graphs and Markov chains. In this work we will mainly focus on the Ollivier curvature, which is defined by the solutions to minimum transport problems between balls of small radius. The so-called Ollivier curvature was defined and developed significantly by Ollivier (although it was introduced earlier, independently by Sammer) \cite{Oll07,Sam05}.

To motivate this definition, we first briefly discuss the relationship between optimal transport and curvature in manifolds. Let $M$ be a Riemannian manifold with points $x,y$ which are close enough to be connected via a unique distance minimizing geodesic $\gamma$ and let $v$ be a direction at $x.$  We denote by the direction $w$ at $y,$ the parallel transport of $v$ along $\gamma$ to the point $y$ using the manifold's connection. Now consider $B(x,r)$ and $B(y,r),$ the metric ball of small radius $r>0$ centered at $x$ and $y$ respectively. We can move $B(x,r)$ along a small distance $\alpha>0$ in the direction $v$ by moving each $z \in B(x,r)$ in the following way: transport $v$ from $x$ to $z$ along the distance minimizing geodesic from $x$ to $z,$ call this direction $v_z.$ Then move a distance $\alpha$ from $z$ in the direction $v_z,$ corresponding to a point $z'$ in the manifold. We can use the same procedure with the vector $w$ at $y$ to move each point in $B(y,r)$ distance $\alpha$ in the direction of $w.$ If the Ricci curvature is positive, then the average of the distances between points in $B(x,r)$ and $B(y,r)$ will be further than their counterparts under the parallel transport of these metric balls. One the other hand, if the curvature is negative, on average, the distances between points in $B(x,r)$ and $B(y,r)$ will be closer than their counterparts under the parallel transport. Ollivier observed that the average distance can be replaced by the $L_1$-Wasserstein distance between uniform distributions on $B(x,r)$ and $B(y,r)$, and this metric is used in definition of the so-called Ollivier curvature, which can be used to recover the manifold's Ricci curvature (up to a factor) \cite{Oll07}.

Ollivier used this concept to help define the discrete Ricci curvature \cite{Oll07}. The metric balls $B(x,r)$ and $B(y,r)$ can also be defined on a graph where $r$ is a non-negative integer and $x$ and $y$ are vertices of the graph. Then the $L_1$-Wasserstein distance between the balls $B(x,r)$ and $B(y,r)$ determines a notion of curvature on the graph. 

While definitions of Ollivier curvature can be applied to any metric measure space, arguably its most fruitful use has been to define curvature in graphs with the graph distance and counting measure, for example \cite{BaJoLi11,BCLMP17,JoLi11}. That will also be our focus in this work:
A well-known fact due to Bishop is that a Riemannian manifold with a lower bound on its Ricci curvature will have the volume growth of its metric balls controlled by this lower bound \cite{Bishop}. Under many notions of discrete curvature it is unclear whether such a volume growth bound exists.  In this work we will present a volume growth that is interesting for regular graphs with a negative lower bound on Ollivier curvature.

We will also briefly discuss the $CDE'$ curvature, which was created by Bauer, Jost, and Liu \cite{Bauer2013}.  The $CDE'$ inequality is a modification of the $CD$ inequality of Bakry-\'{E}mery, which is a discrete generalization of the Bochner formula from Riemannian geometry.  Those authors demonstrated a version of the Li-Yau gradient estimate for graphs under the $CDE'$ curvature.  This is a result that does not have any known analogue in the setting of Ollivier curvature.

Volume growth estimates for Riemannian manifolds can also be applied to study the eigenvalues of the Laplace-Beltrami operator, denoted $\Delta_g,$ on the manifold. In fact, the relationship between the dimension, Ricci curvature, Cheeger constant, and spectrum of the Laplace-Beltrami operator on a closed Riemannian $n$-manifold has been well established. To remain consistent with the notation of the Laplace eigenvalues on graphs, denote by $\lambda_2(M)$ the first nonzero eigenvalue of $\Delta_g$ on $M.$\footnote{In geometry, the convention is to index the least positive eigenvalue by 1. However, we adopt the convention used in graph theory throughout.} Cheeger first showed that $\lambda_2(M) \geq h^2(M)/4,$ independent of the curvature or volume growth of the manifold \cite{Ch70}. Buser then proved that if the Ricci curvature of $M$ is bounded below by $-(n-1)\delta^2$ with $\delta \geq 0,$ then 
	\begin{equation*}
		\lambda_2(M) \leq 2\delta(n-1)h(M)+10h^2(M).
	\end{equation*}
Buser's original proof of this inequality used work relating the volume growth to the lower bound on the Ricci curvature due to Bishop \cite{Bishop} and Heintze and Karcher \cite{HK}. More recently, Agol proved a quantitative improvement of the estimate \cite{Agol}. Soon after, the first author proved an analogue giving upper bounds on every eigenvalue of $\Delta_g$ using only the dimension, a lower bound on Ricci curvature, and the Cheeger constant; the same quantities used in Buser's original inequality \cite{B15}. In each of these results, the lower bound on the Ricci curvature is necessary to control the volume growth of the level sets of the distance functions from the optimal Cheeger splitting. Further details are discussed in greater detail in Section \ref{sect:vg_lambda}. It should also be noted here that in \cite{Ledoux94}, Ledoux provided a simpler analytic proof of Buser's original result, and also followed up with a remarkable {\em dimension-free} improvement (see Theorem~5.2 in \cite{Ledoux04spectralgap}), where the constants are independent of the dimension of the manifold.

A problem of particular interest for graphs is the relationship between the isoperimetric constants and the spectral gap ($\lambda_2$) of the Laplacian of the graph. The Cheeger and Buser inequalities have analogues for graphs. Such relationships are frequently referred to in the literature as Cheeger-type inequalities, and relates the algebraic and geometric expansion properties of the graph. For the isoperimetric constant $h_{out}$, defined using the outer vertex boundary (also known as {\it vertex expansion}, and reviewed in the next section), the Cheeger inequalities \cite{BHT} are

\begin{align}\frac{\left (\sqrt{1+h_{out}}-1\right )^2}{2d}\leq \lambda_2 \leq h_{out}\end{align} for any $d$-regular graph.
A long-standing problem of general interest is to determine the class of graphs for which the lower inequality $\lambda_2 \approx h_{out}^2$ is tight.

There is a previous proof of a discrete Buser's inequality, which states that under the condition of non-negative Ricci curvature (in the sense of the $CD$ inequality of Bakry-\'{E}mery), the lower Cheeger inequality is tight \cite{KKRT15}.  The proof method relies on decomposing a candidate Cheeger-optimizing vertex set as a linear sum of eigenfunctions of the Laplacian, and analyzing the behavior of those functions under the heat flow operator $P_t$, which can be seen as the evolution of the random walk on the graph.  This proof was recently extended to bound the higher eigenvalues of the Laplacian \cite{LMP}.

\subsection{Summary of Results}
We prove specific results bounding the spectrum using only volume growth. To summarize, let $A$ be a subset of the vertex set of a graph $G.$ In Theorem \ref{theo:eigen2}, we prove that $\lambda_2(G)$ can be bounded from above by a weighted discrete Hardy inequality which depends only on bounds on the volume growth of $A.$ Such Hardy inequalities are well understood and we combine our work with results of Miclo \cite{Miclo} to give quantitative estimates on the first eigenvalues in terms of volume growth and $h_{out}(G),$ which are stated in Theorem \ref{thm:lambda_2}. We also prove in Theorem \ref{theo:eigenhigher} that higher eigenvalues $\lambda_k(G)$ where $k\geq 2$ can be bounded above by the eigenvalues of matrices which depends only on volume growth bounds. 

As an application of the relationship between the spectrum and volume growth, we suggest an alternate proof method of Buser's inequality on graphs, which instead uses a bound on volume growth around a set achieving the optimal Cheeger constant. Such approach is inspired by the original proof of Buser \cite{Buser82}, in the continuous setting of manifolds, as well as subsequent improvements by Agol \cite{Agol} and the first author \cite{B15}. In particular, we can extend the proof of our Buser-type inequality on graphs to bound the higher eigenvalues of the Laplacian.  A similar result was demonstrated for manifolds in previous work of the first author.

It is interesting to note that a bound on discrete curvature is only used in our methods to find a suitable volume growth function.  If a bound on volume growth for a specific graph (or a family of graphs) exists under some other condition unrelated to curvature, our theorems immediately admit upper bounds on eigenvalues. In particular, we prove that any graph whose ``shells'' -- sets of vertices a fixed distance from a (Cheeger-optimal) isoperimetric cut-set -- have volume bounded from above by the volume of the cut-set satisfies 
\begin{align*}\lambda_2 \leq \frac{27}{2} h_{out}^2.\end{align*}
Therefore, the lower Cheeger inequality is tight up to a multiplicative factor $c = c(d)$ depending only on degree $d$ of a $d$-regular graph. This result appears in Example \ref{ex:constant}. In Example \ref{ex:HigherEigen}, we show that when the volume growth is bounded by a constant, that higher eigenvalues can be bounded by higher Cheeger constants. Specifically, the higher Cheeger constant $h_{out}(n)$ (arising from splitting the graph into $n$ subgraphs). Specifically, under the same aforementioned volume growth assumptions, we have, for any positive integers $n$ and $k$,
	\begin{equation*}
    \lambda_k \leq k^2\left (\frac{27\pi^2}{16}+o(1)\right )h_{out}(n)^2.
	\end{equation*}

\subsection*{Acknowledgments} 
 The authors are indebted to the anonymous referees for a very careful reading of the original manuscript. Their many corrections and suggestions have helped remove errors and improved the presentation of the results. The first author would like to thank the School of Mathematics at the Georgia Institute of Technology for their support and hospitality during several visits to work on this project. The second and third authors were supported in part by the NSF grants DMS-1407657 and DMS-1811935.  The second author was also supported in part by the AMO grant W911NF-14-1-0094.  A portion of the research was conducted while the first author was a visiting assistant professor at Kansas State University and the second was a graduate student at Georgia Institute of Technology.

\section{Notation}

A graph $G = (V,E)$ has a vertex set $V$ and an edge set $E$ that contains $2$-element subsets of $V$.  A finite graph is one where $V$ is a finite set. 
If $\{x,y\}\in E$, we say that $x$ and $y$ are neighbors, denoted $x\sim y$.  A common shorthand is that the edge $\{x,y\}$ may be denoted $xy$.  The degree of a vertex $x$ is the number of neighbors of $x$.  A locally finite graph is one where each vertex has a finite set of neighbors.  For some integer $d>0$, a $d$-regular graph is one where each vertex has exactly $d$ neighbors.  Clearly such a graph is also locally finite.

A walk on $G$ is a series of vertices $v_0,v_1,\dots, v_n$ so that $v_{i-1}v_i$ is an edge for all $i = 1,\dots n$.  A graph is connected if every pair of vertices comprises the two ends of some walk.  For the rest of this work, we will only consider connected graphs.

Let $G$ be a $d$-regular graph.  The adjacency operator $A$ on the space $\{f:V\to\R\}$ is defined by the equation \begin{align*}Af(x) = \tfrac{1}{d}\sum_{y:x\sim y}f(y),\end{align*} and the Laplacian operator $\Delta$ on the same space is \begin{align*}\Delta f(x) = \tfrac{1}{d}\sum_{y:y\sim x}f(x) -f(y).\end{align*}  In other words, one has $\Delta = I- A,$ where $I$ is the identity operator satisfying $If =f.$ (In other parts of the literature, these operators are sometimes referred to as the \emph{normalized} adjacency operator and \emph{normalized} Laplacian.)

Observe that for a finite graph $\Delta$ is a symmetric and positive semi-definite matrix; as such, the eigenvalues of $\Delta$ are all real and non-negative. By convention we write the eigenvalues of $\Delta$ (counting multiplicities) as $\lambda_1(\Delta),\lambda_2(\Delta),\dots$ with $\lambda_1(\Delta)\leq \lambda_2(\Delta) \leq \dots$.  It is well-known that $\lambda_1(\Delta)$ is achieved by the eigenfunction $f\equiv 1$ with $\lambda_1 = 0$.  The spectral gap of $G$ is the difference between the two least eigenvalues of $\Delta,$ which is $\lambda_2(\Delta)$ since $\lambda_1(\Delta)=0.$  Often we write these values as $\lambda_1(G),\lambda_2(G),\dots,$ even suppressing the graph $G$ when clear.

Let $G$ be a $d$-regular, finite graph.  
 For a vertex subset $A\subset V$, define the edge boundary $\partial A$ to be $\set{\{x,y\}\in E: x\in A; y\notin A}.$
The (Cheeger) edge isoperimetric constant is defined as
$ h(G) = \min_{A}\frac{|\partial A|}{d|A|}\,,$ where the minimization is over all sets $A$ with $0<|A|\leq \tfrac{|V|}{2}$.

Cheeger-type inequalities 
relate edge and vertex isoperimetric constants to the spectral gap of the Laplacian of the graph. 
In particular, classical results (e.g., \cite{AM, AC,Tan84}, to cite just a few) show that $\frac{h^2}{2}\leq \lambda_2 \leq 2h\,.$

\smallskip
In addition to the edge boundary of a set $A\subset V$, one can define two different vertex boundaries:
The inner vertex boundary is $\partial_{in}A = \{x\in A:\exists y\sim x; y\notin A\}$, and the
outer vertex boundary is $\partial_{out}A = \{y\notin A:\exists x\sim y; x\in A\}$.

Following \cite{BHT}, one has the (Cheeger) vertex isoperimetric constants using the vertex boundaries:
\[
h_{in}(G) = \min \frac{|\partial_{in}A|}{|A|}\ \  \mbox{ and } \ \ 
h_{out}(G) = \min \frac{|\partial_{out}A|}{|A|}\,.
\]
In all cases, the minimization is over non-empty vertex sets with $|A|\leq \tfrac{1}{2}|V(G)|$.  
Observe the trivial bounds $h(G)\leq h_{in}(G)\leq d\cdot h(G)$ and $h(G)\leq h_{out}(G)\leq d\cdot h(G),$ so bounds on the vertex constants imply bounds on the edge constants and vice versa.

There are also a pair of Cheeger-type inequalities for each of these isoperimetric constants \cite{BHT,Alon86}; in particular, 
for the outer vertex boundary, the inequalities are:
$$\frac{\left (\sqrt{1+h_{out}}-1\right )^2}{2d^2}\leq \lambda_2 \leq \frac{h_{out}}{d}\,,$$
where the additional factors of $d$ in the denominators as compared to the edge Cheeger inequalities arise from the need to normalize $h_{out}$.

We now define the Ollivier curvature, which relies on concepts of optimal or minimum transport. Let $X$ be a measurable metric space with metric $\dist(\cdot,\cdot)$, and let $\mu,\nu$ be two probability measures on $X$.  
The $L_1$ Wasserstein (also known as minimum-transport or earth-mover) distance \cite{AGS11a} is 
\begin{align*}W_1(\mu,\nu) = \inf_m \int_{X\times X} \dist(x,y)\, m(x,y),
\end{align*} 
where the minimum is taken over all joint distributions (couplings) $m$ on $X\times X$ 
with left marginal $\mu$ and right marginal $\nu$. 
Qualitatively, we wish to transport the distribution $\mu$ to $\nu$.  
Here $m$ is a movement plan that moves probability mass $m(x,y)$ from $x$ to $y$, and we choose $m$ to minimize the average distance moved by the mass.

There is a well-known dual to the minimization problem \cite{AG11}:
\begin{align} 
W_1(\mu,\nu) = \sup_{f\in \text{Lip}(1)} \int_X f \nu - \int_X f\mu,\end{align}  
where $\text{Lip}(1)$ is the space of functions with Lipschitz constant equal to one. A maximizing function for this equation is sometimes known as a \emph{Kantorovich potential}.

Observe that if the probability measures $\mu_x$ and $\mu_y$ (on $X$) have finite support, both the primal and dual characterizations of $W_1(\mu_x,\mu_y)$ are linear programs on a finite set of variables.  All the probability distributions we will consider in our discussion of Ollivier curvature will be of this type.  For these distributions we will use the notation of finite sums indexed by vertices rather than integrals over the measure space of vertices.

Let $G$ be a locally finite connected graph and $x\in V(G)$ a vertex with degree $d_x$.  Define a probability measure $\mu_x$ on $V$ so that \begin{align*}\mu_x(v) = \begin{cases} 
      \tfrac{1}{2} & \text{if }v = x \\
      \frac{1}{2d_x} & \text{if }v \sim x \\
      0 & \text{otherwise.}
   \end{cases}\end{align*}  
Here, think of taking one step of a random walk starting at $x$ and with laziness parameter $1/2$.

\begin{definition} If $x,y\in V$, the Ollivier curvature with is \begin{align}\kappa(x,y) = 1-\frac{W_1(\mu_x,\mu_y)}{d(x,y)}.\end{align}
\end{definition}

The choice of laziness parameter is to some extent not important: suppose we vary the value $p = \mu_x(x) = \mu_y(y)$.  When $p\geq \max \parens{\frac{1}{d_x+1}, \frac{1}{d_y+1}},$ the optimal transport plans and the value $\kappa(x,y)$ vary linearly with $1-p$ \cite{BCLMP17}.

For later sections, we need some basic and well-known facts about Ollivier curvature which we now briefly review.
\begin{theorem}[Neighbors minimizing curvature (Y. Ollivier, \cite{Oll07})]\label{thm:oll_minim} Suppose that $\kappa(u,v)\geq k$ whenever $u,v\in V$ are neighboring vertices.  Then, also for any $x,y\in V$ (not necessarily neighbors), we have $\kappa(x,y)\geq k.$\end{theorem}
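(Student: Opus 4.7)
The plan is to reduce the claim to the neighbor case via the triangle inequality for the Wasserstein distance, chained along a shortest path in $G$.

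First I would fix $x,y\in V$ and let $n = d(x,y)$. Let $x = x_0, x_1, \dots, x_n = y$ be a geodesic, so that each consecutive pair $x_{i-1}, x_i$ is an edge. The key ingredient is that $W_1$ is a genuine metric on the space of probability measures on $V$; in particular it satisfies the triangle inequality
\begin{equation*}
W_1(\mu_x, \mu_y) \;\leq\; \sum_{i=1}^n W_1(\mu_{x_{i-1}}, \mu_{x_i}).
\end{equation*}
I would derive this either by directly gluing the optimal couplings for the consecutive pairs (using the gluing lemma for couplings with matching marginals) or by invoking it as a standard property of $W_1$ on a metric measure space, which follows from the dual Kantorovich formulation already recalled in the text.

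Next, each term on the right-hand side is controlled by the neighbor hypothesis: since $x_{i-1} \sim x_i$ we have $d(x_{i-1}, x_i) = 1$ and $\kappa(x_{i-1}, x_i) \geq k$, so by the definition of $\kappa$,
\begin{equation*}
W_1(\mu_{x_{i-1}}, \mu_{x_i}) \;=\; \bigl(1 - \kappa(x_{i-1}, x_i)\bigr)\, d(x_{i-1}, x_i) \;\leq\; 1 - k.
\end{equation*}
Summing over $i$ yields $W_1(\mu_x, \mu_y) \leq n(1-k) = (1-k)\, d(x,y)$, and dividing by $d(x,y)$ gives $\kappa(x,y) \geq k$, as desired.

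There is no real obstacle beyond correctly invoking the triangle inequality for $W_1$. The one thing to double-check is that the triangle inequality is stated in the form needed; if one prefers to avoid appealing to it as a black box, then the main (and still routine) technical step is the gluing construction: given optimal couplings $m_i$ between $\mu_{x_{i-1}}$ and $\mu_{x_i}$, build a joint coupling on $V^{n+1}$ whose first and last marginals are $\mu_x$ and $\mu_y$, and apply the triangle inequality of the graph metric $d$ pointwise inside the integral.
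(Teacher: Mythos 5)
Your argument is the same as the paper's: chain the triangle inequality for the $W_1$ metric along a geodesic from $x$ to $y$, bound each consecutive term by $1-k$ using the neighbor hypothesis, and divide by $d(x,y)$. Your extra remark about how to justify the triangle inequality (gluing couplings or appealing to $W_1$ being a metric) is fine but not needed beyond what the paper already uses.
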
  
In other words, it is equivalent to say that $k$ is a global lower bound on curvature and that $k$ is a lower bound on the curvature between each pair of neighbors. We give a quick proof due to Ollivier \cite{Oll07}. \begin{proof}Observe that if $u\sim v$, then $W_1(\mu_u,\mu_v) = 1 - \kappa(u,v) \leq 1 - k.$

Let $x = x_0, x_1, \ldots, x_l = y$ be a geodesic path in $G$.  Because $W_1$ is a metric, it follows that 		$$W_1(\mu_x,\mu_y) \leq \sum_{i=1}^l W_i(\mu_{x_{i-1}},\mu_{x_i}) \leq (1-k)d(x,y),$$
and $\kappa(x,y) \geq 1 - \frac{(1-k)d(x,y)}{d(x,y)} = k.$
\end{proof}

Ollivier also provided a result for estimating curvature on product graphs. Later, we will use the following result to apply our techniques to the discrete hypercube.  In our notation, the graph product $G\square H$ has vertex set $V(G)\times V(H)$, and edges $(x,y)\sim (w,z)$ if $(x,y)$ and $(w,z)$ are adjacent in one component and identical in other other.
\begin{theorem}[Ollivier curvature tensorization (Y. Ollivier, \cite{Oll07})]\label{thm:oll_tens}
Let $G$ be a $d$-regular graph, and denote 
$G\square G\square \cdots \square G$ 
with $r$ terms in the product by $G^r$.  Suppose that for every $x,y\in V(G)$, it holds that $\kappa(x,y)\geq k.$  Then for every $x',y'\in V(G^r),$ we have that $\kappa(x',y')\geq \frac{k}{r}.$
\end{theorem}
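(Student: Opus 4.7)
The plan is to reduce first to the case of neighboring vertices: by Theorem~\ref{thm:oll_minim} applied inside $G^r$, it suffices to show $\kappa(x', y') \geq k/r$ whenever $x', y' \in V(G^r)$ are adjacent. Such a pair differs in exactly one coordinate, say the $i$-th, with $x'_i \sim y'_i$ in $G$; all other coordinates coincide.

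Next I exploit the product structure by decomposing the one-step measure at each vertex according to the choice of coordinate. Since $G^r$ is $dr$-regular, let $\nu_{x',j}$ denote the probability measure putting mass $1/2$ on $x'$ and mass $1/(2d)$ on each of the $d$ neighbors of $x'$ obtained by changing only the $j$-th coordinate. A direct check on atoms gives
\[
\mu_{x'} \;=\; \tfrac{1}{r}\sum_{j=1}^r \nu_{x',j},
\]
and analogously for $\mu_{y'}$. Assembling coordinate-wise couplings $\pi_j$ of $(\nu_{x',j},\nu_{y',j})$ into $\pi = \tfrac{1}{r}\sum_j \pi_j$ produces a valid coupling of $(\mu_{x'},\mu_{y'})$, yielding the convexity bound
\[
W_1(\mu_{x'}, \mu_{y'}) \;\leq\; \tfrac{1}{r}\sum_{j=1}^r W_1(\nu_{x',j}, \nu_{y',j}).
\]

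Finally I estimate each term separately. For $j \neq i$, the supports of $\nu_{x',j}$ and $\nu_{y',j}$ are in natural bijection via the map that flips the $i$-th coordinate from $x'_i$ to $y'_i$, and each such paired vertices lies at distance exactly $1$ in $G^r$; the diagonal coupling therefore gives $W_1(\nu_{x',j}, \nu_{y',j}) \leq 1$. For $j = i$, both measures are supported on the embedded copy of $G$ at coordinate $i$ (with the other coordinates fixed to their common values) and there they coincide with $\mu_{x'_i}$ and $\mu_{y'_i}$; since the graph metric on this embedded copy agrees with the metric of $G$, we obtain $W_1(\nu_{x',i}, \nu_{y',i}) = W_{1,G}(\mu_{x'_i}, \mu_{y'_i}) \leq 1 - k$ by the hypothesis on $G$. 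Summing,
\[
W_1(\mu_{x'}, \mu_{y'}) \;\leq\; \tfrac{1}{r}\bigl((r-1)\cdot 1 + (1-k)\bigr) \;=\; 1 - \tfrac{k}{r},
\]
so $\kappa(x', y') \geq k/r$, as required.

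The main obstacle is purely bookkeeping: verifying that the coordinate decomposition produces the correct marginals and that the identity coupling in the $j\neq i$ directions combines properly with an optimal $G$-coupling in the $j=i$ direction to produce a legitimate coupling in $G^r$ with the claimed cost. Once those checks are done, the tensorization bound drops out of the convexity inequality for $W_1$ together with the definition of $\kappa$.
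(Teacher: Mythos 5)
Your proof is correct, and it takes a genuinely different route from the paper's. You work on the primal side: you decompose each one-step measure as the uniform mixture $\mu_{x'} = \tfrac{1}{r}\sum_j \nu_{x',j}$ over the $r$ coordinate directions, assemble per-coordinate couplings into a coupling of the mixtures, and use that the $j\neq i$ directions cost exactly $1$ via the coordinate-flip bijection while the $j = i$ direction inherits the $W_1$-bound $1-k$ from the embedded copy of $G$. The paper instead argues on the Kantorovich dual side, lifting an optimal potential $f_1$ for the pair $(\mu_{x_1},\mu_{y_1})$ on $G$ to the function $f(z_1,\dots,z_r)=f_1(z_1)$ on $G^r$ and evaluating $\sum f\mu_{y}-\sum f\mu_{x}$. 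Worth noticing: since $W_1(\mu_x,\mu_y)$ is a \emph{supremum} over $1$-Lipschitz test functions, plugging in a single test function only bounds $W_1$ from \emph{below}, so the paper's argument as written does not by itself deliver the required upper bound $W_1(\mu_x,\mu_y)\leq 1-k/r$; one must either run the dual computation over an arbitrary $1$-Lipschitz $\tilde f$ on $G^r$ (decomposing as you do and using that $\tilde f$ restricted to each $G$-fiber is $1$-Lipschitz), or pass to the primal coupling picture as you did. Your coupling formulation is the cleaner and logically airtight version of the tensorization argument; the paper's dual sketch needs the "for all $\tilde f$" quantifier reinstated to be complete.
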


Again, we provide a short proof from Ollivier's original work \cite{Oll07}.
\begin{proof}
Let $x$ and $y$ be neighbors in $G^r$. By Theorem \ref{thm:oll_minim}, it suffices to show  $\kappa(x,y)>r$.  Without loss of generality we may assume $x = (x_1,x_2,\dots x_r)$ and $y = (y_1,x_2,\dots x_r)$.  Let $f_1$ be the Kantorovich potential satisfying \[\sum_v f_1(v) \mu_{y_1}(v) - \sum_v f_1(v) \mu_{x_1}(v) = 1-\kappa(x_1,y_1) \leq 1-k.\]
Define $f(z_1,\dots, z_r) = f_1(z_1)$, then we see that \begin{align*}&\sum_w f(w) \mu_y(w) - \sum_w f(w) \mu_x(w) \\ = &\frac{1}{r}\parens{\sum_v f_1(v) \mu_{y_1}(v) - \sum_v f_1(v) \mu_{x_1}(v)} + \frac{r-1}{r}\parens{f_1(y_1)-f_1(x_1)} \\ \leq &\frac{1}{r}(1-k) + \frac{r-1}{r} = 1 - \frac{k}{r}.\end{align*}  In other words, we have $\kappa(x,y)\geq \frac{k}{r}$.
\end{proof}
    
\section{Volume Growth and Spectral Gap in Manifolds}\label{sect:vg_lambda}
In this section we will outline the proof of Buser-type results on manifolds, particularly following the work of Buser \cite{Buser82}, of Agol \cite{Agol}, and of the first author \cite{B15}.  In the following sections we will develop analogous methods to bound the spectral gap and higher eigenvalues in graphs.

Let $M$ be an  $n$-dimensional manifold and let $A$ and $B$ be a Cheeger-minimizing partition of $M$, so that their common boundary $\Sigma = \partial A = \partial B$ satisfies \begin{align*}h(M) = \frac{\Vol(\Sigma)}{\min\parens{\Vol(A),\Vol(B)}}.\end{align*}

The minimax principle tells us that $\lambda_2(M)\leq \max\set{\lambda_1(A),\lambda_1(B)}$ where eigenfunctions $f$ of $A$ (similarly $B$) corresponding to eigenvalue $\mu$ satisfy the Dirichlet boundary conditions \begin{align*}\begin{cases} \Delta f = \mu f \text{ on }A \\ f(\Sigma) = 0. 
\end{cases}\end{align*} 
Here, $\lambda_1(A)$ (similarly $\lambda_1(B)$) is the least non-zero value $\mu$ for which an eigenfunction exists. Without loss of generality assume that $\lambda_1(A)\geq \lambda_1(B)$.
The Rayleigh principle tells us that $\lambda_1(A)$ of a manifold is achieved by minimizing the Rayleigh quotient $\int_A\abs{\nabla f}^2/\int_A f^2$ over functions satisfying the boundary condition $f(\Sigma) = 0$. For more details, see \cite{HJ, Chavel, Lablee}.

Buser's idea is to use a test-function for this Rayleigh quotient that depends linearly on the distance from $\Sigma,$ which we denote $\dist_{\Sigma}(p).$ Specifically, one constructs 
\begin{align*}f(p) = \begin{cases} \dist_{\Sigma}(p) &\text{ if } \dist_{\Sigma}(p) \leq t\\ t &\text{ if } \dist_{\Sigma}(p) \geq t.\end{cases}\end{align*}

Define $A(t) = \{p\in A: \dist_{\Sigma}(p)\leq t\}$.  Buser observes that \begin{align*}&\int_A \abs{\nabla f}^2 \leq \int_{A(t)}1 = \Vol(A(t))\text{ and}\\ &\int_A f^2 \geq \int_{A-A(t)}t^2 = t^2\parens{\Vol(A)-\Vol(A(t)}. 
\end{align*}
Now, for any $t> 0$ satisfying $\Vol(A)> \Vol(A(t))$, one sees that
	\begin{align*}
		\lambda_2(M)\leq \frac{\Vol(A(t))}{t^2\parens{\Vol(A)-\Vol(A(t)}}\,.
    \end{align*}

What remains is to bound $\Vol(A(t))$.  In this step, Buser uses a global lower bound on Ricci curvature and the crucial assumption that $\Sigma$ is a Cheeger-optimal cut-set.
Suppose $N$ is a compact hypersurface (codimension-1 submanifold) of $M$. Further, assume that the planes of $M$ containing a tangent vector of a geodesic segment which minimizes the distance to $N$ have sectional curvatures are bounded below by $\delta$. A consequence of the Heintze-Karcher comparison theorem \cite{HK} is the following volume growth bound:
There exists $\nu_{\delta} \in C^{\infty}[0,\infty)$ such that for all $\tau\geq 0$, we have
		$$\Vol_{n-1} \big ( \dist^{-1}(\tau)\big )\leq \Vol_{n-1}(\Sigma)\nu_{\delta}(\tau).$$

Now, the volume growth bound $\Vol(A(t)) \leq \int_0^t \nu(s)\Vol(\Sigma) \, ds$ can be applied (when clear we will suppress the $\delta$ in $\nu_{\delta}$). Specifically, Buser finds the bound 
\begin{align*}\lambda_2(M) &\leq\frac{\int_0^t \nu(s)\Vol(\Sigma)ds}{t^2 \Vol(A) - t^2\int_0^t \nu(s)\Vol(\Sigma)ds}\\ &\leq \frac{h\int_0^t\nu(s)ds}{t^2(1-h\int_0^t\nu(s)ds)}
\end{align*} 
for $\lambda_2(M)$ in terms of the curvature (again, because the volume growth function $\nu$ depends on curvature), the Cheeger cut-set $A$ and boundary $\Sigma$. We will not reproduce the remainder of Buser's proof \cite{Buser82}, which is somewhat technical, except to state the result: \begin{theorem}[Buser's Inequality, (P. Buser 1982)] If $M$ is an $n$-dimensional manifold with $-(\delta^2)(n-1)$ as a lower bound on curvature (for some $\delta\geq 0$), then 
	\begin{align*}\lambda_2(M)\leq c(\delta h(M) + h(M)^2),
	\end{align*}
where $c$ is a universal constant.\end{theorem}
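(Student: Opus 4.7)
The excerpt has already reduced matters to the estimate
\begin{align*}
\lambda_2(M) \;\leq\; \frac{h\,\int_0^t \nu_\delta(s)\,ds}{t^2\bigl(1 - h\int_0^t \nu_\delta(s)\,ds\bigr)},
\end{align*}
valid for all $t>0$ such that the denominator is positive. The plan is therefore to (i) make the Heintze–Karcher comparison function $\nu_\delta$ explicit enough to estimate $\int_0^t \nu_\delta(s)\,ds$, (ii) choose $t$ depending on the relative sizes of $\delta$ and $h := h(M)$, and (iii) collect the bounds into the desired two–term form $c(\delta h + h^2)$.

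For step (i), the Heintze–Karcher theorem together with the fact that $\Sigma$ is a Cheeger‐minimizing hypersurface gives a pointwise bound of the shape $\nu_\delta(s)\leq \bigl(\cosh(\delta s) + C\sinh(\delta s)\bigr)^{n-1}$, where the mean curvature contribution $C$ is controlled by $h$ via the first-variation characterization of the Cheeger cut. The crude, but sufficient, consequence is a bound of the form $\nu_\delta(s)\leq e^{K(\delta+h) s}$ on an interval $[0,t]$ with $K$ a universal constant; one then gets $\int_0^t \nu_\delta(s)\,ds \leq t\,e^{K(\delta+h)t}$.

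For step (ii), I split on the ratio of $\delta$ and $h$. If $\delta\leq h$, choose $t$ of order $1/h$, small enough that $e^{K(\delta+h)t}\leq 2$; then $h\int_0^t\nu_\delta(s)\,ds\leq 2ht\leq 1/2$, which makes the denominator $\geq t^2/2$ and yields $\lambda_2(M)\leq c_1 h^2$. If $\delta>h$, instead choose $t$ of order $1/\delta$ small enough that $e^{K(\delta+h)t}\leq 2$; then $h\int_0^t \nu_\delta(s)\,ds\leq 2ht\leq c h/\delta\leq 1/2$, and dividing by $t^2\asymp 1/\delta^2$ gives $\lambda_2(M)\leq c_2 \delta h$. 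Combining the two cases produces the inequality
\begin{align*}
\lambda_2(M) \;\leq\; c\bigl(\delta\,h(M) + h(M)^2\bigr).
\end{align*}

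The main obstacle is step (i): one must justify that for a \emph{Cheeger-optimal} $\Sigma$ the mean-curvature contribution entering Heintze–Karcher is bounded by $O(h)$, so that $\nu_\delta$ is really controlled by the two parameters $\delta$ and $h$ rather than by some uncontrolled geometric invariant of $\Sigma$. Once this is in hand, the rest is an elementary optimization of the explicit bound displayed above, exactly as in Buser's original argument; the two-regime split in $t$ is what produces the characteristic linear-plus-quadratic shape of the final inequality.
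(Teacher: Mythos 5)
The paper does not prove this statement; it explicitly stops at the inequality
\begin{align*}
\lambda_2(M)\leq \frac{h\int_0^t\nu(s)\,ds}{t^2\bigl(1-h\int_0^t\nu(s)\,ds\bigr)}
\end{align*}
and says ``We will not reproduce the remainder of Buser's proof \cite{Buser82}, which is somewhat technical.''  So you are filling in an omitted step, and the overall strategy you describe — (i) make $\nu_\delta$ explicit from Heintze–Karcher plus the first-variation identity at a Cheeger cut, (ii) split on $\delta$ vs.\ $h$ and choose $t$ accordingly, (iii) collect — is indeed the shape of Buser's original argument.

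There is, however, a genuine gap in step (i).  The Heintze–Karcher comparison function raises a one-variable expression to the $(n-1)$st power, schematically $\nu_\delta(s)=\bigl(\cosh(\delta s)+\tfrac{h}{(n-1)\delta}\sinh(\delta s)\bigr)^{n-1}$ once one uses that the (averaged) mean curvature of a Cheeger-optimal $\Sigma$ is exactly $h$.  Carrying through the elementary inequalities $\cosh x\leq e^{x}$, $\sinh x/x\leq e^{x}$ gives $\nu_\delta(s)\leq e^{\bigl((n-1)\delta+h\bigr)s}$, \emph{not} $e^{K(\delta+h)s}$ with universal $K$.  The $(n-1)$-st power does not attach $(n-1)$ uniformly: it cancels against the $1/(n-1)$ multiplying the mean-curvature term, so only the $\delta$-part acquires the dimension factor.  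Running your two-regime optimization with $e^{((n-1)\delta+h)s}$ then yields $\lambda_2\leq c\bigl((n-1)\delta h+h^2\bigr)$, which is precisely Buser's inequality as quoted in the introduction of the paper ($\lambda_2\leq 2(n-1)\delta h+10h^2$), but it is \emph{not} the dimension-free form $\lambda_2\leq c(\delta h+h^2)$ asserted in the theorem statement.  The dimension-free version is due to Ledoux (cited in the paper as \cite{Ledoux04spectralgap}) and is obtained by a heat-semigroup argument, not by Heintze–Karcher plus test-function optimization.  So either the theorem's attribution and constant should be read as Buser's original $(n-1)$-dependent bound (in which case your sketch is correct once the exponent is corrected to $((n-1)\delta+h)s$), or the theorem really means Ledoux's universal constant, in which case the Heintze–Karcher route you take cannot close the proof.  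You also flag, correctly, the need for regularity of the Cheeger minimizer to apply Heintze–Karcher and first variation; Buser handles this by an approximation argument which your sketch implicitly assumes away.
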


More recently, Agol observed that the constant in Buser's proof can be improved by optimizing over \emph{all} possible test-functions that depend on the distance from $\Sigma$, not just those that grow linearly up to some critical distance $t$ \cite{Agol}. While reformulating Agol's result using Sturm-Liouville theory, the first author showed that the method can be extended to give bounds on the higher eigenvalues \cite{B15}. One begins with the observation that 
	\begin{equation}\label{eq:Courant}
    	\lambda_{2k}(M)\leq \max\parens{\lambda_k(A),\lambda_k(B)},
	\end{equation}
where $\lambda_1(M),\lambda_2(M),\dots$ are the eigenvalues of $M$ in increasing order and $A$ and $B$ have the properties that 
	\begin{itemize}
    	\item $B=A^{\complement},$
        \item $\Vol(A) \leq \Vol(B),$
    	\item $A \cap B=\partial A=\partial B =: \Sigma,$
    	\item $h(M)=\frac{\Vol(\Sigma)}{\Vol(A)}.$
    \end{itemize}
We denote $D$ to be the set $A$ or $B$ that achieves the maximum in Equation \ref{eq:Courant}.  Here, the Rayleigh quotient is $$\lambda_k(A)
			=\inf_U \sup_{f \in U} \frac{\int_D \|\grad(f)\|^2 \, d\Vol}{\int_D f^2 \, d\Vol},$$
		where $U$ is the set of $k$-dimensional subspaces of the Sobolev space $H_0^1(D)$ on which $f(\Sigma) = 0$. Limiting to only those functions $f$ that depend on the distance from $\Sigma$, the co-area formula implies that 
\begin{align}\label{eqn:rayleigh2}\lambda_k (D) \leq \inf_V \sup_{f \in V} \frac{\int_0^{\infty}f'(s)^2\Vol\big (\dist_{\Sigma}^{-1}(s) \big )\, ds}{\int_0^{\infty} f(s)^2 \Vol\big (\dist_{\Sigma}^{-1}(s)\big )\, ds},\end{align} where $\dist^{-1}_{\Sigma}(s)$ is the set $\{p\in D: \dist(p,\Sigma) = s\}$ and $V$ is the set of all $k$-dimensional subspaces of $H_0^1[0,\infty)$ on which $f(0) = 0$.

Given a Heintze-Karcher-type growth bound $\Vol(\dist_{\Sigma}^{-1}(s))\leq \nu(s)\Vol(\Sigma)$, observe that 
\begin{align*}
\Vol(A(s)) = \int_0^s \Vol\left (\dist_{\Sigma}^{-1}(\tau)\right )\, d\tau \leq \Vol(\Sigma)\int_0^s \nu(\tau)d\tau.
\end{align*}

Because $\Sigma$ is the Cheeger-achieving boundary and $\dist_{\Sigma}^{-1}(s)$ is the boundary for some other non-Cheeger-achieving partition of $M$, we have 
\begin{equation*}h(M) = \frac{\Vol(\Sigma)}{\min\parens{\Vol(A),\Vol(B)}}\end{equation*}
and also  
	\begin{equation*}
			h(M)\leq 
            \frac{\Vol(\dist_{\Sigma}^{-1}(s))}{\min \parens{\Vol(A\setminus A(s)),\Vol(B\cup A(s))}}.
    \end{equation*}

In the case that $\Vol(B\cup A(s))\geq \Vol(A\setminus A(s))$, we have 
\begin{equation*}
h(M)\leq 
\frac{\Vol(\dist_{\Sigma}^{-1}(s))}{\Vol(A\setminus A(s))},
\end{equation*}
and so
\begin{align*}
\Vol(\dist_{\Sigma}^{-1}(s))&\geq h(M)\Vol(A) - h(M)\Vol(A(s)) \\ 
&\geq \Vol(\Sigma) - h(M)\Vol(\Sigma)\int_0^s\nu(\tau)d\tau\\ 
&=\Vol(\Sigma)\parens{1-h(M)\int_0^s\nu(\tau)d\tau}.
\end{align*}

In the other case, we have that $\Vol(B\cup A(s))\leq \Vol(A-A(s))$.  As such, it follows that $\Vol(B)\leq \Vol(A),$ in other words, the set $B$ is the Cheeger minimizing set.  We find that
\begin{align*}\Vol(\dist_{\Sigma}^{-1}(s))\geq h(M)\Vol(B\cup A(s))\geq h(M)\Vol(B) = \Vol(\Sigma),\end{align*} with the last equality following from the definition of $\Sigma$.

Combining both cases, the first author achieves the lower bound 
\begin{align*}\Vol(\dist_{\Sigma}^{-1}(s))\geq \Vol(\Sigma)\parens{1-h(M)\int_0^s\nu(\tau)d\tau}.\end{align*}

Observe that this bound is only meaningful for values of $s$ where 
\begin{align*}h(M)\int_0^s \nu(\tau)d\tau\leq 1.\end{align*}
Because the parameter $s$ is continuous, one can always apply such values. This is one of several ways in which the discrete formulation on graphs presents a challenge which does not appear in the related continuous result on Riemannian manifolds.

Define now $T$ to be the value for which $h(M)\int_0^T \nu(\tau)d\tau = 1.$
With both an upper and lower bound for $\Vol(\dist_{\Sigma}^{-1}(s))$, it is possible to plug those bounds into Equation \ref{eqn:rayleigh2}, truncating the integrals at $T$, to obtain the bound 

\begin{align}\label{eqn:rayleigh3}\lambda_k(A) \leq \inf_{W} \sup_{f\in W} 
\frac{\int_0^T f'(s)^2\nu(s)\, ds}
{\int_0^T f(s)^2\left (1-h\int_0^{s} \nu(\tau)\, d\tau \right )\, ds},\end{align} where $W$ is the set of $k$-dimensional subspaces of $H_0^1[0,T]$ in which $f(0) = 0$.

What remains is the technical problem of finding the function $f$ that minimizes the Rayleigh quotient in Equation \ref{eqn:rayleigh3}. Such a function is an eigenfunction of a Sturm-Liouville problem, which leads to an eigenvalue comparison. Specifically, the eigenvalues of the Laplacian on the manifold are bounded above by the eigenvalues of a Sturm-Liouville (ODE) problem which depends only on the same data as in Buser's original inequality; namely the Cheeger constant, dimension, and Ricci curvature lower bound. For more details, see \cite{B15}.

We will see in Section \ref{sect:Spectrum} that in the discrete case, higher eigenvalues of the graph $\lambda_k(G)$ can be bounded by the eigenvalues of a tridiagonal matrix times a multiplicative factor. The entries of the matrix only depend on the bounds on volume growth, which can be given in terms of several notions of the graph's curvature. Further, the multiplicative factor can be interpreted using the upper bound on the volume growth of the graph and the outer vertex Cheeger constant or its analogues corresponding to splitting the graph into more than two subgraphs.

\section{The Relationship Between Volume Growth and Curvature}\label{sect:vg_cde}

In Section \ref{sect:Spectrum}, we develop the relationship between the spectrum of the Laplace operator on a graph and the volume growth of subsets of the graph. Our goal is to also develop the connection between the spectrum, notions of curvature, and the Cheeger constant of the graph in the form of Buser-type inequalities. To allow us to make these connections in Section \ref{sect:Spectrum}, in this section we discuss volume growth in graphs under several notions of a curvature lower bound.

\subsection{Bounds under $CDE'$ curvature}

We first present a volume growth bound under the so-called $CDE'$ inequality.  This notion of discrete curvature is a variant of the $CD$ inequality, which was introduced by Bakry-\'Emery in \cite{BE85}.  The $CDE'$ inequality was introduced by Bauer et al.\cite{Bauer2013}.  We present only the definition of $CDE'(K,N)$ herein, for a full discussion the reader can consult the paper of Bauer et al.

Let $f,g:V(G)\to \R$ be  functions and $x\in V(G).$ The field-squared operator $\Gamma(f,g)(x)$ is defined by \begin{align*}
\Gamma(f,g)(x) = \frac{1}{2d_x}\sum_{y:y\sim x}\parens{f(x)-f(y)}\parens{g(x)-g(y)}.
\end{align*}
A graph $G$ is said to satisfy the $CDE'(K,N)$ inequality at $x$ if for every function $f:V(G)\to \R^+$, \begin{align*}f^2 \parens{\tfrac{1}{2}\Delta \Gamma(\log f,\log f) - \Gamma(\log f, \Delta \log f)}\geq \frac{1}{N}\parens{\Delta \log f}^2 + K\Gamma(f,f).\end{align*}
In this case, we say that $K$ is a lower bound on the $CDE'$ curvature of $G$ at $x$ with dimension $N$.

In a follow-up work \cite{HLLY15}, a volume growth bound was discovered under a lower bound on $CDE'$ curvature:

\begin{theorem}(Horn, Lin, Liu \& Yau \cite{HLLY15})\label{thm:vg_HLLY} Let $G$ be a locally finite graph satisfying $CDE'(n,0)$. Then there exists a constant $C$ depending on $n$ such that for all $x \in V$ and any integers $r,s$ with $r \geq s$:
	\begin{equation}\label{eq:HLLYVol}
		|\dist^{-1}_x(r)| \leq C \left ( \frac{r}{s}\right )^{\frac{\log(C)}{\log(2)}}|\dist^{-1}_x(s)|.
	\end{equation}
\end{theorem}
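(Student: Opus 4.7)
The plan is to follow the standard heat-kernel route to volume doubling, using the $CDE'(n,0)$ hypothesis as the discrete replacement for a non-negative Ricci bound. The key intermediate output is a parabolic Harnack inequality, from which volume doubling, and then polynomial growth, will follow.

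First I would invoke a Li--Yau type gradient estimate for positive solutions $u$ of the continuous-time heat equation $\partial_t u = -\Delta u$ on $G$. The entire purpose of the $CDE'$ condition, as introduced by Bauer--Jost--Liu, is that it yields a usable Bochner-type inequality strong enough to push the Li--Yau argument through in the discrete setting. Under $CDE'(n,0)$, one expects an estimate controlling the space--time logarithmic derivative of $u$ by a function of $t$ alone, of the schematic form
$$\frac{\Gamma(\sqrt{u})(x)}{u(x,t)} - \frac{1}{2}\frac{\partial_t u(x,t)}{u(x,t)} \leq \frac{C(n)}{t}.$$

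Second I would integrate this gradient estimate along discrete space--time paths joining $(y,t_1)$ to $(z,t_2)$ to obtain a parabolic Harnack inequality for the continuous-time heat kernel $p_t$ on $G$. The output is a bound of the form
$$p_{t_1}(x,y) \leq p_{t_2}(x,z)\,\exp\!\parens{C(n)\,\frac{\dist(y,z)^2}{t_2-t_1}}\parens{\frac{t_2}{t_1}}^{C(n)},$$
where the extra polynomial factor in $t_2/t_1$, absent in the smooth manifold case, is characteristic of the discrete Li--Yau theory and is ultimately responsible for the unusual exponent $\log(C)/\log(2)$ in the conclusion.

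Third, with the Harnack inequality established, I would deduce an on-diagonal lower bound of the form $p_t(x,x) \gtrsim 1/|B(x,\sqrt{t})|$ by the standard trick: apply Harnack between $(x,t)$ and $(y,2t)$ for $y \in B(x,\sqrt{t})$, use $\sum_y p_{2t}(x,y)=1$, and rearrange. Pairing this with the on-diagonal upper bound coming from Nash/ultracontractivity estimates (also deducible from the gradient estimate) produces the volume doubling property $|B(x,2r)| \leq C\,|B(x,r)|$, with $C$ depending only on $n$. Iterating doubling across the dyadic scales $s, 2s, 4s,\ldots$ up to $r$ yields $|B(x,r)| \leq C\,(r/s)^{\log C/\log 2}\,|B(x,s)|$. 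To pass from balls to the level sets $\dist^{-1}_x(\cdot)$, I would use $|\dist_x^{-1}(r)| \leq |B(x,r)|$ on the left and, on the right, compare $|\dist_x^{-1}(s)|$ to $|B(x,s)|-|B(x,s-1)|$, absorbing the small-$s$ cases and the reverse direction using one more application of doubling.

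The main obstacle is Step two: the parabolic Harnack inequality in the discrete setting is significantly more delicate than on manifolds, because the smooth space--time paths along which one wants to integrate do not exist on a graph. The standard workaround is to keep time continuous for the heat semigroup while handling each spatial edge traversal as a jump, carefully balancing these jumps against the Bochner-type estimate supplied by $CDE'$; this is where the $\log(C)/\log(2)$ exponent (rather than a polynomial degree of the form $\dim$-like quantity) emerges. A secondary bookkeeping obstacle is tracking the dependence of every constant on $n$ so that a single $C = C(n)$ can be quoted uniformly in both the prefactor and the exponent of the final inequality.
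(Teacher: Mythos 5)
The paper does not supply a proof of this statement: Theorem~\ref{thm:vg_HLLY} is quoted from Horn, Lin, Liu and Yau~\cite{HLLY15} and used as a black box, so there is no in-paper argument against which to check your proposal. That said, your sketch---a Li--Yau gradient estimate under $CDE'(n,0)$, integrated to a parabolic Harnack inequality, combined with on-diagonal heat-kernel estimates to obtain volume doubling $|B(x,2\rho)|\leq C\,|B(x,\rho)|$, and then iterated across $\lceil\log_2(r/s)\rceil$ dyadic scales to produce the exponent $\log C/\log 2$---does reproduce the strategy of~\cite{HLLY15}; those are precisely the ingredients advertised in the title of that paper, and the dyadic iteration is exactly how the unusual exponent arises.

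One correction to your reading of the statement, affecting your Step~3: the passage from balls to spheres that you propose is unnecessary, because in Theorem~\ref{thm:vg_HLLY} the symbol $\dist_x^{-1}(r)$ denotes the closed ball $B(x,r)=\{y:\dist(x,y)\leq r\}$, not the set of vertices at distance exactly $r$. This is visible from the corollary that immediately follows the theorem in the paper, whose proof forms the difference $|\dist_x^{-1}(r)|-|\dist_x^{-1}(r-1)|$ precisely in order to count the vertices at distance exactly $r$. With the ball interpretation the conclusion is literally iterated volume doubling, and your Step~3 collapses to nothing. Worth flagging: had the claim genuinely been about spheres, your plan would have a real gap. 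The identity $|\dist_x^{-1}(s)|=|B(x,s)|-|B(x,s-1)|$ is fine, but bounding that difference \emph{below} by a constant multiple of $|B(x,s)|$ requires a reverse doubling inequality, which is not a consequence of forward volume doubling and cannot be extracted by ``one more application of doubling'' as you suggest.
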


Note that a similar volume growth bound (albeit with a multiplicative factor $\sqrt{d}$ in the base of the exponent) is implicit in the recent article \cite{Munch19} under the $CD(n,0)$ inequality, and a similar corollary can be obtained by the following method.  We use this bound on ball volumes to prove the following bound on shell volumes:
\begin{corollary}Let $G$ be a graph satisfying $CDE'(n,0)$ at all vertices $x\in V(G)$.  Let $\Sigma \subset V$, and let $C = C(G)$ be the constant from Theorem \ref{thm:vg_HLLY}, let $r>0$.  Then 
$$|\dist^{-1}_{\Sigma}(r)| \leq d|\Sigma|  C(r-1)^{\frac{\log(C)}{\log (2)}}
	\left [C \left ( \frac{r}{r-1}\right )^{\frac{\log(C)}{\log(2)}}-1 \right ].$$
\end{corollary}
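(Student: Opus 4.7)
The plan is to cover the shell $\dist^{-1}_{\Sigma}(r)$ by a union of individual shells about vertices of $\Sigma$, and then control each piece via two applications of Theorem \ref{thm:vg_HLLY}. First, I would observe that if $d(y,\Sigma) = r$ then this minimum is attained at some $x \in \Sigma$ with $d(x,y) = r$, placing $y$ in the closed ball of radius $r$ about $x$ but not in the closed ball of radius $r-1$. This gives the containment
\begin{equation*}
\dist^{-1}_{\Sigma}(r) \subseteq \bigcup_{x\in\Sigma}\bigl[B(x,r)\setminus B(x,r-1)\bigr],
\end{equation*}
from which a union bound yields
\begin{equation*}
|\dist^{-1}_{\Sigma}(r)| \leq \sum_{x\in\Sigma}\bigl(|B(x,r)| - |B(x,r-1)|\bigr).
\end{equation*}

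Next, each difference is controlled by two applications of the doubling bound \eqref{eq:HLLYVol}. Using it with $s = r-1$ gives $|B(x,r)| \leq C\bigl(\tfrac{r}{r-1}\bigr)^{\log(C)/\log(2)}|B(x,r-1)|$, so
\begin{equation*}
|B(x,r)| - |B(x,r-1)| \leq \left[C\Bigl(\tfrac{r}{r-1}\Bigr)^{\log(C)/\log(2)} - 1\right]\,|B(x,r-1)|.
\end{equation*}
Applying \eqref{eq:HLLYVol} a second time, this time with $s = 1$, produces $|B(x,r-1)| \leq C(r-1)^{\log(C)/\log(2)}\,d$, using that in a $d$-regular graph the unit ball about $x$ (in the sense relevant to \eqref{eq:HLLYVol}) has size $d$. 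Multiplying these two per-vertex estimates and summing over $x \in \Sigma$ delivers exactly the inequality claimed in the corollary.

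The main obstacle is essentially a bookkeeping one: one must read Theorem \ref{thm:vg_HLLY} as a genuine volume-doubling statement on closed balls, so that the difference $|B(x,r)| - |B(x,r-1)|$ emerging from the argument above is a bona fide shell volume and so that $|B(x,1)|$ can be identified (up to a negligible additive constant) with the degree $d$. With that convention fixed, the proof collapses to the union bound above plus the two appeals to \eqref{eq:HLLYVol}. The boundary case $r = 1$ is not literally covered by the formula, since $r/(r-1)$ is undefined there, but the inequality $|\dist^{-1}_{\Sigma}(1)| \leq d|\Sigma|$ holds immediately from the degree bound on a $d$-regular graph.
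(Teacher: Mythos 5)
Your proof is essentially the paper's: you both use the covering/union bound $\dist^{-1}_{\Sigma}(r) \subseteq \bigcup_{x\in\Sigma}\bigl[B(x,r)\setminus B(x,r-1)\bigr]$, apply Theorem~\ref{thm:vg_HLLY} once with $s=r-1$ to control the relative shell size and once with $s=1$ to bound $|B(x,r-1)|$, and then sum over $x\in\Sigma$. Your bookkeeping caveat about reading \eqref{eq:HLLYVol} as a ball-doubling statement (so that $|B(x,1)|$ is identified with $d$) is exactly the convention the paper tacitly adopts, right down to the same small imprecision of writing $|\dist^{-1}_x(1)| = d$ rather than $d+1$.
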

\begin{proof}Letting $s=r-1$ in Equation \ref{eq:HLLYVol}, the estimate becomes
	$$|\dist^{-1}_x(r)| \leq C \left ( \frac{r}{r-1} \right )^{\frac{\log(C)}{\log(2)}}|\dist^{-1}_x(r-1)|.$$
Since we are interested in counting vertices with distance exactly $r$ from $x$, we wish to consider $|\dist^{-1}_x(r)|-|\dist^{-1}_x(r-1)|$, so we subtract $|\dist^{-1}_x(r-1)|$ from both sides of the previous inequality to give
	$$|\dist^{-1}_x(r)|-|\dist^{-1}_x(r-1)| \leq \left [ C \left (\frac{r}{r-1} \right )^{\frac{\log(C)}{\log(2)}} -1 \right]|\dist^{-1}_x(r-1)|.$$
In fact, we want to consider the set of vertices with distance exactly $r$ from $\Sigma$.  We can sum over all $x \in \Sigma$ on both sides of the previous equation to give
	$$|\dist^{-1}_{\Sigma}(r)|\leq \sum_{x \in \Sigma} |\dist^{-1}_x(r)|-|\dist^{-1}_x(r-1)| \leq 
	\sum_{x \in \Sigma} \left [ C \left (\frac{r}{r-1} \right )^{\frac{\log(C)}{\log(2)}} -1 \right]|\dist^{-1}_x(r-1)|.$$
Simplifying, we find
	$$|\dist^{-1}_{\Sigma}(r)| \leq |\Sigma| \left [C \left ( \frac{r}{r-1}\right )^{\frac{\log(C)}{\log(2)}}-1 \right ] 
	\max_{x \in \Sigma} |\dist^{-1}_{\Sigma}(r-1)|.$$
Now we wish to estimate the term $\max_{x \in \Sigma} |\dist^{-1}_{\Sigma}(r-1)|$ and we will again apply Equation \ref{eq:HLLYVol}, this time  we replace $r$ with $r-1$ in the formula and take $s=1$. As a result, our estimate becomes 
	$$|\dist^{-1}_{\Sigma}(r)| \leq d|\Sigma|  C(r-1)^{\frac{\log(C)}{\log (2)}}
	\left [C \left ( \frac{r}{r-1}\right )^{\frac{\log(C)}{\log(2)}}-1 \right ],$$
observing that $|\dist^{-1}_x(1)| = d$.\end{proof}

\subsection{Bounds under Ollivier curvature}

Next, we will find upper bounds on the shell volume $|\dist^{-1}_x(i)|$ in terms of a lower bound on Ollivier curvature.  It is simple to convert such bounds into bounds on the ball volume (analogous to the Bishop-Gromov Volume Comparison Theorem \cite{Bishop}) with the equation $$|B_x(r)| = \displaystyle \sum_{i=0}^r |\dist^{-1}_x(i)|.$$

In this area, there is a previous result due to Paeng \cite{Paeng12}.

\begin{theorem}[Paeng \cite{Paeng12}] Let $G$ be a graph with maximum degree $D$.  Let $r$ be an integer with $0\leq r\leq \diam(G)$.  Assume that $\kappa(x,y) \geq k$ for all $x,y\in V$. 
	\begin{align}|d^{-1}_x(r)| \leq D^r \prod_{m=0}^{r-1} \left ( 1-\frac{k}{2}m \right )\,.\end{align}
\end{theorem}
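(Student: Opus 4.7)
My approach is based on Ollivier's characterization of curvature via the Wasserstein distance. The key is to use Theorem \ref{thm:oll_minim}, which upgrades the hypothesis $\kappa(u,v)\geq k$ from neighbor pairs to all pairs of vertices, together with Kantorovich duality applied to a well-chosen 1-Lipschitz test function to extract a structural constraint on the sizes of the shells $S_r := \dist^{-1}_x(r)$.

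For each $y\in S_r$ denote by $a(y),b(y),c(y)$ the numbers of neighbors of $y$ lying in $S_{r-1}$, $S_r$, $S_{r+1}$, respectively (these exhaust the possibilities because $d(x,\cdot)$ is 1-Lipschitz), so that $a(y)+b(y)+c(y)=d_y$. Theorem \ref{thm:oll_minim} gives $W_1(\mu_x,\mu_y) \leq (1-k)r$. Plugging the 1-Lipschitz test function $f(w)=d(x,w)$ into the Kantorovich dual yields, after a short computation using the decomposition above,
\[
\sum_w f(w)\bigl(\mu_y(w)-\mu_x(w)\bigr) \;=\; r - \tfrac{1}{2} + \frac{c(y)-a(y)}{2d_y},
\]
and bounding this by $(1-k)r$ gives the local inequality $c(y)-a(y)\leq d_y(1-2kr)$, controlling the forward branching of $y$ in terms of its back-neighbors.

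Summing over $y\in S_r$, I would use the identities $\sum_{y\in S_r}c(y) = e(S_r,S_{r+1})\geq |S_{r+1}|$ and $\sum_{y\in S_r}a(y) = e(S_{r-1},S_r)\leq D|S_{r-1}|$, together with $d_y\leq D$, to obtain a recursion bounding $|S_{r+1}|$ in terms of $|S_r|$ and $|S_{r-1}|$. The desired product bound $|S_r|\leq D^r\prod_{m=0}^{r-1}(1-km/2)$ would then follow by induction on $r$, with base cases $|S_0|=1$ and $|S_1|\leq d_x\leq D$.

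The main obstacle I expect is calibrating constants to reproduce Paeng's exact factor $1-km/2$, rather than the weaker per-shell factor of the form $1-2kr$ that falls out of the naive pointwise estimate above. To tighten the analysis I would complement $f(w)=d(x,w)$ with a second 1-Lipschitz test function such as $f(w)=\min(d(x,w),r)$, which produces a matching lower bound $a(y)\geq d_y(2kr-1)_+$ on the number of back-edges. Averaging these two local inequalities across $S_r$, rather than applying the worst-case substitution $a(y)\leq d_y$, should recover the sharper multiplicative factor. A final bookkeeping step handles the regime where $1-km/2$ would become negative: there the product telescopes to zero, consistent with the Bonnet--Myers-type diameter bound implicit in Theorem \ref{thm:oll_minim}.
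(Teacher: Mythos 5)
Your Kantorovich computation with the test function $f(w)=d(x,w)$ is correct: for $y\in S_r$ one has $\sum_w f(w)\mu_y(w)=r+\tfrac{c(y)-a(y)}{2d_y}$ and $\sum_w f(w)\mu_x(w)=\tfrac12$, so comparing with $W_1(\mu_x,\mu_y)\le(1-k)r$ (using Theorem~\ref{thm:oll_minim}) gives $c(y)-a(y)\le d_y(1-2kr)$. But the way you propose to close the argument does not work. Summing over $y\in S_r$ with $\sum c(y)\ge|S_{r+1}|$ and $\sum a(y)\le D|S_{r-1}|$ produces the two-term recursion $|S_{r+1}|\le D(1-2kr)|S_r|+D|S_{r-1}|$, and one can check that this recursion does not propagate the product bound $D^r\prod_{m=0}^{r-1}(1-\tfrac{k}{2}m)$ by induction: the $D|S_{r-1}|$ carry-over term is too large for small $r$. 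Your proposed remedy, the second test function $f_2(w)=\min(d(x,w),r)$ giving $a(y)\ge d_y(2kr-1)_+$, is also not the missing ingredient, since a \emph{lower} bound on $a(y)$ is already subsumed by the elementary constraint $a(y)+c(y)\le d_y$ that you need anyway.

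The step you are circling around but never quite write down is simply to average the Kantorovich inequality with the trivial inequality: adding $c(y)-a(y)\le d_y(1-2kr)$ to $a(y)+c(y)\le d_y$ and halving gives the pointwise bound $c(y)\le d_y(1-kr)$. Since every vertex of $S_{r+1}$ has at least one neighbor in $S_r$, this yields the clean one-step recursion $|S_{r+1}|\le\sum_{y\in S_r}c(y)\le D(1-kr)|S_r|$, and induction from $|S_0|=1$, $|S_1|\le D$ gives $|S_r|\le D^r\prod_{m=1}^{r-1}(1-km)$. This is a complete and correct proof. Note, however, that it gives the factor $1-km$ rather than the stated $1-\tfrac{k}{2}m$; the two differ by the idleness convention for $\mu_x$. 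With the non-lazy walk (no mass at $x$, which is what Paeng uses) the same computation yields $c(y)-a(y)\le d_y(1-kr)$, hence $c(y)\le d_y(1-\tfrac{k}{2}r)$ and exactly the stated product, and the curvatures in the two conventions differ by the compensating factor. So once the algebra above is supplied, your method does prove the result; the ``calibration'' you worried about is real but is a matter of convention, not a defect in the proof strategy.
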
 

These bounds are only useful in the case that $k > 0$: if we set $k=0$ above, we see only the trivial result that $|f^{-1}(r)|\leq D^r$.
In the case $k > 0$, we see that $|\dist_x^{-1}(\lceil 2/k + 1\rceil)|\leq 0$; that is, $G$ has  $\lceil 2/k \rceil\leq \diam(G)$. Because $G$ is finite, $G$ has polynomial volume growth with $|\dist^{-1}_x(r)|\leq |V(G)|r^0$ (depending on $G$, a much tighter bound may be possible.)  
We develop results that are useful in the case that $G$ has a negative lower bound on curvature.  We find that such graphs do not necessarily have polynomial volume growth.  {\it We remark here that it remains an open question whether or not a bound of $\kappa(x,y) \geq 0$, for all $x,y\in V$, implies polynomial volume growth.}

\begin{theorem}\label{thm:vg1}Let $G$ be a $d$-regular graph with $\kappa(v_1,v_2)\geq k$, for every pair of vertices $v_1,v_2$.  Fix $x\in V$, define $S_i = \dist^{-1}_x(i)$.  Then for $i\geq 1$, $$|S_{i+1}|\leq \frac{d+1-2dk}{2}|S_i|.$$\end{theorem}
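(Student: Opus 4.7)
The plan is to apply the Ollivier curvature bound between adjacent vertices in consecutive distance shells, via the Kantorovich--Rubinstein dual formulation of $W_1$. Fix $v \in S_i$ with $i \geq 1$, and choose a neighbor $u \in S_{i-1}$ of $v$ (which exists by shortening a geodesic from $x$ to $v$). By the hypothesis $\kappa(u,v) \geq k$, we have $W_1(\mu_u,\mu_v) \leq 1-k$.

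The first step is to invoke the Kantorovich--Rubinstein duality with the $1$-Lipschitz potential $f(w) := \dist(w,u)$. Tracking how many of $v$'s $d$ neighbors lie at distance $0$, $1$, or $2$ from $u$, and using that $\mu_v(v) = \tfrac12$ and $\mu_v(w) = \tfrac{1}{2d}$ for $w \sim v$, a direct computation gives
\[
W_1(\mu_u,\mu_v) \;\geq\; \sum_{w} f(w)\bigl(\mu_v(w) - \mu_u(w)\bigr) \;=\; \frac{2d - T - 2}{2d},
\]
where $T$ denotes the number of common neighbors of $u$ and $v$. Combined with the curvature upper bound this yields the triangle-count inequality $T \geq 2dk - 2$. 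Next, a structural observation: any neighbor $w$ of $v$ lying in $S_{i+1}$ cannot coincide with $u$ nor with a common neighbor of $u$ and $v$, since both $u$ and its neighbors lie in the ball $B(x,i)$ while $\dist(w,x) = i+1$. Writing $c_v$ for the number of neighbors of $v$ in $S_{i+1}$, we therefore obtain the pointwise bound $c_v \leq d - 1 - T \leq d + 1 - 2dk$.

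Summing this estimate over $v \in S_i$ and using $|S_{i+1}| \leq \sum_{v} c_v$ (since every vertex in $S_{i+1}$ has at least one neighbor in $S_i$) gives $|S_{i+1}| \leq (d+1-2dk)|S_i|$, which is twice the claim. Recovering the factor of $\tfrac12$ requires a symmetric application of Ollivier curvature to adjacent pairs $v \in S_i$ and $w$ in $S_{i+1}$: the analogous calculation with the potential $\dist(\cdot,v)$ shows that $v$ and $w$ also have at least $2dk - 2$ common neighbors, and since these common neighbors must lie in $S_i \cup S_{i+1}$, one obtains the companion estimate $p_w + c_v \geq 2dk$, where $p_w$ denotes the number of neighbors of $w$ in $S_i$. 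Averaging the two families of inequalities against the double-counting identity $\sum_v c_v = \sum_w p_w$ over edges in $E(S_i, S_{i+1})$ then produces the claimed bound.

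The main obstacle I expect is precisely this last combinatorial step. The first two steps, which amount to writing down the dual bound and reading off a structural consequence, are essentially forced once one chooses $f = \dist(\cdot, u)$. By contrast, the pointwise inequalities $c_v \leq d+1-2dk$ and $p_w + c_v \geq 2dk$ must be combined with some care (rather than merely summed), so as to extract the additional factor of $\tfrac12$ that distinguishes the claim from the naive upper bound; a clean way to package this combination -- perhaps by a weighted edge-counting argument or a convexity estimate on $\sum c_v^2$ and $\sum p_w^2$ -- is the delicate part of the proof.
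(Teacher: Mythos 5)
The overall skeleton (Kantorovich duality between the balls $\mu_u,\mu_v$ for a vertex $v\in S_i$ and its parent $u\in S_{i-1}$, then a combinatorial count of $S_{i+1}$ that recovers a factor $\tfrac12$ from vertices having $\geq 2$ neighbours in $S_i$) is the same as the paper's, and your computation of the dual value for $f=\dist(\cdot,u)$, giving $T\geq 2dk-2$, is correct. However, there is a genuine gap: the potential $f=\dist(\cdot,u)$ is too coarse, and the second-family inequality $p_w+c_v\geq 2dk$ does not repair it.

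The issue is this. Both of your extracted inequalities (A) $c_v\leq d-1-T\leq d+1-2dk$ and (B) $p_w+c_v\geq 2dk$ become vacuous whenever $k\leq 1/d$: (A) reduces to $c_v\leq d-1$, and (B) reduces to $p_w+c_v\geq 2$, both automatic for an edge $vw$. Yet the theorem's conclusion $|S_{i+1}|\leq \tfrac{d+1-2dk}{2}|S_i|$ is a genuine improvement over the trivial $|S_{i+1}|\leq (d-1)|S_i|$ whenever $d\geq 4$ and, say, $k=0$. So no averaging, convexity, or double-counting argument built solely out of (A), (B), and the handshake identity $\sum_v c_v=\sum_w p_w$ can prove the statement in that regime; the inputs contain no information.

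What the paper does differently is the choice of Kantorovich potential. Rather than $\dist(\cdot,u)$, the paper partitions the neighbours of $z:=v$ into $\{y\}$, the common neighbours $T(z)$, a set $W(z)$ of neighbours possessing a second length-two path to $y$, and the remainder $U(z)$, and sets $f=0$ on $\{y\}\cup T(z)\cup W(z)$, $f=1$ on $\{z\}\cup U(z)$, and crucially $f=-1$ on the remaining neighbours of $y$. This last choice is what distinguishes $U(z)$ from $W(z)$: the vertices in $W(z)$ are pinned to $0$ because they neighbour a $-1$ vertex, while $U(z)$ can float to $1$. The resulting inequality $|U(z)|\leq |T(z)|+2-2dk$ is nonvacuous for all $k$ (for example it gives $|U(z)|\leq |T(z)|+2$ at $k=0$). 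And it is exactly what is needed to bound the number $N_1$ of vertices in $S_{i+1}$ having a unique $S_i$-neighbour, since every such vertex lies in the $U(z)$ of its unique neighbour. That bound on $N_1$ combined with $e(S_i,S_{i+1})\leq (d-1)|S_i|-T^*$ and $|S_{i+1}|\leq \tfrac{N_1+e(S_i,S_{i+1})}{2}$ produces the factor $\tfrac12$, with the $T^*$ terms cancelling. Your approach collapses $U(z)$ and $W(z)$ into a single ``distance-two'' class, and once that distinction is lost, the bound on $N_1$ — and hence the factor $\tfrac12$ — is out of reach.
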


\begin{proof}
First, we bound $e(S_i,S_{i+1})$, the number of edges between $S_i$ and $S_{i+1}$. Let $z\in S_i$, $z$ is adjacent to some vertex $y(z)\in S_{i-1}$.  (If $z$ is adjacent to multiple vertices in $S_{i-1}$, choose $y(z)$ arbitrarily from them.)  Let $T(z)$ be the set of common neighbors of $z$ and $y(z)$. Neither $y$ nor a neighbor of $y$ can be in $S_{i+1}$, so $e(z,S_{i+1})\leq d-1-|T(z)|$.  (Note that we frequently suppress $y(z)$ to $y$.)  Let $T^* = \sum_{z\in S_i} |T(z)|$, so $e(S_i,S_{i+1})\leq (d-1)|S_i| - T^*.$

Next, for each $z$ we wish to use the Kantorovich characterization of $W_1(\mu_y,\mu_z)$.
Define the following test-function $f$:\begin{itemize} \item $f(y) = 0$. \item $f(z) = 1$. \item $f|_{T(z)} = 0$. \item For any other neighbor $v$ of $y$, $f(v) = -1.$  \item Let $W(z)$ be the set of neighbors of $z$ (besides $y$) that are not in $T(z)$ and are adjacent to a neighbor of $y$ (besides $z$) that is not in $T(z)$.  We may set $f|_{W(z)} = 0$. \item Let $U(z) = N(z) \setminus\parens{ \{y\}\cup T(z)\cup W(z)}$, set $f|_{U(z)} = 1$.  Here we use $N(z)$ to denote the set of neighbors of $z$.
\item $f$ can be made $1$-Lipschitz by setting $f = 0$ on every other vertex.
\end{itemize}
We have:
\[\sum_x f(x)\mu_z(x) = 1 + \frac{|U(z)|-d}{2d} \ \mbox{ and } \sum_x f(x)\mu_y(x) = \frac{|T(z)|+2-d}{2d}.\]
Combining the two, we get
\[(1-k)\geq \sum_x f(x)(\mu_z(x)-\mu_y(x)) \geq 1 + \frac{|U(z)|-2-|T(z)|}{2d}\,,\] and rearranging gives
\[|T(z)|+2-2dk \geq |U(z)|\,.\]

If a neighbor of $z$ is not in $U(z)$, the neighbor must be either $y$, adjacent to $y$ (and thus in $T(z)$), or adjacent to more than one neighbor of $y$, and hence in $W(z)$.\\ Any vertex in $S_{i+1}$ for which $z$ is the only neighbor in $S_i$ must be in $U(z)$.  The total number $U^*$ of vertices in $S_{i+1}$ that are adjacent to only one vertex in $S_i$ is at most \begin{align*}U^*\leq \sum_z |U(z)|\leq \sum_z \parens{|T(z)|+2-2dk} = T^*+|S_i|(2-2dk).\end{align*}

We can now see that the number of vertices in $S_{i+1}$ that are adjacent to more than one vertex in $S_i$ is bounded above by $$\frac{(d-1)|S_i|-T^*-U^*}{2}. $$  This is because the total number of possible edges from $S_i$ to these vertices is at most $e(S_i,S_{i+1})\leq (d-1)|S_i|-T^*$ less the $U^*$ edges that are accounted for by vertices in $S_{i+1}$ with only one neighbor in $S_i$.  Every other vertex must be incident to at least $2$ of those $(d-1)|S_i|-T^*-U^*$ edges, so we divide by $2$. 

Now, we add the other $U^*$ vertices in $S_{i+1}$ to achieve the desired result: 
\begin{align*}
|S_{i+1}|& \leq U^*+\frac{(d-1)|S_i|-T^*-U^*}{2} = \frac{(d-1)|S_i|-T^*+U^*}{2}\\
	&\leq\frac{(d-1)|S_i|-T^* + (2-2dk)|S_i|+T^*}{2} = \frac{d+1-2dk}{2}|S_i|.
\end{align*}
\end{proof}

Following the same proof outline, we obtain a better bound for bipartite graphs.

\begin{theorem}\label{thm:vg_bipartite}Let $G$ be a $d$-regular bipartite graph with $\kappa(v_1,v_2)\geq k$ for every pair of vertices $v_1,v_2$.  Fix $x\in V$, define $S_i = \dist^{-1}_x(i)$. For $i\geq 1$, $$|S_{i+1}|\leq \frac{d-dk}{2}|S_i|.$$\end{theorem}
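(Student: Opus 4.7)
The plan is to follow the structure of the proof of Theorem \ref{thm:vg1} and sharpen its key Kantorovich estimate by exploiting bipartiteness in the choice of test function. Fix $z \in S_i$ with a chosen neighbor $y = y(z) \in S_{i-1}$, and define $T(z)$, $W(z)$, and $U(z)$ exactly as in that proof.

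Two structural consequences of bipartiteness drive the improvement. Since $y \sim z$ forces $y$ and $z$ into opposite color classes, they can share no common neighbor, so $T(z) = \emptyset$. Moreover, every $v \in U(z) \subseteq N(z)$ lies in the color class of $y$, hence is at distance $\geq 2$ from $y$ itself; and every $u \in N(y)\setminus\{z\}$ lies in the color class of $z$, so $v$ and $u$ lie in opposite color classes with $v \not\sim u$ (by the definition of $U(z)$), forcing $d(v,u) \geq 3$ in a bipartite graph. These two facts let me use the same test function as in Theorem \ref{thm:vg1}, but with $f \equiv 2$ on $U(z)$ instead of $f \equiv 1$: namely $f(y) = 0$, $f(z) = 1$, $f \equiv -1$ on $N(y)\setminus\{z\}$, $f \equiv 0$ on $W(z)$, and $f \equiv 2$ on $U(z)$. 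Every pairwise constraint $|f(p)-f(q)| \leq d(p,q)$ on $\{y,z\} \cup N(y) \cup N(z)$ is satisfied, so $f$ extends to a $1$-Lipschitz function on $V$ by the McShane construction.

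Plugging this potential into Kantorovich duality gives, after a direct computation,
\[
\sum_v f(v)\,\mu_z(v) - \sum_v f(v)\,\mu_y(v) \;=\; \frac{d-1+|U(z)|}{d} \;\leq\; W_1(\mu_y,\mu_z) \;\leq\; 1-k,
\]
and hence $|U(z)| \leq 1 - dk$ (the corresponding bound in the general case was $|U(z)| \leq |T(z)| + 2 - 2dk$, so the bipartite savings comes from both $T(z)=\emptyset$ and the doubled value on $U(z)$). From this point the argument proceeds exactly as in Theorem \ref{thm:vg1}: each $z \in S_i$ satisfies $e(z, S_{i+1}) \leq d-1$, so $e(S_i, S_{i+1}) \leq (d-1)|S_i|$; the number $U^*$ of vertices of $S_{i+1}$ with a unique neighbor in $S_i$ obeys $U^* \leq \sum_{z \in S_i} |U(z)| \leq (1-dk)|S_i|$; and the remaining vertices of $S_{i+1}$ each consume at least two of the edges counted by $e(S_i, S_{i+1})$, so
\[
|S_{i+1}| \;\leq\; U^* + \tfrac{1}{2}\bigl(e(S_i,S_{i+1}) - U^*\bigr) \;\leq\; \tfrac{1}{2}\bigl((1-dk) + (d-1)\bigr)|S_i| \;=\; \tfrac{d-dk}{2}\,|S_i|.
\]
The one place demanding real care is verifying that the strengthened test function is genuinely $1$-Lipschitz on the support of $\mu_y + \mu_z$; this is precisely where the bipartite hypothesis bites, via $T(z)=\emptyset$ together with the fact that non-adjacent vertices of opposite color classes are at distance at least $3$.
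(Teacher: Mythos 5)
Your proof is correct and follows essentially the same route as the paper's: same definitions of $y(z)$, $W(z)$, $U(z)$, same Kantorovich test function with $f\equiv 2$ on $U(z)$, and the identical $U^*$-counting argument at the end. The only cosmetic difference is that where the paper simply asserts the $1$-Lipschitz extension (assigning $1$ to remaining vertices in $z$'s color class and $0$ to those in $y$'s), you instead verify the pairwise distance constraints directly on $\{y,z\}\cup N(y)\cup N(z)$ and appeal to McShane; both are valid, and your version makes the role of the bipartite distance-$\geq 3$ fact between $U(z)$ and $N(y)\setminus\{z\}$ somewhat more transparent.
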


\begin{proof}
First, we bound $e(S_i,S_{i+1})$, the number of edges between $S_i$ and $S_{i+1}$.  Let $z\in S_i$, $z$ is adjacent to some vertex $y(z)\in S_{i-1}$. Because $y\notin S_{i+1}$, $e(z,S_{i+1})\leq d-1$. Clearly, $e(S_i,S_{i+1})\leq (d-1)|S_i|$.

Next, for each $z$ we wish to use the Kantorovich characterization of $W_1(\mu_y,\mu_z)$.
Define a test-function $f$: 
\begin{itemize} \item $f(y) = 0$.
\item $f(z) = 1$.
\item  For any other neighbor $v$ of $y$, $f(v) = -1.$ 
\item Let $W(z)$ be the set of neighbors of $z$ (besides $y$) that are adjacent to a neighbor of $y$ other than $z$.  Set $f|_{W(z)} = 0$.  
\item Let $U(z) = N(z) \setminus \parens{ \{y\}\cup W(z)}$, set $f|_{U(z)} = 2$.  
\item $f$ can be made $1$-Lipschitz by setting $f = 1$ on any other vertex in the same set of the bipartition as $z$ and $f = 0$ on any other vertex in the same set as $y$. \end{itemize}
We have:
\[\sum_x f(x)\mu_z(x) = 1 + \frac{2|U(z)|-d}{2d} \ \mbox{ and } \ \sum_x f(x)\mu_y(x) = \frac{2-d}{2d}\,.\]
\\\text{Combining, }
\[(1-k)\geq \sum_x f(x)(\mu_z(x)-\mu_y(x)) \geq 1 + \frac{2|U(z)|-2}{2d}\,,\]\\ \text{resulting in }
\[ |U(z)| \le 1-dk\,.\]

If a neighbor of $z$ is not in $U(z)$, it must be either $y$ or adjacent to more than one neighbor of $y$, and thus in $W(z)$.\\ Any vertex in $S_{i+1}$ for which $z$ is the only neighbor in $S_i$ must be in $U(z)$.  The total number $U^*$ of vertices in $S_{i+1}$ that are adjacent to only one vertex in $S_i$ is at most \begin{align*}U^*\leq \sum_z |U(z)|\leq |S_i|(1-dk).\end{align*}

We can now bound the number of vertices in $S_{i+1}$ that are adjacent to more than one vertex in $S_i$ from above by $$\frac{(d-1)|S_i|-U^*}{2}. $$  This is because the total number of possible edges from $S_i$ to these vertices is at most $e(S_i,S_{i+1})\leq (d-1)|S_i|$ less the $U^*$ edges that are accounted for by vertices in $S_{i+1}$ with only one neighbor in $S_i$.  Each counted vertex must be incident to at least $2$ of those $(d-1)|S_i|-U^*$ edges, so we divide by $2$. 

Now, we add the other $U^*$ vertices to achieve the desired bound on $|S_{i+1}|$: \begin{align*}|S_{i+1}|\leq U^*+\frac{(d-1)|S_i|-U^*}{2} = \frac{(d-1)|S_i|+U^*}{2}\\ \leq\frac{(d-1)|S_i| + (1-dk)|S_i|}{2} = \frac{d(1-k)}{2}|S_i|.\end{align*}
\end{proof}

We continue to denote $S_i=\dist_x(i)$ and summarize the results of Theorems \ref{thm:vg1} and \ref{thm:vg_bipartite} as follows.
\begin{theorem} \label{thm:vg}
For any $d$-regular graph, for $i\geq 1$, we have \begin{align*}|S_i|\leq d^i\biggl(\frac{1+\tfrac{1}{d}-2k}{2}\biggr)^{i-1}.\end{align*} 

For any $d$-regular bipartite graph, for $i\geq 1$, we also have \begin{align*}|S_i|\leq d^i \biggl(\frac{1-k}{2}\biggr)^{i-1}.\end{align*}
\end{theorem}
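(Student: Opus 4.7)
The plan is to obtain Theorem \ref{thm:vg} directly as a corollary of Theorems \ref{thm:vg1} and \ref{thm:vg_bipartite}, by straightforward induction on $i$ starting from the trivial degree bound at $i=1$.

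First I would establish the base case. Since $G$ is $d$-regular and $S_1 = \dist^{-1}_x(1)$ is exactly the set of neighbors of $x$, we have $|S_1| \le d$. Notice that this matches both claimed inequalities at $i=1$, because the factor $\bigl(\frac{1+1/d-2k}{2}\bigr)^{i-1}$ and $\bigl(\frac{1-k}{2}\bigr)^{i-1}$ both equal $1$ when $i=1$.

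For the inductive step in the general $d$-regular case, I would assume the bound $|S_i| \le d^i\bigl(\frac{1+1/d-2k}{2}\bigr)^{i-1}$ holds and apply Theorem \ref{thm:vg1} to get
\begin{align*}
|S_{i+1}| \le \frac{d+1-2dk}{2}\,|S_i| = d\cdot\frac{1+1/d-2k}{2}\cdot|S_i|,
\end{align*}
which combined with the inductive hypothesis yields $|S_{i+1}| \le d^{i+1}\bigl(\frac{1+1/d-2k}{2}\bigr)^{i}$, as desired. The bipartite case is identical, with Theorem \ref{thm:vg_bipartite} supplying the recursion $|S_{i+1}| \le \frac{d(1-k)}{2}|S_i|$ in place of the general one.

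There is essentially no obstacle here, as the statement is just a clean packaging of the per-step shell-growth bounds already proved, multiplied out into a closed-form exponential expression; the only thing to be careful about is that the recursions in Theorems \ref{thm:vg1} and \ref{thm:vg_bipartite} are stated for $i \ge 1$ (so they apply from $S_1 \to S_2$ onward), which is exactly why the exponent on the shell-growth factor is $i-1$ rather than $i$ in the final bound.
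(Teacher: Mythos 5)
Your proposal is correct and takes essentially the same approach as the paper: the paper's proof states the base cases $|S_0| = 1$, $|S_1| = d$ and then invokes repeated application of Theorems \ref{thm:vg1} and \ref{thm:vg_bipartite}, leaving the induction to the reader, which is exactly the argument you have spelled out.
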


\begin{proof}
Observe $S_0 =1$ and $S_1 = d$ for every graph. Repeated application of Theorems \ref{thm:vg1} and \ref{thm:vg_bipartite} gives the desired bounds for $S_i$ when $i\geq 2.$ This can be made formal using induction, which is left to the reader.
\end{proof}

As far as we are aware, these are the first non-trivial bounds on volume growth under a negative bound on Ollivier curvature.  A weakness in the proof method is that vertices in $S_{i+1}$ are counted either as having exactly one neighbor in $S_i$ (i.e., in some set $U(x)$), or as having several neighbors ($W(x)$), but the upper bound on $|S_{i+1)}|$ assumes the worst case - that there are a large number of vertices of type $W$, each having only $2$ neighbors in $S_i$.  For graphs where that assumption is correct (or close), our bound is somewhat tight.  In other graphs, the average number of neighbors in $S_i$ for any vertex in $S_{i+1}$ can be $O(d)$.  For those graphs the bound is not tight.  Below we give an example illustrating this issue.

\begin{example}
Let $T_p$ be the infinite $p$-regular tree and $T_p^q$ be the Cartesian product graph $T_p\square T_p\square \cdots \square T_p$, with the product taken $q$ times. Note that $T_p^q$ is $pq$-regular.  It is easy to compute that $T_p$ has $k(x,y) = \tfrac{2-p}{p}$ if $x\sim y$.  By tensorization of curvature (see for instance \cite{KKRT15}), we know that $T_p^q$ has $k(x,y) \geq \tfrac{2-p}{pq}$ whenever $x\sim y$.

Because $T_p^q$ is bipartite, we apply the second statement of Theorem \ref{thm:vg} to find the bound \begin{align}|d^{-1}_x(i)|\leq (pq)^i\parens{\frac{1-\tfrac{2-p}{pq}}{2}}^{i-1} = pq\parens{\frac{p(q+1)}{2}-1}^{i-1},\end{align} so that \begin{align} \log(|d^{-1}_x(i)|)\leq  i\log\parens{\frac{p(q+1)}{2}-1} + O(1).\end{align}

A vertex $y\in \dist^{-1}_x(i)$ is characterized by the distance from $x$ parallel to each of the $q$ copies of $T_p$ in the product graph, and, given those distances, by the path taken in $T_p$ of that distance.

There are $\binom{i+q-1}{q}$ choices of what distance is traveled along each copy of $T_p$. At each step of any path taken along some copy of $T_p$, there are either $p$ possibilities (for the first step) or $p-1$ possibilities (for any subsequent step).  As such, we have 
\begin{align} \binom{i+q-1}{q}(p-1)^i\leq |\dist^{-1}_x(i)|\leq \binom{i+q-1}{q}p^q(p-1)^{i-q}.\end{align} 
It follows that
\begin{align}\log(|\dist^{-1}_x(i)|)=i\log(p-1) + O(1).\end{align}

Observe that $q$ is the maximum number of neighbors that $y\in \dist^{-1}_x(i)$ has in $\dist^{-1}_x(i-1).$  The difference between the bound from Theorem \ref{thm:vg} of $i\log \parens{\frac{p(q+1)}{2}-1}$ and the actual value $i\log (p)$ results from the value of $q$.  As discussed before, the reason for this is that in the proof of Theorem \ref{thm:vg}, the upper bound assumes as a worst-case scenario that every vertex in $\dist^{-1}_x(i)$ has either $1$ or $2$ neighbors in $\dist^{-1}_x(i-1)$.  But in fact, as $i$ grows almost every vertex in the $i$-shell of $T_p^q$ has $q$ neighbors in the $(i-1)$-shell. 
\end{example}

{\it We conjecture here that $T_p^q$ actually experiences the maximum volume growth for their curvature and regularity.}
\begin{conjecture}Let $G$ be a $pq$-regular graph so that if $u,v\in V(G)$, then $\kappa(u,v)\geq \frac{2-p}{pq}$.  Let $x\in V(G)$ and $y\in V(T_p^q)$.  Then for any $i\geq 0$,  one has 
\[|\dist^{-1}_x(i)|\leq |\dist^{-1}_y(i)|.\]\end{conjecture}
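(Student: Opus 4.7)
The plan is to strengthen the shell-growth bounds of Theorems \ref{thm:vg1} and \ref{thm:vg_bipartite} by tracking not just the cardinality $|S_i|$ but the finer statistic
$m_j(i) := \abs{\{z \in S_i(x) : z \text{ has exactly } j \text{ neighbors in } S_{i-1}(x)\}}$.
In $T_p^q$ this admits an explicit formula
$m_j^{T_p^q}(i) = \binom{q}{j}\binom{i-1}{j-1}p^j(p-1)^{i-j}$,
so the conjecture would follow from the stronger inductive claim that $m_j^G(i) \le m_j^{T_p^q}(i)$ for each $i$ and $j$, or at worst a stochastic domination $\sum_{j' \ge j} m_{j'}^G(i) \le \sum_{j' \ge j} m_{j'}^{T_p^q}(i)$.

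The motivation comes from identifying where Theorem \ref{thm:vg_bipartite} loses tightness on $T_p^q$: the bound $|U(z)| \le 1-dk = p-1$ is saturated only by ``axis'' vertices (those with exactly one positive coordinate) and is zero for every other type. The next step, therefore, is to prove a type-dependent Kantorovich bound asserting that $|U(z)|$ is controlled by $j(z)$, the number of back-neighbors of $z$. The strategy is to run the test-function argument using simultaneously all $j(z)$ parents of $z$: a single $1$-Lipschitz $f$ supported on the two-step neighborhood of $z$ must satisfy $\sum_v f(v)(\mu_z(v) - \mu_{y_\ell}(v)) \le 1 - k$ for every $\ell = 1, \dots, j(z)$, and averaging these constraints (or solving the resulting linear program) should force $|U(z)|$ to shrink with $j(z)$, matching the tree-product behavior where $|U(z)| > 0$ only when $j(z) = 1$.

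With such a refined local bound in hand, the induction proceeds shell by shell, converting a type-distribution bound at level $i$ into a type-distribution bound at level $i+1$. One also needs a complementary argument bounding $|W(z)|$, the set of multi-parent children of $z$, since these contribute to the larger values of $j(z')$ at level $i+1$; the natural tool is a symmetric Kantorovich test function that accounts for several parents of the child rather than of $z$. A careful bookkeeping should then reproduce the explicit formula for $m_j^{T_p^q}(i+1)$ as an upper bound.

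The principal obstacle is that the Kantorovich argument is intrinsically local---it only sees the $2$-ball around $z$---whereas the extremality of $T_p^q$ appears to reflect a global absence of short cycles, which the curvature bound between close vertices does not directly enforce. It is entirely possible that the conjecture fails without a supplementary hypothesis such as a girth bound, or that a fundamentally different global argument (for example via universal covers, or via a transportation comparison between the iterated lazy random-walk measures on $G$ and on $T_p^q$) is required. A secondary difficulty is that the proposed induction compares distributions rather than totals, so one must propagate a majorization-type inequality through the refined recursion, which is strictly more delicate than the scalar recursion of Theorem \ref{thm:vg}.
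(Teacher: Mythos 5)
The statement you are asked to prove is presented in the paper as a \emph{conjecture}; the authors offer no proof, only the heuristic that $T_p^q$ should play the role of the constant-curvature comparison space in a Bishop-type theorem. So there is no paper argument to compare yours against, and any attempt must be judged on its own terms.

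On those terms, what you have written is a research plan rather than a proof, and the gap is exactly where you locate it. Your refined bookkeeping is sound: the formula $m_j^{T_p^q}(i)=\binom{q}{j}\binom{i-1}{j-1}p^j(p-1)^{i-j}$ is correct (choose $j$ of the $q$ coordinates to be active, compose $i$ into $j$ positive parts, and count $p(p-1)^{d_\ell-1}$ paths along each active coordinate), and you correctly observe that in $T_p^q$ the quantity $|U(z)|$ from the proof of Theorem \ref{thm:vg_bipartite} vanishes unless $z$ has a single back-neighbor. But the entire load of the argument rests on the proposed ``type-dependent Kantorovich bound,'' a claim that $|U(z)|$ must shrink (ideally to zero) as $j(z)$ grows, and you give no proof of it. Averaging the $j(z)$ constraints $\sum_v f(v)(\mu_z(v)-\mu_{y_\ell}(v))\le 1-k$ over a single test function $f$ does not obviously produce a $j$-dependent bound: each constraint is already valid for every $1$-Lipschitz $f$, and it is not clear that any local $f$ extracts more from the conjunction than from a single parent. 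Indeed, the Kantorovich argument only sees a $2$-ball around $z$, and the curvature hypothesis $\kappa\ge (2-p)/(pq)$ is consistent with a graph having short even cycles that make $|U(z)|$ large for a multi-parent vertex while $T_p^q$ has none; you yourself flag that without something like a girth or cover-space hypothesis the local argument may simply not distinguish $G$ from $T_p^q$. Even granting such a lemma, the proposed induction transfers a majorization of the type distribution $(m_j(i))_j$ to level $i+1$, and you have not written down the transfer step; a vertex $z$ at level $i$ contributes to several $m_{j'}(i+1)$ buckets depending on whether its forward neighbors lie in $U(z)$ or $W(z)$, and one must verify that the $T_p^q$ transition matrix majorizes the worst case of $G$'s, which is strictly harder than the scalar recursion $|S_{i+1}|\le \frac{d(1-k)}{2}|S_i|$ that the paper actually proves. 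In short: the strategy is plausible and your diagnosis of the difficulties is accurate, but the central lemma is missing and there is real doubt (which you acknowledge) that it is even true at the stated level of generality.
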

Qualitatively, the graph $T_p^q$ is conjectured to fill the same role that the space of constant curvature does in the Bishop Volume Comparison Theorem.  A case of this conjecture is that the $d$-dimensional lattice $T_2^d$ is conjectured to have the fastest volume growth for any $2d$-regular graph with curvature lower bound $0$.  If correct, this would prove that any such graph has polynomial volume growth.
    
\section{Volume Growth and Spectral Estimates in Graphs}
\label{sect:Spectrum}
    
    In this section we follow the methods from the continuous setting that were developed by B. Benson \cite{B15} and discussed in Section \ref{sect:vg_lambda}.  First, we demonstrate an upper bound for an eigenvalue $\lambda_k(G)$ by taking the Rayleigh quotient of a function based only on distance from a cut-set $\Sigma$.  Next, we optimize that quotient by treating it as a discrete Hardy-type inequality.  

\begin{remark}  In applying our results using volume growth to bound the spectrum, we will use the relationship between notions of curvature of the graph and volume growth, as introduced and referenced in the previous section. The bounds which illustrate this relationship are the only point in our analysis that relies on the discrete curvature.  Given another volume growth result (either based on another notion of discrete curvature or unrelated to curvature), it will be possible to repeat the analysis we present here and achieve similar results.\end{remark}

\subsection{Bounding eigenvalues using volume growth}

In this section, we will establish bounds for the spectrum of the graph Laplacian using bounds on volume growth. 

Our methods in this section for approximating $\lambda_k$, where $k\geq 2$, do not make any assumption about the cut-set, but the bounds we obtain will only be in terms of the generic volume growth bounds $\mu,\nu$.  Later, we will give a bound for $\lambda_2$ with the assumption that $\Sigma$ is the outer vertex isoperimetric optimizing cut-set.

Let $G = (V,E)$ be a graph. Let $\Sigma\subset V(G)$ be a cut-set that separates $V\setminus\Sigma$ into $V^+$ and $V^-$.  Note that under this definition, it is possible that $V^+$ or $V^-$ is empty. The signed distance function $\dist_{\Sigma}:V\to\Z$ is defined so that $|\dist_{\Sigma}(v)| = \text{min}_{x\in \Sigma} \dist_G(x,v)$ where $\dist_G$ is the graph distance, and the sign of $\dist_{\Sigma}$ is positive on $V^+$ and negative on $V^-$. 

We will assume that we have volume growth and decay bounds for the level sets of $\dist_{\Sigma}.$ Specifically, let $\nu(k)$ denote a volume growth bound and $\mu (k)$ denote a uniform volume decay bound respectively. Here, for $k \in \Z,$ the bounds $\nu (k)$ and $\mu(k)$ have the property that 
\begin{equation}\label{eq:GrowthBound}
    |\Sigma|\mu (k)  \leq |\dist^{-1}_{\Sigma}(k)|\leq |\Sigma| \nu (k).
	\end{equation}
\begin{definition}\label{def:spaces} Define $T^+ \in \Z_{>0}$ so that $\mu (k)>0$ for all $0 \leq k \leq T^+$ and define $T^- \in \Z_{<0}$ so that $\mu (k) >0$ for all $T^- \leq k \leq 0$.

We denote by $U^+$ the set of all functions $f^+:V^+\to \R$ and $U^- = \{f^-:V^-\to \R\}$.  The inner products for $U^+$ and $U^-$ will be inherited from the inner product on $V$ by letting $f^+(v)=0$ for every $v \in V\setminus V^+$ and $f^-(v)=0$ for every $v \in V\setminus V^-,$ i.e. extending the functions on $U^+$ and $U^-$ to all of $V$ by zero, then taking the inner product on all of $V.$

We also define a pair of corresponding spaces of functions on $\{0,\dots, T^+\}$ and $\{T^-, T^-+1, \ldots,-1,0\}$: let $W^+$ be the space of functions $g^+:\{0,1,2,\ldots , T^+\} \to \R$ such that $g^+(0)=0$, and $W^-$ be the space of functions $g^-:\{T^-, T^-+1, \ldots,-1,0\} \to \R$ such that $g^-(0)=0.$ Further, for $u^+,v^+ \in W^+,$ define 
    $$\langle u^+,v^+\rangle_+=\sum_{i=0}^{T^+} u(i)v(i).$$
Similarly, for $u^-,v^- \in W^-,$ define 
    $$\langle u^-,v^-\rangle_-=\sum_{i=T^-}^{0} u(i)v(i).$$
\end{definition}
    
To estimate $\lambda_2(G),$ we will be interested in (the smallest positive) solutions $\rho^{+} \in \R^{T^{+}}$ and $\rho^- \in \R^{T^-}$ which satisfy the respective equations 
\begin{align}\label{eq:Hardy1}
\sum_{i=0}^{T^{+}} \phi(i)^2 \cdot \parens{\nu(i)+\nu(i-1)} &\leq
2\rho^{+} \sum_{k=0}^{T^{+}}\biggl [ \sum_{i=0}^k \phi(i) \biggr ]^2 \cdot \mu(k),\\
\label{eq:Hardy2}
\sum_{i=T^-}^0 \phi (i)^2 \cdot \parens{\nu(i)+\nu(i-1)} &\leq 2\rho^- \sum_{k=T^-}^0  
\biggl [ \sum_{i=k}^0 \phi(i) \biggr ]^2 \cdot \mu(k)
\end{align}
with $\phi(0)=0,$ $\phi \not\equiv 0$ and where $\nu$ and $\mu$ are the volume growth bounds defined in Equation \ref{eq:GrowthBound}. Equations of this form are called weighted discrete Hardy inequalities. For a fuller discussion of this topic, we refer to \cite{Miclo}.

\begin{theorem}\label{theo:eigen2}
	Let $\rho^+$ and $\rho^-$ be defined by Equations \ref{eq:Hardy1} and \ref{eq:Hardy2}. Then $\lambda_2(G) \leq \max \{\rho^+, \rho^-\}.$
\end{theorem}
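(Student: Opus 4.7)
The plan is to use the variational (min-max) characterization of $\lambda_2(G)$ with a $2$-dimensional space of test functions, following the Buser--Agol--Benson scheme but adapted to graphs. Specifically, for any $g^+ \in W^+$ and $g^- \in W^-$, define $f^+ \colon V \to \R$ by $f^+(v) = g^+(\dist_\Sigma(v))$ on $V^+ \cup \Sigma$ and zero elsewhere, and analogously $f^-$ on $V^- \cup \Sigma$; both vanish on $\Sigma$ because $g^\pm(0) = 0$. Since $\Sigma$ separates $V^+$ from $V^-$, the supports of $f^+$ and $f^-$ meet only on $\Sigma$ where both vanish, so $\langle f^+, f^-\rangle = 0$. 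Moreover, for any $v \in V^-$ every neighbor of $v$ lies in $V^- \cup \Sigma$ where $f^+$ is zero, so $\Delta f^+(v) = 0$ at every such $v$, giving $\langle \Delta f^+, f^-\rangle = 0$ as well. This double orthogonality implies that on the span of $\{f^+, f^-\}$ the Rayleigh quotient is a convex combination of $R(f^+)$ and $R(f^-)$, hence $R(af^+ + bf^-) \leq \max\bigl(R(f^+), R(f^-)\bigr)$; by the min-max principle applied to this $2$-dimensional subspace, $\lambda_2(G) \leq \max\bigl(R(f^+), R(f^-)\bigr)$.

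Next I would rewrite $R(f^+)$ as a one-dimensional ratio. Using $\langle \Delta f, f\rangle = \tfrac{1}{d}\sum_{\{x,y\}\in E}(f(x)-f(y))^2$ and the fact that $f^+$ is constant on each shell $S_i := \dist^{-1}_\Sigma(i)$, only edges between consecutive shells contribute. Setting $\phi(i) = g^+(i) - g^+(i-1)$ with $\phi(0) = 0$, so that $g^+(k) = \sum_{i=0}^k \phi(i)$, one obtains
\[
R(f^+) \;=\; \frac{\tfrac{1}{d}\sum_{i=1}^{T^+} \phi(i)^2\, e_{i-1}^+}{\sum_{k=0}^{T^+} g^+(k)^2\, |S_k|},
\]
where $e_{i-1}^+$ is the number of edges between $S_{i-1}$ and $S_i$ on the positive side. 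Each vertex in a shell has at most $d$ neighbors, so $e_{i-1}^+ \leq d \min(|S_{i-1}|, |S_i|) \leq \tfrac{d}{2}(|S_{i-1}| + |S_i|) \leq \tfrac{d}{2}|\Sigma|\bigl(\nu(i-1) + \nu(i)\bigr)$, while the denominator is bounded below by $|\Sigma|\sum_k g^+(k)^2 \mu(k)$. The factors of $d$ and $|\Sigma|$ cancel, leaving exactly the ratio appearing in the weighted Hardy inequality \ref{eq:Hardy1}; hence by the definition of $\rho^+$ we conclude $R(f^+) \leq \rho^+$. The mirror-image argument on the negative side gives $R(f^-) \leq \rho^-$, and substituting into the min-max bound above yields $\lambda_2(G) \leq \max\{\rho^+, \rho^-\}$.

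The principal difficulty I anticipate is identifying the edge-counting estimate $e_{i-1}^+ \leq \tfrac{d}{2}(|S_{i-1}| + |S_i|)$ that precisely matches the weight $\nu(i) + \nu(i-1)$ on the left-hand side of \ref{eq:Hardy1}; the arithmetic--geometric observation $\min(a,b) \leq \tfrac{1}{2}(a+b)$ is exactly what is required for the clean identification. A secondary technical point is the degenerate cases where $V^+$ or $V^-$ is empty: only one sided test function is then available, and one must replace the span by $\mathrm{span}(\mathbf{1}, f^\pm)$ and orthogonalize against the constant before applying the Rayleigh estimate. With these accounted for, the argument is a direct discrete analog of Buser's test-function method as recalled in Section~\ref{sect:vg_lambda}.
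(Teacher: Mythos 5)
Your proposal is essentially correct and follows the same overall strategy as the paper: split the graph along $\Sigma$ into a positive and negative side, test the Rayleigh quotient with functions that are constant on the shells $\dist_\Sigma^{-1}(i)$, bound numerator and denominator via the shell volume bounds $\nu,\mu$, and rewrite in terms of $\phi(i) = g(i)-g(i-1)$ to land on the weighted Hardy quotient. The main cosmetic difference is how you justify $\lambda_2(G)\le\max\{R(f^+),R(f^-)\}$: the paper invokes the Poincar\'e/Dirichlet-eigenvalue decomposition $\lambda_{k+l}(G)\le\max\{\xi_k(V_{\ge 0}),\xi_l(V_{\le 0})\}$ (citing \cite{Buser,B15,Balti}), whereas you make the $k=l=1$ instance explicit and self-contained via the double orthogonality $\langle f^+,f^-\rangle=\langle\Delta f^+,f^-\rangle = 0$ and the weighted-average observation for $R$ on the two-dimensional span; both are correct, and your version is arguably cleaner for the $\lambda_2$ case (it is also essentially what the cited Proposition 2.1 of \cite{B15} proves). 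A second small difference is that you bound the Dirichlet energy by counting cross edges $e_{i-1,i}^+\le d\min(|S_{i-1}|,|S_i|)\le \tfrac{d}{2}(|S_{i-1}|+|S_i|)$, while the paper sums over vertices $x\in S_i$ and bounds $\sum_{y\sim x}(g(i)-g(y))^2\le d[\phi(i)^2+\phi(i+1)^2]$; after reindexing, the two yield the identical estimate $\tfrac12\sum_i\phi(i)^2\bigl(|S_{i-1}|+|S_i|\bigr)$.

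One small point of bookkeeping: you write ``for any $g^+\in W^+$ \ldots hence $R(f^+)\le\rho^+$,'' but since $\rho^+$ is the infimum of the Hardy quotient over $\phi$ (the ``smallest positive solution''), the displayed chain $R(f^+)\le \tfrac{\mathrm{LHS}(\phi)}{2\,\mathrm{RHS}(\phi)}$ only gives $\inf_{g^+}R(f^+)\le\rho^+$, so one should either take the infimum over $g^+$ (and $g^-$) at the end, or instantiate $g^+$ as the Hardy minimizer before applying the two-dimensional min-max bound. With that adjustment the argument is complete, and your remark about orthogonalizing against the constant in the degenerate case $V^\pm=\emptyset$ is a reasonable way to patch a corner case the paper glosses over.
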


Before proving the theorem, we formulate the results for the higher eigenvalues.
To estimate the higher eigenvalues, we define a symmetric, tridiagonal matrix $A^+$ indexed by $\{1,\dots, T^+\}$ so that 
\begin{align*}
A^+_{ij} = \begin{cases}
\tfrac{2\nu(i) + \nu(i-1)+\nu(i+1)}{\mu(i)}, &\text{ if }i=j;~ 1\leq i<T^+\\
\tfrac{\nu(T^+)+\nu(T^+-1)}{\mu(T^+)},&\text{ if }i = j = T^+\\
\tfrac{-\nu(i)-\nu(j)}{\sqrt{\mu(i)\mu(j)}},&\text{ if }|i-j| = 1;~ 1\leq i,j\leq T^+\\
0, &\text{otherwise}
\end{cases}
\end{align*}
Similarly, we define the symmetric, tridiagonal matrix $A^-$ indexed by $\{T^-, T^-+1, \ldots , -2, -1\}$:
\begin{align*}
A^-_{ij} = \begin{cases}
\tfrac{2\nu(i) + \nu(i-1)+\nu(i+1)}{\mu(i)}, &\text{ if }i=j; T^-<i\leq -1\\
\tfrac{\nu(T^-)+\nu(T^-+1)}{\mu(T^-)}, &\text{ if }i = j = T^-\\
\tfrac{-\nu(i)-\nu(j)}{\sqrt{\mu(i)\mu(j)}},&\text{ if }|i-j| = 1; T^- \leq i,j\leq -1\\
0, &\text{otherwise.}
\end{cases}
\end{align*}

\begin{theorem}\label{theo:eigenhigher}
	For a graph $G$ and any $k,l \in \N$ with $1 \leq k,l \leq \min \{\left |T^-\right |, T^+\}=:T,$ then  
\begin{equation}\label{eq:eigenhigher}
        \lambda_{k+l} (G) \leq \frac{1}{2}\max \left \{ \rho_k^+
        	, \rho_l^- \right \}
        \end{equation}
    where $\rho^+_k$ and $\rho^-_l$ are the $k$-th and $l$-th non-trivial eigenvalues of the respective equations $A^+g^+=\rho^+g^+$ with $g^+ \in W^+$ and $A^-g^-=\rho^-g^-$ with $g^- \in W^-.$ 
    
    In particular, we have that 
    	\begin{equation*}
        	\lambda_j(G) \leq \frac{1}{2}\min_{j=k+l}
            \min_{\substack{j\leq t^++|t^-|\leq 2T\\t^+,-t^- \geq 1}}
            \max \left \{ 
            \rho^+_k(t^+), 
            \rho_l^-(|t^-|)
            \right \}
        \end{equation*}
where $\rho^+_k(t^+)$ and $\rho_l^-(|t^-|)$ are the eigenvalues of the symmetric, tridiagonal matrices $A^+(t^+)$ and $A^-(t^-)$ resulting from replacing $T^+$ with $t^+$ and $T^-$ with $t^-$ in the definitions of $A^+$ and $A^-,$ respectively.
\end{theorem}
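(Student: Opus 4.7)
My plan is to follow the continuous template outlined in Section~\ref{sect:vg_lambda}: use the Courant-Fischer minimax principle to build a $(k+l)$-dimensional test subspace of functions on $V(G)$ consisting of radial functions on each side of the cut-set $\Sigma$. Specifically, I would take $k$ linearly independent functions on $V^+\cup\Sigma$ of the form $f^+=g^+\circ\dist_\Sigma$ with $g^+(0)=0$ and support in $\{0,\ldots,T^+\}$, together with $l$ analogous functions on $V^-\cup\Sigma$ with support in $\{T^-,\ldots,0\}$, and extend all by zero. Since $\Sigma$ separates $V^+$ from $V^-$, the Dirichlet energy and $L^2$-norm of $f=f^++f^-$ split cleanly across the two sides, so the Rayleigh quotient satisfies $R(f)\leq\max(R(f^+),R(f^-))$. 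This reduces the problem to separately upper-bounding the $k$-th Rayleigh eigenvalue restricted to the space $U^+$ of admissible radial functions on $V^+\cup\Sigma$, and the analogous quantity on $U^-$.

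My next step is to substitute a radial test function into the Rayleigh quotient. Because $f=g\circ\dist_\Sigma$ is constant on each shell $S_i:=\dist_\Sigma^{-1}(i)$, only edges between consecutive shells contribute to the Dirichlet energy, giving
\begin{equation*}
R(f)=\frac{(1/d)\sum_i e(S_i,S_{i+1})(g(i+1)-g(i))^2}{\sum_i |S_i|\,g(i)^2}.
\end{equation*}
I would then apply the shell bounds from Equation~\ref{eq:GrowthBound} together with the trivial edge bound $e(S_i,S_{i+1})\leq d\min(|S_i|,|S_{i+1}|)\leq \tfrac{d|\Sigma|}{2}(\nu(i)+\nu(i+1))$ to obtain $R(f)\leq \tfrac{1}{2}\,Q(g)/D(g)$, where $Q(g)=\sum_{i=0}^{T^+-1}(\nu(i)+\nu(i+1))(g(i+1)-g(i))^2$ and $D(g)=\sum_{i=1}^{T^+}\mu(i)g(i)^2$.

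The crux is then to make the change of variable $h(i)=g(i)\sqrt{\mu(i)}$ and verify that $Q(g)=\langle A^+h,h\rangle$ and $D(g)=\|h\|^2$. Expanding $(g(i+1)-g(i))^2$ and collecting coefficients of $g(j)^2$ in $Q(g)$ yields $2\nu(j)+\nu(j-1)+\nu(j+1)$ for $1\leq j<T^+$ and exactly $\nu(T^+)+\nu(T^+-1)$ at the truncated endpoint $j=T^+$, matching the diagonal entries of $A^+$; the cross terms likewise match the off-diagonals after the normalization by $\sqrt{\mu(i)\mu(i+1)}$. Applying the minimax principle to $A^+$ then shows that the $k$-th Rayleigh eigenvalue on $U^+$ is bounded by $\rho_k^+/2$, and a symmetric argument on $U^-$ yields $\rho_l^-/2$, proving the first inequality of the theorem. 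For the second, optimized inequality, I would observe that for any admissible $t^+\leq T^+$ and $T^-\leq t^-\leq -1$ with $k\leq t^+$ and $l\leq|t^-|$, restricting the radial supports to $\{0,\ldots,t^+\}$ and $\{t^-,\ldots,0\}$ still defines a valid $(k+l)$-dimensional test subspace, giving the same bound with $A^+(t^+),A^-(t^-)$ in place of $A^+,A^-$, and we may minimize over all such truncations.

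The main obstacle I foresee is the bookkeeping at the truncated endpoint $j=T^+$: the matrix $A^+$ has a strictly smaller diagonal entry there than the bulk formula would predict, and matching it exactly with $Q(g)$ requires either taking $T^+$ maximal so that $S_{T^++1}=\varnothing$ (and hence no edges escape), or carefully absorbing any residual edge-energy from $S_{T^+}$ to vertices beyond into the bound. Once this boundary step is clarified, the rest of the argument is a routine combination of the minimax principle with the explicit algebraic identification of $Q(g)$ as the quadratic form of the tridiagonal matrix $A^+$.
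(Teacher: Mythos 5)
Your proposal is correct and takes essentially the same route as the paper: Poincar\'e minimax to split $\lambda_{k+l}$ across the two sides of $\Sigma$, restriction to radial test functions, shell-volume bounds to pass to a one-dimensional Rayleigh quotient, the change of variable $h(i)=g(i)\sqrt{\mu(i)}$ to identify the quadratic form with the tridiagonal matrix $A^\pm$, and minimax again on $A^\pm$. Your edge-count bound $e(S_i,S_{i+1})\leq d\min(|S_i|,|S_{i+1}|)\leq \tfrac{d}{2}(|S_i|+|S_{i+1}|)$ is just a repackaging of the paper's per-vertex estimate and yields the identical form $\sum_{i=0}^{T^+-1}(\nu(i)+\nu(i+1))(g(i+1)-g(i))^2$. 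The endpoint issue you flag is resolved in the paper exactly the way you suggest: the admissible test functions are declared constant for indices beyond $T^+$ (and below $T^-$), so $g(T^++1)=g(T^+)$ kills the boundary cross term and the reduced diagonal entry $\frac{\nu(T^+)+\nu(T^+-1)}{\mu(T^+)}$ falls out automatically; with that pinned down your argument is complete.
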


\begin{remark}
Broadly speaking, we are using estimates of volume growth and decay which act as weights and linearize the graph  Laplacian eigenvalue problem on the graph. The main idea is to linearize the graph using the weights $\nu$ and $\mu.$ This can be done in multiple ways, however, we choose to do it in a way which is based around a minimizing vertex cut for $h_{out}(G),$ as we find it to be a natural way to produce Buser-type inequalities for both $\lambda_2(G)$ as well as the higher eigenvalues and higher Cheeger constants. In this form, we show that the higher eigenvalues are bounded above by eigenvalues of tridiagonal matrices, which in some cases, are known in closed form \cite{KST99}.
\end{remark}

  We will now prove both Theorems \ref{theo:eigen2} and  \ref{theo:eigenhigher} simultaneously.

\begin{proof}[Proof of Theorems \ref{theo:eigen2} and \ref{theo:eigenhigher}]
Using the Poincar\'e minimax principle for characterization of eigenvalues, we see that
\begin{equation}\label{eq:Ray1}
	\lambda_k (G)= \inf_{U_k}\sup_{f\in U_k}
	\frac{\langle f,\Delta f\rangle }{\langle f,f\rangle },
\end{equation} 
where $U_k$ is the set of $k$-dimensional subspaces of functions $f \in \R^V.$ Expanding these inner products, we find that
	\begin{align*}
\langle f,\Delta f\rangle &= 
\sum_x f(x)\sum_{y\sim x}\tfrac{1}{d}\parens{f(x)-f(y)}\\ 
	&= \sum_{\{x,y\}:x\sim y}\tfrac{1}{d}\parens{f(x)-f(y)}^2\\ 
    &= \sum_x\tfrac{1}{2d}\sum_{y\sim x}\parens{f(x)-f(y)}^2, \text{ and}\\
\langle f,f\rangle &= \sum_x f^2(x).
\end{align*}

Define $\Sigma \subset V$ so that $h_{out}(G)=|\Sigma|/|A|$ where $A\subset V,$ with $\Sigma=\partial_{out} A,$ and $|A|\leq |V|/2.$ Use the signed distance from $\Sigma,$ with positive distance into $A,$ to define the following vertex subsets:
    \begin{equation*}
        V_{\geq a}:=\bigcup_{n \in \N: n\geq a} \dist^{-1}_{\Sigma}(n) \text{ and }
        V_{\leq a}:=\bigcup_{n \in \N: n\leq a} \dist^{-1}_{\Sigma}(n).    
    \end{equation*}
   We wish to estimate the eigenvalue $\lambda_j (G)$ of the Laplacian on $G$ by the eigenvalues of the Laplacian on the subgraphs on $V_{\geq 0}$ and $V_{\leq 0}$, wherein the test functions for the Rayleigh quotients  $f^+:V_{\geq }\to \R$ satisfy $f^+(v)=0$ for every $v\in \dist_{\Sigma}^{-1}(0)=\Sigma$, and $f^-:V_{\leq 0} \to \R$ satisfy $f^-(v)=0$ for every $v \in \Sigma$.  Denote the eigenvalues determined by the Rayleigh quotients of these specific test functions by $\xi_k(V_{\geq 0})$ and $\xi_l(V_{\leq 0})$, respectively. Using the Poincar\'e minimax principle it is possible to see that when $1 \leq k,l\leq \min \left \{ |V^-|,|V^+|\right \},$ it follows that 
	\begin{equation}\label{eq:Poincare}
    \lambda_{k+l} (G) \leq 
    	\max \left \{ \xi_k(V_{\geq 0}), \xi_l(V_{\leq 0}) \right \}.
	\end{equation}
This can be seen by discretizing Theorem 8.2.1 found in \cite{Buser} or Proposition 2.1 of \cite{B15}. Such an argument is given in detail by Balti for weighted directed graphs, where the result is also extended in several ways, including to the special Laplacian operator on these graphs \cite[Section 5]{Balti}.

To give an upper bound for the eigenvalue, we restrict the test functions for Equation \ref{eq:Ray1} in $\R^V$ to functions which 
	\begin{enumerate}
		\item vanish on either $V_{\geq 0}$ or $V_{\leq 0},$ 
        \item are constant on each set $\dist^{-1}_{\Sigma}(i),$ 
        \item and are constant for values less than or equal to $T^-$ and greater than or equal to $T^+.$ 
	\end{enumerate}
Recall the definitions of the spaces of functions $U^{+}, U^{-},$ and $W^+, W^-$ from Definition \ref{def:spaces}. We will first treat these test functions as functions in $U^+$ or $U^-,$ respectively, and then as functions in $W^+$ or $W^-$. Let $U^+_k$ and $U^-_l$ be, respectively, arbitrary sets of $k$- and $l$-dimensional subspaces of real-valued functions of $U^+$ and $U^-$, for values $k,l \in \Z_{\geq 0}.$  Similarly, $W^+_k$ and $W^-_l$ are $k$- and $l$-dimensional subspaces of $W^+$ and $W^-$, respectively.  

Combining Equation \ref{eq:Poincare} with the Poincar\'e minimax characterization for the Dirichlet eigenvalues $\xi_k(V_{\geq 0})$ and $\xi_l(V_{\leq 0})$ while maintaining the assumption that $1\leq k,l\leq \min\{|V^-|,|V^+|\}$ we have that 
\begin{equation}
\label{eq:Ray_higher}
	\lambda_{k+l}(G) \leq \max \left \{\inf_{U_k^+}\sup_{f^+\in U_k^+}
	\frac{\langle f^+,\Delta f^+\rangle }{\langle f^+,f^+\rangle }, 
    \inf_{U_l^-}\sup_{f^-\in U_l^-}
	\frac{\langle f^-,\Delta f^-\rangle }{\langle f^-,f^-\rangle } \right \}.
\end{equation}

Now, for $g$ defined on $V_{\geq 0},$ with constant value $g(i)$ on the $i$-shell, using the volume growth estimates from Equation \ref{eq:GrowthBound}, we have the estimates 
	\begin{align*}
    	\langle g,\Delta g\rangle &= \sum_{i=0}^{T^+}\sum_{x\in \dist^{-1}_{\Sigma}(i)} \sum_{y\sim x}\frac{1}{2d}\parens{g(i)-g(y)}^2\\ &\leq \frac{1}{2}\sum_{i=0}^{T^+}\sum_{x \in \dist_{\Sigma}^{-1}(i)}\biggl[ \parens{g(i)-g(i+1)}^2 + \parens{g(i)-g(i-1)}^2\biggr]\\ &=\frac{1}{2}\sum_{i=0}^{T^+}|\dist^{-1}_{\Sigma}(i)|\biggl[ \parens{g(i)-g(i+1)}^2 + \parens{g(i)-g(i-1)}^2\biggr]\\
        &\leq \frac{1}{2}\sum_{i=0}^{T^+}\nu(i)|\Sigma|\biggl[ \parens{g(i)-g(i+1)}^2 + \parens{g(i)-g(i-1)}^2\biggr]\end{align*}
and \begin{align*}\langle g,g\rangle &= \sum_{i=0}^{T^+}|\dist^{-1}_{\Sigma}(i)|g^2(i)\\
	& \geq \sum_{i=0}^{T^+}\mu(i)|\Sigma| g^2(i).
	\end{align*}
Similar estimates hold for a function $g$ defined on $V_{\leq 0}.$

We now use these bounds in Equation \ref{eq:Ray_higher}.  Because we are restricting to functions $g$ with a constant value in each shell, it is equivalent to write the expression in terms of $W^+$ and $W^-$, rather than $U^+$ and $U^-$.
\begin{equation}\label{eq:Ray3}
\begin{split}
\lambda_{k+l} (G) &\leq \max \left \{ \inf_{W_k^+}\sup_{g\in W_k^+}\frac{\sum_{i=0}^{T^+}\biggl[ \parens{g(i)-g(i+1)}^2 + \parens{g(i)-g(i-1)}^2\biggr]\cdot \nu(i)}{2\sum_{i=0}^{T^+}g^2(i)\cdot \mu(i)}, \right.\\
&\qquad \qquad \left. \inf_{W_l^-}\sup_{g\in W_l^-}\frac{\sum_{i=T^-}^{0}\biggl[ \parens{g(i)-g(i+1)}^2 + \parens{g(i)-g(i-1)}^2\biggr]\cdot \nu(i)}{2\sum_{i=T^-}^{0}g^2(i)\cdot \mu(i)}
\right \}.
\end{split}
\end{equation}
Note that $|\Sigma|,$ a common factor appearing in the numerator and denominator, has been eliminated in the resulting estimates in Equation \ref{eq:Ray3}. 

For estimating $\lambda_2(G),$ we take $k=l=1$ and the Rayleigh quotient for $W^+$ in Equation \ref{eq:Ray3} becomes
	\begin{equation}\label{eq:HardyRay}
    	\inf_{g \in W^+, g\not\equiv 0} \, \frac{\sum_{i=0}^{T^+}\biggl[ \parens{g(i)-g(i+1)}^2 + \parens{g(i)-g(i-1)}^2\biggr]\cdot \nu(i)}{2\sum_{i=0}^{T^+}g^2(i)\cdot \mu(i)}. 
	\end{equation}
Define $\phi(j):=g(j)-g(j-1)$ for the $W^+$ quotient in Equation \ref{eq:Ray3}. Noting that $g(j)=\sum_{i=0}^j \phi(i)$ and using the facts that $g(0)=0,$ $\phi(0)=0,$ and $g(i)$ is constant for all $i \geq T^+$ implies $\phi(i)=0$ for all $i > T^+.$ As a result, Equation \ref{eq:HardyRay} becomes 
    \begin{equation*}
        \inf_{g \in W^+, g\not\equiv 0} \,
        \frac{\sum_{i=0}^{T^+}[\phi(i+1)^2+\phi(i)^2]\nu(i)}{2\sum_{k=0}^{T^+}\left [\sum_{i=0}^k \phi(i) \right ]^2\mu(i)}
        =\inf_{g \in W^+, g\not\equiv 0} \,
        \frac{\sum_{i=0}^{T^+}\phi(i)^2[\nu(i)+\nu(i-1)]}{2\sum_{k=0}^{T^+}\left [\sum_{i=0}^k \phi(i) \right ]^2\mu(i)}
    \end{equation*}
In the rightmost equality, we have used that $\phi(T^++1)=g(T^++1)-g(T^+)=0.$

It follows from the definition of $\phi$ and a routine computation that the quotient in Equation \ref{eq:HardyRay} is bounded from above by $\rho^+$ in Equation \ref{eq:Hardy2}. A similar argument verifies $\rho^-$ in Equation \ref{eq:Hardy2}. This establishes Theorem \ref{theo:eigen2}. \hfill $\Box$

\medskip

\textbf{Bounding the higher eigenvalues:}
We now continue the argument for higher eigenvalues. Since the test function $g$ can be thought of as a test function vanishing off of $V_{\geq 1},$ we wish to find a symmetric matrix $A^+$ and a column vector $\vw_+$ so that 
$$\langle \vw_+,A^+\vw_+ \rangle_+=\sum_{i=0}^{T^+}\left [(g(i)-g(i+1))^2+(g(i)-g(i-1))^2 \right ]\nu(i)$$ 
and 
$$\langle \vw_+,\vw_+ \rangle_+=\sum_{i=0}^{T^+} g(i)^2\mu(i).$$
    As a result, the quotient $\tfrac{\langle \vw_+,A^+\vw_+ \rangle_+}{2\langle \vw_+,\vw_+ \rangle_+}$ is equal to the eigenvalue estimate for $V_{\geq 0}$ in Equation \ref{eq:Ray3}.

Since $g(0)=0,$ we omit the $i=0$ entry in constructing the column vector $\vw_+,$  defining 
    $$\vw_+= \begin{bmatrix}g(1)\sqrt{\mu(1)}\\ \vdots\\ g(T^+)\sqrt{\mu(i)}\end{bmatrix}.$$
In other words, we let the $i$-th entry of $\vw_+$ to be $g(i)\sqrt{\mu(i)}.$    
We now wish to construct a symmetric matrix $A^+$ such that 
    \begin{equation}\label{eq:Aplus}
        \vw_+^{\intercal}A^+\vw_+=\langle \vw_+,A^+\vw_+\rangle_+
        =\sum_{i=0}^{T^+}[(g(i)-g(i+1))^2+(g(i)-g(i-1))^2]\nu(i),
    \end{equation}
where $\vw_+^{\intercal}$ is the transpose of $\vw_+$ when written as a column vector.

Now, the $i$-th term in the right hand side of Equation \ref{eq:Aplus} can be rewritten as 
    \begin{equation} \label{eq:gForm}
        [2g(i)^2+g(i-1)^2+g(i+1)^2-2g(i)(g(i-1)+g(i+1))]\nu(i).
    \end{equation}
It follows from Equation \ref{eq:gForm} that the entries of $A^+$ are quotients where the numerator can be expressed as linear combinations of the weights $\nu$ and the denominator of $A^+_{ij}$ is equal to $\sqrt{\mu(i)\mu(j)}.$ Since $A^+_{ij}$ and $A^+_{ji}$ correspond to the right hand side of Equation \ref{eq:Aplus}, the presence of terms of the form 
    $$-2g(i)g(i-1)\nu(i)=g(i)g(i-1)(-2\nu(i))$$
contribute an additive term $-\nu(i)$ to the numerator of each entry $A_{i-1,i}^+$ and $A_{i,i-1}^+$, where the factor of $\nu(i)$ has been halved due to the fact that we require $A^+_{i-1,i}=A^+_{i,i-1}.$ For the same reason, terms of the form 
    $$-2g(i)g(i+1)\nu(i)=g(i)g(i+1)(-2\nu(i))$$
contribute an additive term $-\nu(i)$ to the numerator of $A_{i,i+1}^+$ and $A_{i+1,i}^+.$ This implies that when $|i-j|=1,$ we have that $$A_{ij}^+=\frac{-\nu(i)-\nu(j)}{\sqrt{\mu(i)\mu(j)}}.$$
When $1\leq i<T^+,$ the terms 
    $$[2g(i)^2+g(i-1)^2+g(i+1)^2]\nu(i)$$
contribute $2\nu(i)$ to $A_{ii}^+$ and $\nu(i)$ to $A_{i-1,i-1}^+$ and $A_{i+1,i+1}^+,$ giving 
    $$A_{ii}^+=\frac{2\nu(i)+\nu(i-1)+\nu(i+1)}{\mu(i)}.$$
In other words, the numerator of the entry $A_{ii}^+$ contains the multiple of $g(i)^2$ in the right hand side of Equation \ref{eq:Aplus}, while the numerator of the entry $A_{ij}^+$ contains one half of the multiple of $g(i)g(j)=g(j)g(i)$ in the sum, since $A_{ij}^+=A_{ji}^+.$
Finally, the $T^+$-th term in the sum on right hand side of Equation \ref{eq:Aplus} can be written as 
    $$[g(T^+)^2-2g(T^+)g(T^+-1)+g(T^+-1)^2]\nu(T^+).$$
This contributes $-\nu(T^+)$ to the numerators of $A_{T^+,T^+}^+$ and $A_{T^+-1,T^+-1}^+.$ However, since $g(T^+)-g(T^++1)$ vanishes, we have that 
    $$A_{T^+,T^+}^+=\frac{\nu(T^+)+\nu(T^+-1)}{\mu(T^+)}.$$
Thus, we conclude that the desired symmetric matrix $A^+$ can be constructed as  
\begin{align*}
A^+_{ij} = \begin{cases}
\tfrac{2\nu(i) + \nu(i-1)+\nu(i+1)}{\mu(i)}, &\text{ if }i=j;~ 1\leq i<T^+\\
\tfrac{\nu(T^+)+\nu(T^+-1)}{\mu(T^+)},&\text{ if }i = j = T^+\\
\tfrac{-\nu(i)-\nu(j)}{\sqrt{\mu(i)\mu(j)}},&\text{ if }|i-j| = 1;~ 1\leq i,j\leq T^+\\
0, &\text{otherwise}
\end{cases}
\end{align*}
where $1\leq i,j\leq T^+.$

Since the test functions corresponding to the eigenvalues $\xi_l(V_{\leq 0})$ also vanish for vertices in $\Sigma,$ the arguments for finding the matrix $A_{ij}^+$ can be repeated to find that 
\begin{align*}
A^-_{ij} = \begin{cases}
\tfrac{2\nu(i) + \nu(i-1)+\nu(i+1)}{\mu(i)}, &\text{ if }i=j; T^-<i\leq -1\\
\tfrac{\nu(T^-)+\nu(T^-+1)}{\mu(T^-)}, &\text{ if }i = j = T^-\\
\tfrac{-\nu(i)-\nu(j)}{\sqrt{\mu(i)\mu(j)}},&\text{ if }|i-j| = 1; T^- \leq i,j\leq -1\\
0, &\text{otherwise.}
\end{cases}
\end{align*}
We can now estimate Equation \ref{eq:Ray3} from above using the matrices $A^+$ and $A^-$:
\begin{equation}\label{eq:Ray4}
 \lambda_{k+l}(G)\leq \frac{1}{2}\max \left \{ \inf_{W_k^+}\sup_{\vw_+\in W_k^+}\frac{\langle \vw_+,A^+\vw_+ \rangle_+}{\langle \vw_+,\vw_+ \rangle_+}, 
  \inf_{W_l^-}\sup_{\vw_-\in W_l^-}\frac{\langle \vw_-,A^-\vw_- \rangle_-}{\langle \vw_-,\vw_-\rangle_-}
\right \}.
\end{equation}

Since $A^+$ and $A^-$ are symmetric, the spectral theorem implies that there exist an orthonormal basis of $T^+$ real eigenfunctions of $A^+$ in $W^+$ with corresponding real eigenvalues and an orthonormal basis of $T^-$ real eigenfunctions of $A^-$ in $W^-$ having real eigenvalues. It is easy to see that if $\vw_{\ast}\in W^+$ is an eigenfunction of $A^+$ with corresponding eigenvalue $\rho_{\ast},$ we have 
	\begin{equation}\label{eq:SpecThm}
    	\frac{\langle \vw_{\ast},A^+\vw_{\ast} \rangle_+}
        	{\langle \vw_{\ast},\vw_{\ast} \rangle_+}=\rho_{\ast}.
	\end{equation}
The same relationship holds for $A^-$ and the eigenfunctions of $W^-.$
Since these bases of eigenfunctions are orthonormal, the $k$-th eigenvalue of $A^+$ in $W^+_k$ and the $l$-th eigenvalue of $W_l^-$ which we denote $\rho_k^+$ and $\rho_l^-,$ respectively, satisfy the following:
	\begin{equation}\label{eq:Spec2}
    	\inf_{W_k^+}\sup_{\vw_+\in W_k^+}\frac{\langle \vw_+,A^+\vw_+ \rangle_+}{\langle \vw_+,\vw_+ \rangle_+}=\rho_k^+ \hspace{.5in}\text{ and }
    	\hspace{.5in}
    	\inf_{W_l^-}\sup_{\vw_- \in W_l^-}\frac{\langle \vw_-,A^-\vw_- \rangle_-}{\langle \vw_-,\vw_- \rangle_-}=\rho_l^-
    \end{equation}
Combining Equations \ref{eq:Ray4} and \ref{eq:Spec2}, it follows that 
    \begin{equation*}
        \lambda_{k+l}(G) \leq 
        \frac{1}{2} \max \left \{\rho_k^+,\rho_l^- \right \}.
    \end{equation*}
This establishes Equation \ref{eq:eigenhigher}.    

The eigenvalue estimate on $\lambda_j(G)$ holds by taking $j=k+l$ in Equation \ref{eq:eigenhigher} while noting that the arguments above hold for any $t^+$ and $t^-$ with $1\leq t^+ \leq T^+$ and $T^-\leq t^- \leq -1,$ where one must restrict $k$ and $l$ such that $k \leq t^+$ and $l \leq |t^-|.$
\end{proof}

We remark that in the continuous case, one shows that analogue of the operator $A$ can be rewritten as a Sturm-Liouville problem depending on the same parameters of the manifold as Buser's inequality.  The details can be found in Benson \cite{B15}.

\subsection{Applying volume growth bounds}

In this section, we use $\nu(k)$ to denote a volume growth bound around $\Sigma$; i.e., a function with the property that, given a fixed $\Sigma\subset V$, all choices of sets $V^+,V^-$, and all $k\geq 0$, we have that $|\dist^{-1}_{\Sigma}(k)|\leq |\Sigma| \nu(k)$. The function $\nu$ may depend on $\Sigma$ as well as the curvature, though previously we have only presented volume growth bounds that are independent of the choice of $\Sigma$. For convenience, we often denote $\Sigma_k=\dist_{\Sigma}^{-1}(k).$

\begin{remark}
In this section our results are in terms of the outer vertex isoperimetric constant $h_{out}$.  This is most natural because we use the counting measure on the vertex set. As stated before, there are simple bounds relating $h_{out}$ to the edge isoperimetric constant $h$:  $$h\leq h_{out}\leq hd\,,$$ where $d$ is the degree of the graph. Using these inequalities, it is possible to rewrite our results in terms of $h$. 
\end{remark}

\begin{lemma}\label{lem:d_lowerbound} Let $A\subset V$ be the set that achieves the outer vertex isoperimetric constant $h_{out}$ and let $\Sigma = \partial_{out}A$.  Set either $V^+ = A$ or $V^+ = V\setminus (A\cup \Sigma)$, and let $V^-$ be the other. Use this choice of $V^{+}$ and $V^-$ to define the signs (positive and negative, respectively) of the signed distance function $\dist_{\Sigma}$.  Let $k\geq 0,$ for $\Sigma_k = \dist^{-1}_{\Sigma}(k),$ it follows that  
\begin{align*} |\Sigma_k| \geq |\Sigma |\biggl(1 -h_{out}\sum_{i=0}^k \nu(i)\biggr). 
\end{align*}
\end{lemma}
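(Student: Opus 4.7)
The proof should use the fact that $A$ is an outer vertex isoperimetric minimizer to lower-bound $|\Sigma_k|$ by applying the definition of $h_{out}$ to a ``tail'' set obtained by peeling off the first $k$ shells. First, I would treat the main case $V^+=A$. Since $A \cap \Sigma = \emptyset$ and $\Sigma = \partial_{out}A$, any neighbor of a vertex of $A$ either lies in $A$ or in $\Sigma$. This yields the disjoint decomposition $A = \bigsqcup_{i\geq 1} \Sigma_i$, and the optimality of $A$ gives $|A| = |\Sigma|/h_{out}$.

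Next, for each $k \geq 1$ define the tail $T_k = \bigsqcup_{i>k} \Sigma_i \subset A$. The key combinatorial observation is that $\partial_{out} T_k \subset \Sigma_k$: if $u \notin T_k$ is adjacent to some $v \in T_k$, then the fact that neighbors of $A$ outside $A$ lie in $\Sigma$ forces $u \in A \cup \Sigma$, and $\dist_\Sigma(v) \geq k+1$ together with $u \sim v$ forces $\dist_\Sigma(u) \geq k$; combined with $u \notin T_k$ this gives $\dist_\Sigma(u) = k$. Because $|T_k| \leq |A| \leq |V|/2$, the definition of $h_{out}$ applies and yields
\begin{align*}
|\Sigma_k| \;\geq\; |\partial_{out}T_k| \;\geq\; h_{out}|T_k| \;=\; h_{out}\Bigl(|A| - \sum_{i=1}^k |\Sigma_i|\Bigr).
\end{align*}
Plugging in $|A|=|\Sigma|/h_{out}$ and the upper volume-growth bound $|\Sigma_i| \leq |\Sigma|\nu(i)$ then gives
\begin{align*}
|\Sigma_k| \;\geq\; |\Sigma|\Bigl(1 - h_{out}\sum_{i=1}^k \nu(i)\Bigr) \;\geq\; |\Sigma|\Bigl(1 - h_{out}\sum_{i=0}^k \nu(i)\Bigr),
\end{align*}
which is the desired inequality. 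The case $k=0$ is immediate from $|\Sigma_0|=|\Sigma|$.

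The case $V^+ = V\setminus(A\cup\Sigma)$ is the main obstacle, because one cannot guarantee $|V^+| \leq |V|/2$. I would handle it via a sub-case analysis: the same boundary-containment argument shows $\partial_{out}T_k \subset \Sigma_k$ whenever $k \geq 1$, since any neighbor of $T_k$ lying in $A$ would have to belong to $\Sigma$ (contradiction) and any neighbor in $\Sigma$ would force $\dist_\Sigma(v)\leq 1$ (also contradiction for $k\geq 1$). If $|T_k| \leq |V|/2$ the calculation above runs identically. If instead $|V \setminus T_k| \leq |V|/2$, I would apply $h_{out}$ to $V \setminus T_{k-1} = A \cup \Sigma \cup \bigcup_{i=1}^{k-1}\Sigma_i$, whose outer boundary lies in $\Sigma_k$ by the same distance argument; this produces the stronger bound $|\Sigma_k| \geq h_{out}(|A|+|\Sigma|) = |\Sigma|(1+h_{out})$, which a fortiori implies the claim.

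The only nontrivial step is the containment $\partial_{out} T_k \subset \Sigma_k$; everything else is the arithmetic of $|A| = |\Sigma|/h_{out}$ with the upper bound $|\Sigma_i|\leq |\Sigma|\nu(i)$. Note that the proof actually yields the slightly sharper estimate with $\sum_{i=1}^k$ in place of $\sum_{i=0}^k$, but the weaker form stated in the lemma is convenient for later use (and trivially subsumes the $k=0$ case).
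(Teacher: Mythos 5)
Your argument follows essentially the same route as the paper's: both apply the definition of $h_{out}$ to a "tail" set (your $T_k$, the paper's $C^+=\bigcup_{i>k}\Sigma_i$) or to the complementary "head" set (your $V\setminus T_{k-1}$, the paper's $C^-=\bigcup_{i<k}\Sigma_i$), using the key containment that the outer boundary of either set sits inside $\Sigma_k$. The paper just organizes the case split directly on whether $|C^-|$ is below or above $|V|/2$, rather than first splitting on which of $A$ or $V\setminus(A\cup\Sigma)$ is $V^+$; this avoids one sub-case.

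One small imprecision worth noting: in the sub-case $V^+=V\setminus(A\cup\Sigma)$ with $|T_k|\leq|V|/2$, you assert the calculation "runs identically," but it does not quite, since there $T_k\subseteq V^+$ and $T_k\not\subseteq A$, so $|T_k|=|V^+|-\sum_{i=1}^k|\Sigma_i|$, not $|A|-\sum_{i=1}^k|\Sigma_i|$. You recover the stated bound via $|V^+|=|V|-|A|-|\Sigma|\geq|V\setminus A|-|\Sigma|\geq|A|-|\Sigma|$ (together with $\nu(0)\geq 1$, needed to absorb the extra $-h_{out}|\Sigma|$ into the $i=0$ term of the sum); this is exactly the estimate $|V^+|\geq|A|-|\Sigma|$ the paper uses in its case~2.
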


\begin{proof}
Observe that the case $k = 0$ is trivial.  Assume $k>0$.

Define $C^- = \bigcup_{i < k}\dist^{-1}_{\Sigma}(i)$ and $C^+ = \bigcup_{i > k}\dist^{-1}_{\Sigma}(i).$ We will split the proof into two cases.

\begin{enumerate}
\item In the first case, suppose $|C^-|< \tfrac{1}{2}|V|$.  Since $k>0$, we have that $(V^-\cup \Sigma )\subseteq C^-$, \\
therefore $|V^-\cup \Sigma|< \tfrac{1}{2}|V|.$  By assumption $\tfrac{1}{2}|V| \leq |V\setminus A| = \left|\big (V\setminus (A\cup\Sigma)\big )\cup\Sigma \right|,$
so $V^-\neq V\setminus (A\cup\Sigma).$ It follows that $V^- = A$.

Because $|A| =|V^-|<|C^-|< \tfrac{1}{2}|V|$, we have that 
$$\frac{|\Sigma|}{|A|}=h_{out} \leq \frac{|\Sigma_k|}{|C^-|}$$ and so 
$|\Sigma_k|\geq h_{out}|C^-|\geq h_{out}|A| = |\Sigma|$ and the result follows.\\

\item In the other case, we have $|C^-|\geq \tfrac{1}{2}|V|.$ Because $C^-$ and $C^+$ are disjoint,\\ we have that $|C^+|\leq \tfrac{1}{2}|V|$.  Therefore 
\begin{align*}|\Sigma_k|\geq h_{out}|C^+| = h_{out}\left (|V^+|-\sum_{i=1}^k |\dist^{-1}_{\Sigma}(i)|\right ).
\end{align*}

Observe that since $|A|\leq |V|/2,$ we have that 
\begin{align*}
|V^+|&\geq \min\{|V\setminus(A\cup\Sigma)|,|A|\}\\ 
	&\geq \min\{|V\setminus A|-|\Sigma|,|A|\}\\
    &\geq \min\{|A|-|\Sigma|,|A|\} \\
    &= |A|-|\Sigma|.
\end{align*}
Applying the previous bound gives us 
\begin{align*}|\Sigma_k|\geq h_{out}\biggl(|V^+|-\sum_{i=1}^k |\dist^{-1}_{\Sigma}(i)|\biggr) \geq h_{out}\biggl(|A|-|\Sigma|-\sum_{i=1}^k |\Sigma|\nu(i) \biggr)\\ = h_{out}\biggl(|A|-\sum_{i=0}^k |\Sigma|\nu(i) \biggr) = |\Sigma |\biggl(1 -h_{out}\sum_{i=0}^k \nu(i)\biggr),
\end{align*}

where the first equality relies on the (always reasonable) assumption that $\nu(0) \geq 1.$ This proves the result.
\end{enumerate}
\end{proof}

By this conclusion of Lemma \ref{lem:d_lowerbound}, one can think of the lower weights for vertex expansion $\mu(k)$ as 
    $$\mu(k)=1-h_{out}\sum_{i=0}^k \nu(i).$$
As a result, we have 
$$|\Sigma|\nu(k)\geq |\dist^{-1}_{\Sigma}(k)|\geq |\Sigma|\left (1-h_{out}\sum_{i=0}^k\nu(i)\right )$$ and from the Rayleigh quotient in Equation \ref{eq:Ray3}, we obtain
\begin{equation}\label{eq:Eigen}
\lambda_2 \leq \inf_{W_1}\sup_{g\in W_1}\frac{\frac{1}{2}\sum_{k=0}^{T}\nu(k)\biggl[ \parens{g(k)-g(k+1)}^2 + \parens{g(k)-g(k-1)}^2\biggr]}{\sum_{k=0}^{T}g^2(k)\left (1-h_{out}\sum_{i=0}^k \nu(i)\right )}.\end{equation}
where $T$ is the largest integer for which $1>h_{out}\sum_{i=0}^T \nu(i)$.
Here, by assumption we have the same volume growth bounds on $V^+$ and $V^-$, so (unlike the previous section) the Rayleigh quotients are identical on both sides of the cut-set.

\subsection{Bounds on $\lambda_2$}

Of particular interest is the problem of bounding $\lambda_2$.  Indeed, the original proofs of Buser's inequality only bound $\lambda_2$ and not the higher eigenvalues $\lambda_k:k\geq 3$. \cite{Buser82,Ledoux94,Ledoux04spectralgap}.

First, we will give a short proof of a bound on $\lambda_2$ that is independent of the Cheeger cut-set.

\begin{theorem}\label{thm:biglevel}
Let $\Sigma\subset V$ be a set (not necessarily the Cheeger-achieving cut set) that cuts $V$ into $V^+$ and $V^-$, and define the one-sided shells $\dist^{-1}_{\Sigma}(k)$ as before.  Let $\alpha = |\Sigma|/|V|$ .  Assume that $\alpha < 1/4$.  If $|\Sigma|\geq |\dist^{-1}_{\Sigma}(k)|$ for all $k\in \Z$, then $\lambda_2 \leq 8\alpha^2+ o(\alpha^2)$.
\end{theorem}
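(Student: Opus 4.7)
The proof follows the same Buser-style strategy used for Theorem \ref{theo:eigen2} but does not rely on $\Sigma$ being Cheeger-achieving; instead it exploits the hypothesis $|\dist_\Sigma^{-1}(k)|\le |\Sigma|$ directly in the numerator. The heuristic picture is that a $\Sigma$ whose shells do not grow makes the graph look like a long ``tube'' of width $|\Sigma|$ and length of order $1/\alpha$, on which the spectral gap is $\Theta(\alpha^2)$.

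My plan is to take a piecewise-linear odd profile in the signed distance. Choose a cutoff $T$ to be optimized, of order $1/\alpha$, and define $g\colon\Z\to[-1,1]$ by $g(k)=k/T$ for $|k|\le T$ and $g(k)=\mathrm{sign}(k)$ for $|k|>T$. Set $f(v)=g(\dist_\Sigma(v))$. Then apply Poincar\'e--Courant--Fischer to $\hat f=f-\bar f$ with $\bar f=|V|^{-1}\sum_v f(v)$, so that
\[
\lambda_2 \le \frac{\langle f,\Delta f\rangle}{\langle f,f\rangle-\bar f^{\,2}|V|}.
\]

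For the numerator, each edge of $G$ joins two consecutive shells $\Sigma_k,\Sigma_{k+1}$, and the hypothesis $|\Sigma_k|\le|\Sigma|$ gives $e_{k,k+1}\le d\min(|\Sigma_k|,|\Sigma_{k+1}|)\le d|\Sigma|$. Since $(g(k)-g(k+1))^2=1/T^2$ for $k\in\{-T,\dots,T-1\}$ and $0$ otherwise,
\[
\langle f,\Delta f\rangle \;=\;\frac{1}{d}\sum_k e_{k,k+1}\bigl(g(k)-g(k+1)\bigr)^2 \;\le\; \frac{2|\Sigma|}{T}.
\]
For the unnormalized $L^2$ mass, every vertex with $|\dist_\Sigma(v)|>T$ contributes $g^2=1$, and the hypothesis gives $|\{v:|\dist_\Sigma(v)|>T\}|\ge |V|-(2T+1)|\Sigma|$, so
\[
\sum_v f(v)^2 \;\ge\; |V|-(2T+1)|\Sigma|.
\]
The assumption $\alpha<1/4$ is what guarantees that one can choose $T$ comparable to $1/(4\alpha)$ while keeping this denominator a positive constant fraction of $|V|$.

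The main obstacle I anticipate is the orthogonality correction $\bar f^{\,2}|V|$. Because $g$ is odd, $\bar f$ measures precisely the imbalance $\sum_{k\ge 1}(|\Sigma_k|-|\Sigma_{-k}|)g(k)/|V|$ between the two sides of $\Sigma$, which the hypothesis does not directly control. I would treat this via a case split. In the near-balanced regime $||V^+|-|V^-||=o(|V|)$, this correction is already $o(\alpha^2)$, and optimizing $T$ in the bound
\[
\lambda_2 \;\le\; \frac{2|\Sigma|/T}{|V|-(2T+1)|\Sigma|}\bigl(1+o(1)\bigr)
\]
produces the leading-order constant from the theorem. In the unbalanced regime where one side (say $V^-$) is significantly smaller, I would instead apply the same profile as a one-sided Dirichlet test function on that side and use the Cauchy--Schwarz orthogonalization $\langle\hat f,\hat f\rangle\ge(1-\beta_-)\langle f,f\rangle$ with $\beta_-=|V^-|/|V|$, showing that the resulting asymptotic constant matches. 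Combining the two regimes and taking $\alpha\to 0$ gives the stated bound $\lambda_2\le 8\alpha^2+o(\alpha^2)$; the delicate bookkeeping of the constant $8$ and the error term absorbed into $o(\alpha^2)$ is where the proof demands the most care.
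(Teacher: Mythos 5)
Your plan is recognizably a cousin of the paper's proof, but it diverges in two places, one of which is a genuine gap in getting the constant $8$, and the other of which actually catches something the paper itself is cavalier about.

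\paragraph{Difference in test function and the numerator bound.}
The paper uses a \emph{one-sided} linear profile supported on the larger side $V^+$: $f=i$ on $\dist_\Sigma^{-1}(i)$ for $0\le i\le t$, and $f=t$ beyond, with $f\equiv 0$ on $\Sigma\cup V^-$. You instead use a two-sided odd profile in $[-1,1]$ with cutoff $T$. These are morally equivalent, but the way you bound the Dirichlet energy loses a factor of $2$. You estimate per \emph{edge}: $e_{k,k+1}\le d\min(|\Sigma_k|,|\Sigma_{k+1}|)\le d|\Sigma|$, and sum over $2T$ transitions, giving $\langle f,\Delta f\rangle\le 2|\Sigma|/T$. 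The paper instead estimates per \emph{vertex}: for each $x\in\Sigma_i$ ($0\le i\le t$), the inner sum $\frac{1}{2d}\sum_{y\sim x}(f(x)-f(y))^2\le \frac12$, since there are at most $d$ neighbors and each step changes $f$ by at most $1$ (at most $1/T$ in your scaling). Summing over the at most $(t+1)|\Sigma|$ contributing vertices gives $\frac{(t+1)|\Sigma|}{2}$, which after rescaling is half of your per-edge estimate. The reason the per-edge bound is wasteful is that it charges $d|\Sigma|$ edges to \emph{each} of the transitions $k\to k+1$, whereas a single vertex's degree budget $d$ is shared between the $\Sigma_{k-1}$ and $\Sigma_{k+1}$ transitions. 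Carrying your numerator through the same optimization over $T\approx 1/(4\alpha)$ yields $16\alpha^2+o(\alpha^2)$, not $8\alpha^2$. To recover the stated constant along your route you need to replace the per-edge count by the per-vertex (or degree-budget) argument.

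\paragraph{The orthogonality correction.}
You correctly flag that the test function is not orthogonal to constants, and your proposal to subtract the mean and then split into a near-balanced regime and an unbalanced regime is the right instinct, but as written it is a plan rather than a proof: the unbalanced case (``apply the same profile as a one-sided Dirichlet test function on that side'') is exactly where the difficulty lies, because on the small side there may be no admissible cutoff $T$ with $T|\Sigma|<|V^-|$. For what it's worth, the paper's own proof simply writes $\lambda_2\le \langle f,\Delta f\rangle/\langle f,f\rangle$ for the nonnegative test function and never subtracts the mean; it is implicitly relying on $f$ vanishing on roughly half of $V$, which controls $|V|\bar f^2$ up to the $o(\alpha^2)$ error only when $|V^-|$ is a constant fraction of $|V|$. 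The cleanest fix for both your argument and the paper's is the two-function minimax: define $f_+$ and $f_-$ by the same profile on $V^+$ and $V^-$ respectively; they have disjoint supports and $\langle f_+,\Delta f_-\rangle=0$, so $\lambda_2\le\max\{R(f_+),R(f_-)\}$, and each of $R(f_\pm)$ is a genuine Rayleigh quotient with no mean-correction needed. If you incorporate this and tighten your numerator as above, your approach becomes essentially the paper's proof reorganized around a two-sided profile.
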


The proof loosely follows the method of the original proof of Buser's inequality for graphs.

\begin{proof}
Recall the Rayleigh quotient \begin{align*}\lambda_2(G) = \inf_f \frac{\tfrac{1}{2d}\sum_x \sum_{y\sim x} \parens{f(x)-f(y)}^2}{\sum_x f(x)^2}.\end{align*}
Without loss of generality assume that $|V^+| \geq |V^-|$. Let $t = \lfloor \frac{1}{4\alpha} \rfloor$  Because $\alpha < 1/4$ and $t>0,$ we can construct the following test-function in the Rayleigh quotient to bound $\lambda_2(G)$:
\begin{align*}
f(x) = 
\begin{cases}0 \text{ if } x\in \dist^{-1}_{\Sigma}(i)\text{ where } i\leq 0,\\
i \text{ if } x\in \dist^{-1}_{\Sigma}(i)\text{ where } 0\leq i \leq t,\\
t \text{ if } x\in \dist^{-1}_{\Sigma}(i)\text{ where } i \geq t.
\end{cases}
\end{align*}
For a vertex $x$,
\begin{align*}
\tfrac{1}{2d} \sum_{y\sim x} \parens{f(x)-f(y)}^2 \leq 
\begin{cases}
\tfrac{1}{2} \text{ if } x\in \dist^{-1}_{\Sigma}(i) \text{ where }0\leq i \leq t,\\
0 \text{ otherwise},
\end{cases}
\end{align*} and 
\begin{align*}
f(x)^2 \geq
\begin{cases}
t^2 \text{ if } x\in \dist^{-1}_{\Sigma}(i) \text{ where }i \geq t,\\
0 \text{ otherwise}.
\end{cases}
\end{align*}

Using these bounds, we see that 
\begin{align*}\sum_x \tfrac{1}{2d} \sum_{y\sim x} \parens{f(x)-f(y)}^2\leq \tfrac{1}{2}\sum_{i=0}^t |\dist^{-1}_{\Sigma}(i)|\leq \frac{(t+1)}{2}|\Sigma|
\end{align*} 
and 
\begin{align*}
\sum_x f(x)^2 \geq t^2 \sum_{i \geq t }|\dist^{-1}_{\Sigma}(i)| \geq t^2 \parens{|V^+|-t|\Sigma|} \geq t^2 \parens{\tfrac{1}{2}|V|-\tfrac{1}{4}|V|} = \tfrac{1}{4}t^2|V|.
\end{align*}
Combining the previous two inequalities, we find the result: \begin{align*}\lambda_2\leq \frac{2(t+1)|\Sigma|}{t^2|V|} = \parens{2/t + o(1/t)}\alpha = 8\alpha^2 + o(\alpha^2).\end{align*}
\end{proof}

Now we attempt to bound $\lambda_2$ in terms of the Cheeger cut-set in order to achieve a Buser-type result.
Observe that the Rayleigh minimizing function for $\lambda_2$ must have certain properties.

\begin{lem}\label{lem:gMonotone}
	The function $g(k)$ corresponding to the non-constant minimizer of the Rayleigh quotient in Equation \ref{eq:Eigen} is monotone in $k$.
\end{lem}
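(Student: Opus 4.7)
The plan is to prove the lemma by a two-step rearrangement argument, showing that every non-constant minimizer $g$ of the Rayleigh quotient in Equation~\ref{eq:Eigen} can be replaced, without increasing the quotient, by a non-negative monotone non-decreasing function. In the first step, I would replace $g$ by $|g|$. The denominator $\sum_k g(k)^2\bigl(1-h_{out}\sum_{i=0}^k\nu(i)\bigr)$ depends only on $g^2$ and is therefore unchanged, while the reverse triangle inequality $\abs{|a|-|b|}\leq |a-b|$ forces every term $(g(k)-g(k\pm 1))^2$ in the numerator to be non-increasing; the boundary condition $g(0)=0$ and the non-vanishing of $g$ both survive the substitution. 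Hence we may assume $g\geq 0$ with $g(0)=0$.

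In the second step, I would replace $g$ by its running maximum $\tilde{g}(k):=\max_{0\leq j\leq k} g(j)$, which is monotone non-decreasing with $\tilde{g}(0)=0$ and $\tilde{g}\geq g\geq 0$ pointwise. Writing $\phi(k):=g(k)-g(k-1)$ and $\tilde{\phi}(k):=\tilde{g}(k)-\tilde{g}(k-1)$, the key pointwise comparison is $\tilde{\phi}(k)^2\leq\phi(k)^2$: either $g(k)$ does not exceed the prior running maximum, in which case $\tilde{\phi}(k)=0$, or else $\tilde{g}(k-1)=\max_{j<k}g(j)\geq g(k-1)$, which yields $0\leq\tilde{\phi}(k)=g(k)-\tilde{g}(k-1)\leq g(k)-g(k-1)=\phi(k)$. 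After rewriting the numerator of the Rayleigh quotient (exactly as in the derivation leading to the weighted Hardy form in the proof of Theorem~\ref{theo:eigen2}) as $\sum_k \tfrac{\nu(k-1)+\nu(k)}{2}\phi(k)^2$, this pointwise bound forces the numerator to be non-increasing. Simultaneously $\tilde{g}(k)^2\geq g(k)^2$, and $\mu(k)=1-h_{out}\sum_{i=0}^k\nu(i)\geq 0$ for all $k\leq T$ by the definition of $T$, so the denominator is non-decreasing. Hence the Rayleigh quotient of $\tilde{g}$ is at most that of $g$; since $g$ was a minimizer, so is $\tilde{g}$, which is monotone.

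The main technical point is the pointwise inequality $\tilde{\phi}(k)^2\leq\phi(k)^2$; it genuinely depends on the preliminary reduction to $g\geq 0$, because for signed $g$ the running maximum can introduce a large positive jump that is not controlled by any single original difference, and the numerator can actually increase. To justify the definite article in the lemma statement, one would additionally appeal to the uniqueness up to scalar of the first non-trivial eigenfunction of the associated discrete Sturm--Liouville / tridiagonal eigenvalue problem from Theorem~\ref{theo:eigenhigher}, upgrading the existence of a monotone minimizer to the statement that \emph{every} non-constant minimizer is monotone.
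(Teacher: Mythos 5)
Your proof is correct, and it takes a genuinely different route from the paper. The paper proceeds by induction and a local reflection: assuming $g$ is monotone up to $k$ but $g(k+1)<g(k)$, it replaces $g(k+1)$ by $2g(k)-g(k+1)$, claiming that the numerator is unchanged while the denominator strictly increases. Your argument instead does a two-step global rearrangement: first $g\mapsto |g|$ (denominator fixed, numerator non-increasing by the reverse triangle inequality), then $g\mapsto\tilde{g}(k)=\max_{0\le j\le k}g(j)$ (denominator non-decreasing since $\mu(k)\ge0$ for $k\le T$, numerator non-increasing via the pointwise bound $\tilde{\phi}(k)^2\le\phi(k)^2$ after rewriting the numerator as $\sum_k\tfrac{\nu(k-1)+\nu(k)}{2}\phi(k)^2$). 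A concrete advantage of your approach is that it tracks \emph{all} numerator terms correctly: the paper's single-value replacement also perturbs the $\nu(k+1)\bigl(g(k+1)-g(k+2)\bigr)^2$ term, which the written argument does not address, so your proof is arguably tighter on this point. The one place where the two proofs meet the same subtlety is the definite article in the lemma: your rearrangement produces \emph{a} monotone minimizer and you then (correctly) invoke simplicity of the bottom Dirichlet eigenvalue of the tridiagonal problem to upgrade to ``every minimizer is monotone''; the paper's contradiction-style phrasing implicitly leans on the same uniqueness without stating it. For the paper's downstream use (bounding $\lambda_2$ via one monotone test function), either version suffices.
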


\begin{proof}[Proof of Lemma \ref{lem:gMonotone}.] We will induct on $k$. The base case is trivial since $g(0)=0$ by the Dirichlet boundary condition on $f^{-1}(0)$. Without loss of generality, assume that $g(1)\geq 0$, else replace $g(1)$ with $-g(1)$ and proceed to the induction step.

Assume for contradiction that $g$ is monotone increasing up to some $k$ in its domain, but that $g(k+1)<g(k)$. Then replacing $g(k+1)$ by $2g(k)-g(k+1)$, the numerator of $R(g)$ is unchanged as  $$\left [g(k)-\big (2g(k)-g(k+1)\big )\right ]^2=\big (g(k)-g(k+1)\big )^2.$$ At the same time, the denominator of $R(g)$ increases since $\big (2g(k-1)-g(k)\big )^2>g(k-1)^2$, therefore the quotient $R(g)$ decreases, contradicting the assumption that $g$ is a non-constant minimizer of $R(g)$.
\end{proof}

We are now able to bound the Rayleigh quotient within a constant factor.
To bound $\lambda_2$, we apply Equation \ref{eq:Eigen} giving the Rayleigh quotient \begin{align}\label{eqn:Ray4}
\lambda_2 \leq R:= 
\inf_f\frac{\frac{1}{2}\sum_{k=0}^{T}\nu(k)\biggl[ \parens{f(k)-f(k+1)}^2 + \parens{f(k)-f(k-1)}^2\biggr]}{\sum_{k=1}^{T}f^2(k)\left (1-h_{out}\sum_{i=0}^k\nu(i)\right )},
\end{align}
where the infimum is taken over all functions $f:\Z\to\R$ with $f(0) = 0$, $f(1)\neq 0$, $f(i) = 0$ if $i<0$ and $f(i) = f(T)$ if $i>T$.

\begin{theorem}\label{thm:vgB}The bounds on $R(g)$ are 
$\frac{1}{8B}\leq R\leq \frac{1}{2B},$ where $$B = \sup_{n\geq 1}\left (\sum_{k=n}^T (1-h_{out}\sum_{i=0}^k \nu(i))\right )\left (\sum_{k=1}^n\frac{1}{\nu(k)+\nu(k-1)}\right ).$$
\end{theorem}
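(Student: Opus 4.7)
My plan is to recognize the Rayleigh quotient in Equation~\ref{eqn:Ray4} as the reciprocal of the optimal constant in a one-sided weighted discrete Hardy inequality, and then invoke the sharp two-sided estimates of Miclo \cite{Miclo} (the discrete analogue of Muckenhoupt's characterization) to get the claimed bracketing in terms of $B$.

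The first step is a change of variable that converts the numerator to a diagonal quadratic form in ``discrete gradients''. For a candidate $f$ satisfying the stated boundary conditions, set $\phi(k) = f(k) - f(k-1)$ for $1 \leq k \leq T$, and observe that the conditions $f(0) = 0$ and $f(k) = f(T)$ for $k > T$ force $\phi(0) = 0$ and $\phi(T+1) = 0$. Telescoping gives $f(k) = \sum_{j=1}^k \phi(j)$. A short reindexing (splitting the sum in the numerator into $k \mapsto k$ and $k \mapsto k+1$ pieces, and using the vanishing of $\phi$ at the endpoints) yields
\[
\tfrac{1}{2}\sum_{k=0}^T \nu(k)\bigl[(f(k)-f(k+1))^2 + (f(k)-f(k-1))^2\bigr] \;=\; \tfrac{1}{2}\sum_{k=1}^T [\nu(k) + \nu(k-1)]\phi(k)^2,
\]
while the denominator already has the form $\sum_{k=1}^T \bigl(\sum_{j=1}^k \phi(j)\bigr)^2 \mu(k)$ with $\mu(k) := 1 - h_{out}\sum_{i=0}^k \nu(i)$. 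Hence
\[
\frac{1}{R} \;=\; 2 \sup_{\phi \not\equiv 0} \frac{\sum_{k=1}^T \bigl(\sum_{j=1}^k \phi(j)\bigr)^2 \mu(k)}{\sum_{k=1}^T [\nu(k)+\nu(k-1)]\phi(k)^2}.
\]

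The supremum on the right is precisely the best constant $C$ in a one-sided weighted discrete Hardy inequality on $\{1,\ldots,T\}$, with ``gradient'' weight $\nu(k) + \nu(k-1)$ and ``function'' weight $\mu(k)$. Miclo's theorem gives $B \leq C \leq 4B$, where $B$ is exactly the supremum appearing in the statement. Multiplying through by $2$ and inverting produces $\frac{1}{8B} \leq R \leq \frac{1}{2B}$, as claimed.

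The potential pitfall is bookkeeping: one must keep track of the factor of $\tfrac{1}{2}$ introduced by the numerator, and one must verify that the hypotheses of Miclo's theorem are met on the truncated interval $\{1,\ldots,T\}$. The latter is automatic from the definition of $T$ (which ensures $\mu(k) > 0$ for $k \leq T$) and the positivity of $\nu(k) + \nu(k-1)$. Once the Hardy inequality is identified, no deeper structural argument is needed, so there is no genuine obstacle beyond that accounting.
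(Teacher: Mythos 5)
Your proof is correct and follows essentially the same route as the paper: the same change of variable $\phi(k)=f(k)-f(k-1)$ (the paper calls it $g$), the same reindexing of the numerator into $\sum_{k=1}^T[\nu(k)+\nu(k-1)]\phi(k)^2$ using $\phi(0)=\phi(T+1)=0$, and the same appeal to Proposition 1 of Miclo for the two-sided Muckenhoupt-type bound $B\leq C\leq 4B$. You are slightly more explicit than the paper about unwinding the Miclo bracketing into $\tfrac{1}{8B}\leq R\leq\tfrac{1}{2B}$, but the argument is the same.
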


\begin{proof} To apply a result of L. Miclo \cite{Miclo}, we write Equation \ref{eqn:Ray4} in a different form: set $g(k) = f(k)-f(k-1)$ for $k\in \Z$.  Observe that $f(k) = \sum_{i=1}^k g(i)$.  Also observe that $g(k) = 0$ if $k\leq 0$ or $k>T$.  We have
\begin{align*}
2R = 
\inf_g\frac{\sum_{k=0}^{T}\nu(k)\left [g(k+1)^2 + g(k)^2\right ]}
{\sum_{k=1}^{T}\biggl(\sum_{i=1}^k g(i) \biggr)^{\!\!2}\left (1-h_{out}\sum_{i=0}^k \nu(i)\right )}\\
= \inf_g \frac{\sum_{k=1}^{T}g(k)^2\left ( \nu(k)+\nu(k-1) \right )}{\sum_{k=1}^{T}\left (\sum_{i=1}^k g(i) \right )^{\!\!2}\left (1-h_{out}\sum_{i=0}^k \nu(i)\right )},\end{align*}
taken over all functions $g:\N\to\R$.

To simplify, we write the volume growth and decay bounds as $\mu(k) = 1-h_{out}\sum_{i=0}^k \nu(i)$ and $\zeta(k) = \nu(k)+\nu(k-1)$ if $1\leq k\leq T$, and $\mu(k) = \zeta(k) = 0$ if $k\geq T$.  We have \begin{align*}2R = \inf_g \frac{\sum_{k=1}^{T}g(k)^2\zeta(k)}{\sum_{k=1}^{T}\left (\sum_{i=1}^k g(i) \right )^{\!\! 2}\mu(k)}.\end{align*}
The result follows from Proposition 1 in \cite{Miclo}.
\end{proof}

An immediate corollary is a bound on the spectral gap, obtained by combining Theorem \ref{thm:vgB} with the bound $\lambda_2\leq R$.
\begin{theorem}\label{thm:lambda_2} The inequality 
	$$\lambda_2(G) \leq \frac{1}{2B}$$ 
holds, where $$B = \sup_{n\geq 1}\left (\sum_{k=n}^T \left (1-h_{out}\sum_{i=0}^k \nu(i)\right )\right )\left (\sum_{k=1}^n\frac{1}{\nu(k)+\nu(k-1)}\right ).$$ \end{theorem}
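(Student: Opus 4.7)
The statement to prove is an immediate corollary chaining two previously established inequalities, so the proof proposal is correspondingly short. The plan is to observe that all the work has already been done and we simply need to compose the bounds.

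First, I would recall that Lemma \ref{lem:d_lowerbound} established the one-sided shell lower bound $|\Sigma_k| \geq |\Sigma|(1 - h_{out}\sum_{i=0}^k \nu(i))$, and this lower bound, together with the upper bound $|\Sigma_k| \leq |\Sigma|\nu(k)$ and the Rayleigh-quotient reduction from the Laplacian eigenvalue problem on $G$ to a one-dimensional weighted problem (established in the proof of Theorem \ref{theo:eigen2}), yields the inequality $\lambda_2(G) \leq R$, where $R$ is the weighted one-dimensional Rayleigh quotient of Equation \ref{eqn:Ray4}. This part requires no new argument: it is exactly the $k=l=1$ specialization of the derivation leading to Equation \ref{eq:Eigen}, applied with the choice of $\Sigma$ as the outer-vertex-isoperimetric optimizer and the function $\mu(k) = 1 - h_{out}\sum_{i=0}^k \nu(i)$ as the decay weight.

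Next, I would invoke Theorem \ref{thm:vgB}, which transforms the Rayleigh quotient $R$ into a weighted discrete Hardy inequality by the substitution $g(k) = f(k) - f(k-1)$ and then applies Miclo's characterization of the optimal constant in such inequalities. That theorem directly yields the upper bound $R \leq \tfrac{1}{2B}$ with the stated value of $B$.

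Chaining the two inequalities $\lambda_2(G) \leq R \leq \tfrac{1}{2B}$ gives the claimed bound. There is essentially no obstacle here beyond making sure the notational conventions between the two ingredients agree, in particular that the weights $\nu$ and $\mu = 1 - h_{out}\sum \nu$ used in the Hardy-type formulation match those arising from the shell-volume bounds; this is a routine bookkeeping verification rather than a genuine step. Consequently, the theorem can be recorded in two or three lines, simply stating the composition of the two preceding results.
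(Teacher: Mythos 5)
Your proposal matches the paper's proof exactly: the paper introduces Theorem \ref{thm:lambda_2} with the single sentence that it is ``an immediate corollary \dots obtained by combining Theorem \ref{thm:vgB} with the bound $\lambda_2 \leq R$,'' which is precisely the chain $\lambda_2 \leq R \leq \tfrac{1}{2B}$ you describe. Nothing further is needed.
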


A case of particular interest is when $\Sigma = \max_{i\in \Z}|d^{-1}_{\Sigma}(i)|$.  In this case we may set $\nu\equiv 1$.  

\begin{corollary}\label{corr:lambda2} If the vertex-isoperimetric cut-set $\Sigma$ satisfies $\displaystyle \Sigma = \max_{i\in \Z}|d^{-1}_{\Sigma}(i)|$, then $$\lambda_2\leq \frac{27}{2}h_{out}^2(1+o(1)).$$\end{corollary}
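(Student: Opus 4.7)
The plan is to apply Theorem~\ref{thm:lambda_2} directly with the constant volume growth function $\nu \equiv 1$, which is justified by the hypothesis $|\Sigma| = \max_{i} |d^{-1}_\Sigma(i)|$. Writing $h = h_{out}$ for brevity, I would first identify the cutoff: $T$ is the largest integer with $h(T+1) < 1$, hence $T = \lceil 1/h \rceil - 1$, and in particular $hT = 1 + o(1)$ as $h \to 0$.

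With $\nu \equiv 1$, the two sums inside $B$ collapse to elementary expressions. The weights $\nu(k)+\nu(k-1) = 2$ give
\[
\sum_{k=1}^n \frac{1}{\nu(k)+\nu(k-1)} = \frac{n}{2},
\]
and the arithmetic progression $\sum_{i=0}^k \nu(i) = k+1$ gives
\[
\sum_{k=n}^T \left(1 - h(k+1)\right) = (T-n+1)\left(1 - \frac{h(n+T+2)}{2}\right).
\]
Therefore
\[
B = \sup_{1 \le n \le T} \frac{n(T-n+1)}{2}\left(1 - \frac{h(n+T+2)}{2}\right).
\]

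Next I would parametrize $n = \alpha T$ with $\alpha \in (0,1)$ and pass to the leading order as $h \to 0$. Using $hT = 1 + o(1)$, the second factor simplifies to $\frac{1-\alpha}{2} + o(1)$, and the first factor to $\frac{\alpha(1-\alpha)}{2h^2}(1+o(1))$. Hence $B = \frac{\alpha(1-\alpha)^2}{4h^2}(1+o(1))$, uniformly in $\alpha$ on compact subsets of $(0,1)$. A single-variable calculus step (differentiating $\alpha(1-\alpha)^2$, whose derivative factors as $(1-\alpha)(1-3\alpha)$) shows the maximum on $[0,1]$ is attained at $\alpha = 1/3$ with value $4/27$. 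Consequently
\[
B \ge \frac{1}{27 h^2}\bigl(1 + o(1)\bigr),
\]
and substituting into Theorem~\ref{thm:lambda_2} yields $\lambda_2 \le \frac{1}{2B} \le \frac{27}{2}h^2(1+o(1))$.

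The only non-routine point will be making sure the $o(1)$ correction is genuinely controlled by $h \to 0$: the approximation $hT = 1 + o(1)$ introduces $O(h)$ errors in the first factor and $O(1)$ errors in the prefactor $n(T-n+1)$, both of which are absorbed by the quadratic dependence on $1/h$. I would therefore take $n = \lfloor T/3 \rfloor$ as an explicit (not optimal) test value, verify the resulting lower bound on $B$, and conclude. There is no substantive obstacle; this is essentially a one-variable optimization once Theorem~\ref{thm:lambda_2} is in hand, and the constant $27/2$ emerges from the classical extremal value $\max_{\alpha \in [0,1]} \alpha(1-\alpha)^2 = 4/27$.
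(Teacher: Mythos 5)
Your proposal is correct and follows essentially the same route as the paper's proof (given in Example \ref{ex:constant}): take $\nu \equiv 1$ in Theorem \ref{thm:lambda_2}, compute $B$ in closed form, and observe that the supremum over $n$ is attained near $n \approx T/3$, with the extremal value $4/27$ of $\alpha(1-\alpha)^2$ producing the constant $27/2$. The only minor discrepancy is the cutoff formula (your $T = \lceil 1/h \rceil - 1$ should be $\lceil 1/h \rceil - 2$ given the strict inequality $h(T+1)<1$), but this does not affect the leading-order asymptotics on which the result depends.
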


The proof is found in Example \ref{ex:constant}. Under these hypotheses the Cheeger lower bound $\lambda_2 \geq c*h_{out}^2/d$ is tight up to a linear factor of $d$.

Observe that this is a related result to Theorem \ref{thm:biglevel}.  WLOG assume $h_{out} = |\Sigma|/|V^+|$, this behaves similarly to the term $\alpha = |\Sigma|/|V|$ in that theorem.

\subsection{Results for the higher Cheeger constants}
We define the {\bf higher order, outer vertex Cheeger constant} to be 
$$h_{out}(n)=\min_{V_1, \ldots , V_n} \max_i \left \{ \frac{|\partial_{out} V_i|}{|V_i|} \right \},$$ 
where $V_1, V_2, \ldots, V_n\subset V$ are non-empty, pairwise disjoint, and have the property that $\cup_{i=1}^n V_i =V.$ 
Our main focus in this subsection is to develop enough of the properties of $h_{out}(n)$ to give the following analogue of Corollary \ref{corr:lambda2} for the higher eigenvalues:
\begin{theorem}\label{thm:higherBuser} Assume that $\nu(i)=1$ for all $i \in [T^-, T^+].$ If $n\geq 2$ and $h_{out}(n)<1,$ then we have 
\begin{align*}\lambda_k(G)\leq  k^2h_{out}(n)^2 \parens{\frac{27\pi^2}{16}+o(1)}.
\end{align*}
\end{theorem}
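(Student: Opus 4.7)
The plan is to reduce to the single-cut-set machinery of Theorem \ref{theo:eigenhigher} by using one piece of an $n$-partition realizing $h_{out}(n)$, and then estimate the resulting weighted tridiagonal matrix spectrum in the limit $h_{out}(n) \to 0$. To start, fix an $n$-partition $\{V_1, \ldots, V_n\}$ of $V(G)$ with $\max_i |\partial_{out} V_i|/|V_i| = h_{out}(n)$, pick any single piece $V_i$, and set $\Sigma = \partial_{out} V_i$ and $V^+ = V_i$, so that $|\Sigma|/|V^+| \leq h_{out}(n) < 1$. An inspection of the proof of Lemma \ref{lem:d_lowerbound} shows that it only uses that the cut-set is no worse than the vertex isoperimetric optimum on the side under analysis, and it therefore adapts to produce the lower shell volume bound $\mu(j) = 1 - h_{out}(n)(j+1)$ for $0 \leq j \leq T^+ := \lfloor 1/h_{out}(n)\rfloor - 1$, with $\nu \equiv 1$ serving as the matching upper volume growth bound.

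Next, apply Theorem \ref{theo:eigenhigher} with test functions supported on $V^+$ to conclude $\lambda_k(G) \leq \tfrac{1}{2}\rho_k^+(t^+)$ for every $1 \leq t^+ \leq T^+$, where $\rho_k^+(t^+)$ is the $k$-th eigenvalue of the weighted tridiagonal matrix $A^+(t^+)$ with diagonal $4/\mu(i)$ on the interior and $2/\mu(t^+)$ at the boundary, and off-diagonal $-2/\sqrt{\mu(i)\mu(i\pm 1)}$. A diagonal conjugation by $\mathrm{diag}(\sqrt{\mu(i)})$ recasts this as the generalized eigenvalue problem $Lv = \tfrac{\rho}{2}\mu v$, where $L$ is the standard discrete Laplacian on $\{1,\ldots,t^+\}$ with Dirichlet boundary at $0$ and a Neumann-type condition at $t^+$ induced by the boundary entry $2/\mu(t^+)$. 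In the constant-$\mu$ case, $L$ has closed-form eigenvalues $4\sin^2((2k-1)\pi/(2(2t^++1)))$, which are asymptotically at most $k^2\pi^2/(t^+)^2(1+o(1))$ via $(2k-1)^2/4 \leq k^2$.

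Choose the truncation $t^+ = \lfloor c/h_{out}(n)\rfloor$ with $c \in (0,1)$ to be optimized, so $\mu(i) \in [1-c, 1]$ on $\{1,\ldots,t^+\}$. Plugging the sine test functions $v_j(m) = \sin(j\pi m/t^+)$, $j = 1,\ldots,k$, into the Rayleigh quotient of the generalized problem --- or equivalently using the constant-$\mu$ closed form above with $\mu$ replaced by its minimum $1-c$ --- gives $\rho_k^+(t^+) \leq C\pi^2 k^2 h_{out}(n)^2/(c^2(1-c))\cdot(1+o(1))$ for a small explicit constant $C$ that can be tracked through the computation. Optimizing the denominator, i.e.\ maximizing $c^2(1-c)$ on $(0,1]$, yields the value $4/27$ at $c = 2/3$, precisely the same optimization producing the constant $27/2$ in Example \ref{ex:constant}. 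Combining this with the factor $\tfrac{1}{2}$ from Theorem \ref{theo:eigenhigher} then produces the claimed leading coefficient $27\pi^2/16$.

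The main obstacle is the rigorous spectral comparison with non-constant $\mu(i)$: the truncated matrix is not a pure Jacobi matrix, so one cannot appeal directly to the closed-form eigenvalues of $L$. A clean approach is the test-function minimax argument with the sines above, where the pointwise variability of $\mu$ on the truncation interval is controlled by bounding $\mu$ between $1-c$ and $1$ and absorbing the lower-order error into the $o(1)$ term. An alternative, mirroring the manifold analysis of \cite{B15}, is to pass to the continuous Sturm--Liouville limit $-v'' = (\rho/2)(1 - h_{out}(n)\,x)\,v$ on $[0, c/h_{out}(n)]$ with Dirichlet--Neumann boundary and invoke standard asymptotic eigenvalue formulas; both approaches should produce the same leading constant.
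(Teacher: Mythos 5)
Your overall strategy — reduce to Theorem \ref{theo:eigenhigher} with $\nu\equiv 1$, control the denominator weights $\mu$ by their minimum, and optimize the truncation length — is the right skeleton and matches the paper. But the first step, the choice of $\Sigma$, has a genuine gap that invalidates the argument as written.

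You propose to take $\Sigma = \partial_{out}V_i$ for a piece $V_i$ of a partition realizing $h_{out}(n)$, and you assert that Lemma \ref{lem:d_lowerbound} ``only uses that the cut-set is no worse than the vertex isoperimetric optimum on the side under analysis.'' That is not an accurate reading of the proof. Both cases of the lemma rely on two facts about the globally $h_{out}$-optimal cut-set $\Sigma = \partial_{out}A$: first, the \emph{inequality} $h_{out}\leq |\Sigma_k|/|C^\pm|$ (isoperimetric optimality of $h_{out}$ over \emph{all} sets of at most half the volume), and second, the \emph{equality} $|\Sigma|/|A| = h_{out}$. The second fact is the one that fails for your choice. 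Running case (2) of the lemma with your $\Sigma$ gives $|\Sigma_k|\geq h_{out}\bigl(|V_i|-|\Sigma|\sum_{i=0}^k\nu(i)\bigr)$, and after writing $|V_i|\geq |\Sigma|/h'$ with $h':=|\Sigma|/|V_i|$, you only obtain $|\Sigma_k|\geq |\Sigma|\bigl(h_{out}/h' - h_{out}(n)\sum\nu(i)\bigr)$. Since $h'$ is the cut ratio of a single piece of an $n$-partition, it can lie strictly between $h_{out}$ and $h_{out}(n)$, so the factor $h_{out}/h'$ can be strictly less than $1$ and your claimed weight $\mu(j)=1-h_{out}(n)(j+1)$ does not follow. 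In the worst case the true lower bound is much weaker, and the subsequent spectral estimate breaks.

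The paper avoids this entirely. It keeps $\Sigma$ as the $h_{out}(2)$-optimal cut-set, applies Lemma \ref{lem:d_lowerbound} verbatim to get $|\Sigma_k|\geq |\Sigma|(1 - h_{out}\sum\nu(i))$, and then invokes Proposition \ref{prop:hmonotonicity2} ($h_{out}(n-1)\leq h_{out}(n)$, hence $h_{out}\leq h_{out}(n)$) to \emph{weaken} this lower bound by replacing $h_{out}$ with $h_{out}(n)$; see Remark \ref{rmk:d_lowerbound}. Because $h_{out}$ appears only multiplied by positive terms inside a subtraction, enlarging it preserves validity of the lower bound. This is the entire role the higher Cheeger constant plays: it is substituted in, not derived from, an $n$-partition cut-set. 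You should also note that this means the hypothesis $\nu\equiv 1$ must be interpreted relative to the $h_{out}$-optimal cut-set, not a cut-set from the $n$-partition — a mismatch your version would also create. Your subsequent spectral estimate (bounding $\mu$ by its minimum $1-c$, closed-form eigenvalues, optimizing $c^2(1-c)$ at $c=2/3$) is essentially sound and parallels the paper's use of the Toeplitz domination $\langle g,B^\pm g\rangle\geq\langle g,A^\pm g\rangle$, the eigenvalue formula $4(1-\cos(\cdot))$, and the bound $1-\cos x\leq x^2/2$; the paper's choice of Dirichlet--Dirichlet boundary via the corner entry $4$ of $B^\pm$ versus your Dirichlet--Neumann formula is a cosmetic difference absorbed into the $o(1)$.
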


The proof of Theorem \ref{thm:higherBuser} is found in Example \ref{ex:HigherEigen} and the remaining portion of this section is devoted to developing the properties of $h_{out}(n)$ enough to support the proof of this result.

The concept of the higher Cheeger constant of graphs, as well as the first Cheeger-type and Buser-type inequalities for the higher Cheeger constants (in various forms) have been studied by many authors; see for instance \cite{LOGT,LRT12, Miclo2}. 
We will assume that 
    	$$h_{out}(n)=\max_{i=1,2,\ldots, n}
        	\left \{ \frac{|\partial_{out}V_i|}{|V_i|} \right \} = \frac{|\partial_{out}V_n|}{|V_n|}.$$
	For convenience and without loss of generality, we assume that 
    	$$\frac{|\partial_{out} V_1|}{|V_1|} \leq \frac{|\partial_{out}V_2|}{|V_2|} \leq \cdots \leq \frac{|\partial_{out}V_n|}{|V_n|}.$$
    Further, we may also construct the $V_i$ such that if 
    	$$\frac{|\partial_{out}V_{k-1}|}{|V_{k-1}|}=\frac{|\partial_{out}V_k|}{|V_k|},$$
then $|V_{k-1}| \geq |V_k|.$

To prove bounds on $\lambda_n(G)$ with respect to $h_{out}(n),$ there are two plausible approaches:
	\begin{enumerate}
		\item Prove a monotonicity-type estimate bounding $h_{out}(n)$ from below by $h_{out}(2).$ Then apply these estimates directly to Lemma \ref{lem:d_lowerbound}.
        \item Prove an analog to Lemma \ref{lem:d_lowerbound} for $h_{out}(n)$ in place of $h_{out}(2).$
	\end{enumerate}    
While we take approach 1 for convenience, we mention approach 2, since we would be interested in any work in this direction that might produce better bounds.  The fact that $h_{out}(n)\geq h_{out}(2)$ follows immediately from the following result.

	\begin{proposition}\label{prop:hmonotonicity2} With $h_{out}(n)$ defined as above, for $n \geq 3,$ we have 
	$$h_{out}(n-1) \leq h_{out}(n).$$
	\end{proposition}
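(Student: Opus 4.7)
The plan is to construct an $(n-1)$-partition from an optimal $n$-partition by merging two of its parts, and to check that the max ratio does not increase. Given the convention already established in the excerpt, fix an optimal $n$-partition $V_1,\ldots,V_n$ for $h_{out}(n)$ with the sets ordered so that
\[
\frac{|\partial_{out} V_1|}{|V_1|} \leq \frac{|\partial_{out} V_2|}{|V_2|} \leq \cdots \leq \frac{|\partial_{out} V_n|}{|V_n|} = h_{out}(n).
\]
The natural move is to merge $V_{n-1}$ and $V_n$: set $W := V_{n-1} \cup V_n$ and leave $V_1,\ldots,V_{n-2}$ unchanged. Because $n \geq 3$, the family $V_1,\ldots,V_{n-2},W$ consists of $n-1$ non-empty, pairwise disjoint sets whose union is $V$, so it is an admissible candidate in the definition of $h_{out}(n-1)$.

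Next I would bound $|\partial_{out} W|/|W|$. The key inclusion is $\partial_{out} W \subseteq \partial_{out} V_{n-1} \cup \partial_{out} V_n$: any vertex lying outside $W$ and adjacent to $W$ must be adjacent to either $V_{n-1}$ or $V_n$, and it is outside both of them (since it is outside $W$). Hence
\[
|\partial_{out} W| \leq |\partial_{out} V_{n-1}| + |\partial_{out} V_n|.
\]
Applying the elementary mediant inequality $\tfrac{a+c}{b+d} \leq \max\!\bigl(\tfrac{a}{b},\tfrac{c}{d}\bigr)$ with $a = |\partial_{out} V_{n-1}|$, $b=|V_{n-1}|$, $c = |\partial_{out} V_n|$, $d = |V_n|$ yields
\[
\frac{|\partial_{out} W|}{|W|} \;\leq\; \frac{|\partial_{out} V_{n-1}| + |\partial_{out} V_n|}{|V_{n-1}| + |V_n|} \;\leq\; \frac{|\partial_{out} V_n|}{|V_n|} \;=\; h_{out}(n).
\]

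Finally, for every $i \leq n-2$ the ordering assumption gives $|\partial_{out} V_i|/|V_i| \leq h_{out}(n)$. Combining with the bound on $W$, the maximum ratio over the constructed $(n-1)$-partition is at most $h_{out}(n)$, so $h_{out}(n-1) \leq h_{out}(n)$ by the minimax definition. There is no real obstacle; the only things to watch are that both merged parts be non-empty (automatic) and that $n \geq 3$ so the merge leaves a genuine multi-part partition — exactly the hypothesis stated.
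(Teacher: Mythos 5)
Your proof is correct and uses essentially the same approach as the paper: merge two parts of an optimal $n$-partition, bound the outer boundary of the union by subadditivity, and apply the mediant inequality to show the merged part's ratio does not exceed $h_{out}(n)$. The only cosmetic difference is that the paper merges $V_1$ and $V_2$ (the two smallest ratios) while you merge $V_{n-1}$ and $V_n$ (the two largest); both choices work because the mediant bound gives a ratio at most the larger of the two, which in either case is at most $h_{out}(n)$.
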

\begin{proof}
	Using the notation established in this section, we remind that reader that 
    \begin{equation*}
    	h_{out}(n)=
        \max_{1\leq i \leq n}
        \frac{|\partial_{out}V_i|}{|V_i|}
        =\frac{|\partial_{out}V_n|}{|V_n|}.
    \end{equation*}
Consider the sets $V_1,\dots, V_n$ that optimize $h_{out}(n)$.  We form a collection of $n-1$ sets that will be a candidate to optimize $h_{out}(n-1)$ by merging $V_1$ and $V_2$ to make $V^* = V_1\cup V_2$ and by retaining the other $n-2$ sets.

Observe that $$\frac{|\partial_{out}V^*|}{|V^*|}\leq \frac{|\partial_{out}V_1| + |\partial_{out}V_2|}{|V_1|+|V_2|}\leq \max\left \{\frac{|\partial_{out}V_1|}{|V_1|}, \frac{|\partial_{out}V_2|}{|V_2|}\right \},$$
where the first inequality relies on the fact that $|\partial_{out}(V_1\cup V_2)|\leq |\partial_{out}V_1| + |\partial_{out}V_2|$.   The second inequality uses the rule $\frac{a+b}{c+d}\leq \max\{\frac{a}{c},\frac{b}{d}\}$  when $a,b,c,d > 0$.

Combining this bound with the monotonicity of $\frac{|\partial_{out}V_i|}{|V_i|}$, we find that $$\frac{|\partial_{out}V^*|}{|V^*|}\leq \max_{1\leq i \leq n}
\frac{|\partial_{out}V_i|}{|V_i|}
=\frac{|\partial_{out}V_n|}{|V_n|};$$ that is, the maximum ratio on these $n-1$ sets that partition $V$ is $\frac{|\partial_{out}V_n|}{|V_n|}$.  Because $h_{out}(n-1)$ is the minimum value of the maximum ratio taken over any choice of $n-1$ sets that partition $V$, we find that $$h_{out}(n-1)\leq \frac{|\partial_{out}V_n|}{|V_n|} = h_{out}(n).$$

\end{proof}

\begin{remark}\label{rmk:d_lowerbound} Recall that the terms $h_{out}(n)$ and $\sum_{i=0}^k \nu(i)$ are both positive. Using these facts, the following bound is immediate from combining Proposition \ref{prop:hmonotonicity2} with Lemma \ref{lem:d_lowerbound}. So, with the same notation and assumptions as in Lemma \ref{lem:d_lowerbound}, we have 
	$$|\Sigma_k| \geq |\Sigma| \left (
    	1-h_{out}(n)
        \sum_{i=0}^k \nu(i)
        \right ).$$
\end{remark}

In the next section, we will cite this remark in the analysis of some examples.

\section{Examples of spectral gap bounds using volume growth}

In this section, we use Theorem \ref{thm:lambda_2} to bound the second eigenvalue by the volume growth.  First we obtain several general bounds depending only on the growth function $\nu(k)$.  Second, we use these results to bound $\lambda_2$ for specific graphs where the growth function is known. In each example where a bound on $\lambda_2(G)$ is computed, we compute $B$ from the statement of Theorem \ref{thm:lambda_2}.

\subsection{Examples of volume growth functions}

\begin{example}\label{ex:exponential}
If $\nu(i)$ is exponential, i.e., $\nu(i) = c^i$ for some value $c>1$, then $T$ satisfies \begin{align*}\frac{c^{T+1}-1}{c-1}\leq \frac{1}{h_{out}}\leq \frac{c^{T+2}-1}{c-1}.\end{align*}
 As such, we have 
 \begin{align*}h_{out}\leq \frac{c-1}{c^{T+1}-1}.\end{align*}
 
Note that it is trivial that $\nu(i) = d\cdot(d-1)^{i-1} < d^i$ is a volume growth bound for all $d$-regular graphs.  This bound is achieved by a tree where $|\Sigma|$ is a single vertex.  So we only need to consider the case $c\leq d$.

If $T\geq n\geq 1$, it follows that  
\begin{align*}\sum_{k=n}^T \left (1-h_{out}\sum_{i=0}^k \nu(i)\right ) &= (T-n+1)-\sum_{k=n}^T h_{out} \frac{c^{k+1}-1}{c-1} \\ 
&\geq (T-n+1) - \sum_{k=n}^T\frac{c^{k+1}-1}{c^{T+1}+1}\\
&=(T-n+1)\left (1+\frac{1}{c^{T+1}-1}\right )-\frac{c^{T+2}-c^{n+1}}{(c-1)(c^{T+1}-1)}\,.\end{align*}
We also have that 
\begin{align*}\sum_{k=1}^n\frac{1}{\nu(k)+\nu(k-1)} = \sum_{k=1}^n \frac{1}{(c+1)c^{k-1}} =\frac{1-c^{-n}}{c-\tfrac{1}{c}} = \frac{c-c^{1-n}}{c^2-1}\,.\end{align*}
Combining the previous two equations, we have 
\begin{align*}B\geq \sup_{T\geq n\geq 1} \left ((T-n+1)\left (1+\frac{1}{c^{T+1}-1}\right )-\frac{c^{T+2}-c^{n+1}}{(c-1)(c^{T+1}-1)}\right )\left (\frac{c-c^{1-n}}{c^2-1}\right ).
\end{align*}

Taking $n = 1$, we find \begin{align*}B&\geq \biggl(T+\frac{T}{c^{T+1}-1}-\frac{c^{T+2}-c^{2}}{(c-1)(c^{T+1}-1)}\biggr)\biggl(\frac{c-1}{c^2-1}\biggr) \\ &\geq\biggl(T+\frac{T}{c^{T+1}-1}-\frac{c}{c-1}\biggr)\biggl(\frac{1}{c+1}\biggr).\end{align*}

On the other hand, for any value $n$ satisfying $1\leq n\leq T$, we have that 

\begin{align*}&\sum_{k=n}^T \left (1-h_{out}\sum_{i=0}^k \nu(i)\right ) \leq T\,, \end{align*} 
and, as a result, it follows that 
\begin{align*}
\sum_{k=1}^n\frac{1}{\nu(k)+\nu(k-1)} = \sum_{k=1}^n \frac{1}{(c+1)c^{k-1}} =
\frac{1-c^{-n}}{c-\tfrac{1}{c}}\leq \frac{1}{c-\tfrac{1}{c}} = \frac{c}{c^2-1}\,.
\end{align*} 
So, combining all parts, we see that \begin{align*}\biggl(T+\frac{T}{c^{T+1}-1}-\frac{c}{c-1}\biggr)\biggl(\frac{1}{c+1}\biggr)\leq B\leq T\frac{c}{c^2-1}.\end{align*}

In particular, if $c\geq 1+\ep,$ for a fixed $\ep>0$, then $B = \Theta(T/c)$ and $\lambda_2=O(c/T).$
\end{example}

\begin{example}\label{ex:exponential2} Of particular interest is the case that $\nu(0) = 1$, $\nu(i) = d c^{i-1}$ if $i\geq 1$, where $d$ is the common degree of vertices in the graph and $c>1$.  This is the form of Theorem \ref{thm:vg}. Proceeding in the same way as the previous example, we see that $T$ satisfies 
\begin{align*}1+d\frac{c^{T}-1}{c-1}\leq \frac{1}{h_{out}}\leq 1 + d\frac{c^{T+1}-1}{c-1}.\end{align*}
It follows that 
\begin{align*}h_{out}\leq \frac{c-1}{c-1+d(c^{T}-1)}.\end{align*}
In the case where $T\geq n\geq 1$, we have  
\begin{align*}
\sum_{k=n}^T \left (1-h_{out}\sum_{i=0}^k \nu(i)\right ) &= (T-n+1)-\sum_{k=n}^T h_{out}  \frac{c-1+d(c^{k}-1)}{c-1} \\ 
 & \geq (T-n+1) - \sum_{k=n}^T\frac{c-1+ d(c^{k}-1)}{c-1+d(c^{T}-1)}\\ 
 &=(T-n+1)\left (1+\frac{d+1-c}{c-1+d(c^{T}-1)}\right )\\
 &\hspace{.2in} -\frac{d(c^{T+1}-c^{n})}{(c-1)(c-1+d(c^{T}-1))}.\end{align*} 
 In addition, we find that 
\begin{align*}
\sum_{k=1}^n\frac{1}{\nu(k)+\nu(k-1)} &= \frac{1}{1+d}+\sum_{k=2}^n \frac{1}{d(c+1)c^{k-2}} \\
 &=\frac{1}{1+d}+\frac{1}{d}\cdot\frac{1-c^{1-n}}{c-\tfrac{1}{c}} \\
 &= \frac{1}{1+d}+\frac{1}{d}\cdot\frac{c-c^{2-n}}{c^2-1}.\end{align*}

Combining the previous two equations, we have 
\begin{align*}
B\geq \sup_{T\geq n\geq 1} \left ((T-n+1)\left (1+\frac{d+1-c}{c-1+d(c^{T}-1)}\right )
-\frac{d(c^{T+1}-c^{n})}{(c-1)(c-1+d(c^{T}-1))}\right )\\ 
\cdot \left (\frac{1}{1+d}+\frac{1}{d}\cdot\frac{c-c^{2-n}}{c^2-1}\right ).
\end{align*}

Taking $n = 1,$ we find 
\begin{align*}B &
\geq \left (T\left (1+\frac{d+1-c}{c-1+d(c^{T}-1)}\right )-
\frac{d(c^{T+1}-c)}{(c-1)(c-1+d(c^{T}-1))}\right )\\ 
&\hspace{.2in} \cdot \left (\frac{1}{1+d}+\frac{1}{d}\cdot\frac{c-c}{c^2-1}\right ) \\ 
&\geq \left (T+\frac{T(d+1-c)}{c-1+d(c^{T}-1)}-\frac{c}{c-1}\right )\left(\frac{1}{1+d}\right ).
\end{align*}

On the other hand, if $1\leq n\leq T$, we have 
\begin{align*}
\sum_{k=n}^T \left (1-h_{out}\sum_{i=0}^k \nu(i)\right ) \leq T\,, 
\end{align*}
and

\begin{align*}\sum_{k=1}^n\frac{1}{\nu(k)+\nu(k-1)} = \frac{1}{1+d}+\frac{1}{d}\cdot\frac{c-c^{2-n}}{c^2-1}\leq \frac{1}{1+d}+\frac{1}{d}\cdot\frac{c}{c^2-1}\,.\end{align*} Thus, combining all parts, we see that \begin{align*}\biggl(T+\frac{T(d+1-c)}{c-1+d(c^{T}-1)}-\frac{c}{c-1}\biggr)\biggl(\frac{1}{1+d}\biggr)\leq B\leq T\biggl(\frac{1}{1+d}+\frac{1}{d}\cdot\frac{c}{c^2-1} \biggr).\end{align*}

If $c\geq 1+\ep$ for a fixed value $\ep > 0$, then $B = \Theta(T/d)$ and $\lambda_2=O(d/T)$.
\end{example}

\begin{example}\label{ex:constant}\item If $\nu(i) = 1$ for all $i\geq 0$, then $T$ satisfies $T+1\leq \frac{1}{h_{out}}\leq T+2.$

\begin{align*}
B &= \sup_{n\geq 1}\left (\sum_{k=n}^T\big (1-h_{out}(k+1)\big )\right )
	\left (\sum_{k=1}^n \frac{1}{2}\right ) \\ 
    &= \sup_{n\geq 1}\left ((T-n+1)-h_{out}\left [\binom{T+1}{2}-\binom{n}{2}\right ]\right )
    \left(\frac{n}{2}\right ) \\ 
    &\geq \sup_{n\geq 1}\biggl(T+1-n-\frac{1}{T+1}\frac{T^2+T-n^2+n}{2}\biggr)\cdot\frac{n}{2} \\ &= \frac{T^2}{27}(1\pm o(1)) \\
    &= \frac{1}{27h_{out}^2}(1\pm o(1)).
\end{align*}

Here the supremum for $B$ is achieved when $n$ is roughly equal to  $T/3$.  
It follows that $\lambda_2\leq \frac{27}{2}h_{out}^2(1+o(1))$.  In this case the Cheeger lower bound $\lambda_2 \geq c*h_{out}^2/d$ is tight up to a linear factor of $d$.
Note that this is a case of Theorem \ref{thm:biglevel} in which the Cheeger cut-set is also the largest set.  This follows because for all $i$, $|\Sigma_i|\leq \nu(i)|\Sigma_0| = |\Sigma_0|$, where $|\Sigma_0|$ is by assumption the Cheeger cut-set.
\end{example}

\begin{example}
\label{ex:polynomial}
\item If $b\geq 1$ is a constant so that $\nu(i) = 1+i^b$ for all $i\geq 0,$ then $T$ satisfies 
\begin{align*}\frac{1}{h_{out}} \geq \sum_{i=0}^T (1+i^b)\geq \int_0^T x^b\, dx=\frac{T^{b+1}}{b+1}.
\end{align*}
For the computation of $B,$ we use the inequality 
    \begin{equation}\label{eq:Referee}
        \sum_{i=0}^k \left (1+i^b\right ) \leq 3\cdot \sum_{i=1}^k i^b \leq 3\cdot \frac{(k+1)^{b+1}-1}{b+1}.
    \end{equation}
The first inequality follows from 
    $$\sum_{i=1}^k i^b \geq \sum_{i=1}^k i=\frac{k(k+1)}{2}\geq \frac{k+1}{2}.$$ So then $2\sum_{i=1}^k i^b \geq k+1$ and so 
    $$\sum_{i=0}^k i^b \leq (k+1)+\sum_{i=1}^k i^b \leq 3 \cdot \sum_{i=1}^k i^b.$$
The second inequality in (\ref{eq:Referee}) follows from 
    $$\sum_{i=1}^k i^b\leq \int_1^{k} x^b \, dx=\frac{k^{b+1}-1}{b+1},$$
since $b\geq 1.$

Thus, we have 
\begin{align*}B &= 
\sup_{n\geq 1}\left (\sum_{k=n}^T\left (1-h_{out}\left (\sum_{i=0}^k(1+i^b)\right )\right )\right )
\left (\sum_{k=1}^n \frac{1}{2+k^b+(k-1)^b}\right )\\
&\geq \sup_{n\geq 1}\left (\sum_{k=n}^T\left (1-3h_{out}\left (\frac{k^{b+1}-1}{b+1}\right )\right )\right )
\Theta(1)
\\
&\geq \sup_{n\geq 1}\left((T-n+1)-3h_{out}\left(\sum_{k=n}^T\frac{k^{b+1}-1}{b+1}\right)\right )
\Theta(1)\\
&= \sup_{n\geq 1}\Theta\left ((T-n+1)-3h_{out}\frac{T^{b+2}-n^{b+2}}{(b+1)(b+2)}(1+o(1))\right )\\
&\geq \sup_{n\geq 1}\Theta\left ((T-n+1)-3\cdot \frac{T^{b+2}-n^{b+2}}{T^{b+1}(b+2)}(1+o(1))\right )\\
&= \Theta(T)
\end{align*}

So we conclude that $\lambda_2= O(1/T) = O(h_{out}^{1/b+1})$.

\smallskip
This example represents {\it polynomial} volume growth.  Recall that in the setting of Ollivier curvature, every graph with positive curvature has polynomial volume growth with some positive integer $b$.  But the Buser bound we hoped to achieve is $\lambda_2 = O(h_{out}^2)$.  The reason for the difference may be that Paeng's polynomial volume growth bound is a correct bound for the volume growth around any initial set.  In this section we are only concerned with bounding volume growth around the Cheeger-achieving cut-set.  For that set, a tighter bound may apply.  Our next examples are instances of this phenomenon, where the volume growth is much slower around the Cheeger cut-set than around general vertex sets.

\end{example}

We will now provide an application of Theorem \ref{theo:eigenhigher} to Buser-type inequalities for combinations of higher eigenvalues and the higher Cheeger constants.
\begin{example}\label{ex:HigherEigen}
Assume that $\nu(i)=1$ for all $i \in [T^-, T^+]$ and $h_{out}(n)<1$ for some $n \geq 2.$ Due to the symmetry of this example, we abuse notation slightly to simplify the presentation, defining $B^{\pm}$ to be the $T^{\pm} \times T^{\pm}$ Toeplitz, tridiagonal matrix defined by 
	\begin{equation*}
    	B^{\pm}_{ij}=\begin{cases} 4, & \text{ if }i=j\\
        -2, &\text{ if }|i-j|=1\\ 
        0, &\text{ otherwise}.\end{cases}
    \end{equation*}
Because $B^{\pm}$ differs from $A^{\pm}$ in only the $(T^{\pm},T^{\pm})$ entry, we have that 
	\begin{equation*}
    	\langle g,B^{\pm} g\rangle_{\pm}-\langle g, A^{\pm}g\rangle_{\pm}=
        B^{\pm}_{T^{\pm},T^{\pm}} g(T^{\pm})^2-A^{\pm}_{T^{\pm},T^{\pm}}g(T^{\pm})^2=2g(T^{\pm})^2 \geq 0.
    \end{equation*}
Note that the eigenvalues of the matrix $B^{\pm},$ denote them $\psi_k,$ are given in closed form by
	\begin{equation}\label{eq:ToeEigen}
    	\psi_k=4\left (1-\cos\left ( \frac{k\pi}{T^+-T^-}\right ) \right ),
    \end{equation}
see, for instance, Theorem~2.2 in \cite{KST99}, wherein a new approach was proposed (with extensions to Toeplitz-like matrices), while \cite{Smith85} details the classical treatment. 

Now we combine Equation \ref{eq:ToeEigen} with Theorem \ref{theo:eigenhigher} which implies that 
\begin{equation}\label{eq:higherBound}
	\lambda_k(G)\leq 
    2\cdot\min_{\left \lceil \frac{k}{2}\right\rceil 
    \leq t \leq \min\{T^+,T^-\}}
    \frac{1-\cos \left (\frac{
    \left \lceil \frac{k}{2}\right \rceil}{t+1}\pi \right )}
    {1-h_{out}(n)(t+1)},
\end{equation}
where the denominator follows from Remark \ref{rmk:d_lowerbound}. In particular, the weight $\mu(k)$ from Theorem \ref{theo:eigenhigher} is given by 
	\begin{equation*}
    	\mu(k)=1-h_{out}(n)\sum_{i=0}^k \nu(i)
        =1-h_{out}(n)(k+1).
    \end{equation*}

It remains to minimize the right hand side of Equation \ref{eq:higherBound}. 
We will use the simple bound that if $0 \leq x \leq \pi$, with $\frac{2}{\pi^2}x^2 \leq 1-\cos(x) \leq \frac{1}{2}x^2.$

From Equation \ref{eq:higherBound}, we obtain 
\begin{equation}
\lambda_k(G)\leq 
    2\cdot\min_{\left \lceil \frac{k}{2}\right\rceil 
    \leq t \leq \min\{T^+,T^-\}}
    \frac{
    \left \lceil \frac{k}{2}\right \rceil^2\frac{\pi^2}{2}}
    {[1-h_{out}(n)(t+1)](t+1)^2}.
\end{equation}

Observe that in this step of our estimate, we use a bound that is tight up to a constant factor $\pi^2/4.$ One might be tempted to use a better approximation for $\cos(x)$, but this factor gives an upper bound on the potential improvement from that method.

Elementary calculus reveals that the minimum is achieved when $(t+1) = \frac{2}{3h_{out}(n)}.$ Of course this may be not an integer: we will set
\begin{align*}
t+1 = \left \lceil \frac{2}{3h_{out}(n)}\right  \rceil \text{ if }\frac{k}{2}\leq\frac{2}{3h_{out}(n)}\leq \min\{T^+,T^-\}.
\end{align*}

In this case, we find that \begin{align*}\lambda_k(G)\leq
    2\cdot \frac{ \left \lceil \frac{k}{2}\right \rceil^2\frac{\pi^2}{2}}
    {[1-h_{out}(n)\lceil \frac{2}{3h_{out}(n)} \rceil ](\lceil \frac{2}{3h_{out}(n)} \rceil)^2} = k^2h_{out}(n)^2 \parens{\frac{27\pi^2}{16}+o(1)}.\end{align*}

For this problem we have $1/h_{out}(n) < 2+\min\{T^+,T^-\}$, and so $\frac{2}{3h_{out}(n)}\leq \min\{T^+,T^-\}$ as long as $\min\{T^+,T^-\}\geq 4$

We will not analyze the case that $\frac{2}{3h_{out}(n)}<k/2$ or that $\min\{T^+,T^-\}<4$.  It is easy to check that both cases give (trivial) bounds of the form $\lambda_k \leq C$ for a universal constant $C.$
\end{example}

\subsection{Examples of specific graphs}
We will now test our methods on several concrete examples. For these examples, information about the spectrum is already known, allowing us to compare the results.

\begin{example}[Hypercube]
The hypercube $\Omega_d$ is commonly expressed as the graph with vertex set $\{0,1\}^d$ and $x\sim y$ if and only if $x$ and $y$ disagree in exactly one coordinate.  With this notation, we define the $k$-slice $A_k\subset V$ to be the set of vertices that are $1$ in exactly $k$ coordinates.  It is clear that $|A_k| = \binom{d}{k}$.

It is known that $h_{out}$ is achieved by the $\lfloor d/2\rfloor$-slice $\Sigma$, with $h_{out} = \Theta(1/\sqrt{d})$ \cite{Har66}.  With this choice of $\Sigma$, we see that $\dist^{-1}(i) = A_{\lfloor d/2\rfloor+i}$, and 
$$|\dist^{-1}(i)| = \binom{d}{\lfloor d/2\rfloor+i}\leq  \binom{d}{\lfloor d/2\rfloor} = |\Sigma|.$$
As such, we may set $\nu(i) = 1$, and we have 
$$T = \left \lfloor\frac{1}{h_{out}}-1\right \rfloor = \Theta(\sqrt{d}).$$

By the results of Example \ref{ex:constant},  $\lambda_2 \leq \tfrac{27}{2}h_{out}^2(1+o(1))$, thus $\lambda_2 = O(1/d)$.  It is well-known that the actual value of $\lambda_2$ is indeed $\Theta(1/d)$.
\end{example}

\begin{example}[Discrete torus]
If $C_n$ is the $n$-cycle for $n\geq 3$, the discrete torus $C_n^d$ is the $2d$-regular graph $C_n\square C_n \square \cdots \square C_n$.  
It is understood that $h_{out}$ is achieved by the ball $B(x,\lceil\tfrac{dn}{4}\rceil-1)$ with $\Sigma = S(x,\lceil\tfrac{dn}{4}\rceil)$, where $x$ is an arbitrary (fixed) vertex \cite{BoL06}.  The level sets are $\dist^{-1}_\Sigma(i) = S(x,i+\lceil\tfrac{dn}{4}\rceil)$ with $|\dist^{-1}_\Sigma(i)|\leq |\Sigma|.$  We will give a brief argument that $\tfrac{2}{nd}(1+o(1) < h_{out} <\tfrac{4}{n}(1-o(1))$.

First note that $|\Sigma|< 2n^{d-1}$, as the latter is achieved by the boundary of the candidate cut-set bounded by two parallel $d-1$-planes seperated by a distance $\lfloor n/2\rfloor$.  It follows that $h_{out} < n^{d-1}/(\tfrac{1}{2}n^d(1+o(1)) = 4/n(1+o(1)$.

Next, consider the set $A\subset C_n^{d-1}$ defined to contain those $a$ for which there is an element of $\Sigma$ whose first $d-1$ entries are $a$.  Let $\overline{x}$ be the first $d-1$ entries of $x$, $A = \bigcup_{k=0}^{\lfloor n/2\rfloor} S(\overline{x},\lceil\tfrac{dn}{4}-k\rceil)$.  Inductively, we know that $A$ contains the disjoint union of the $n/2-O(1)$ largest shells around $\overline{x}$ in $C_n^{d-1}$; as there are $nd/2 + O(1)$ shells in total and we take the largest fraction $1/d - o(1)$ to form $A$, $|A|\geq \parens{\tfrac{1}{d}-o(1)}|C_n^{d-1}| = (n^{d-1}/d)(1-o(1))$.  Clearly $|A| \leq |\Sigma|$, it follows that $h_{out} > (n^{d-1}/d)/(\tfrac{1}{2}n^d)(1-o(1)) = \tfrac{2}{nd}(1-o(1))$.

And so we have determined that $h_{out} = \tfrac{1}{n}$ is tight within a factor linear in $d$.  Proceeding similarly to the hypercube, we may use $\nu(i) = 1$ as in Example \ref{ex:constant} to see that $\lambda_2\leq \tfrac{27}{2}h_{out}^2(1+o(1))$, thus 
$$\lambda_2 \leq \Theta_d(\tfrac{1}{n^2})\,.$$  It is well-known that the actual value is $\lambda_2 = \Theta(\tfrac{1}{n^2})$, so our estimate is tight up to a factor depending on $d$.
\end{example}

\bibliographystyle{plainnat}
\bibliography{bibliography}

\end{document}